\theoremstyle{plain}
\newtheorem{thm}{Theorem}[section]
\newtheorem{prp}{Proposition}[section]
\newtheorem{lem}{Lemma}[section]
\newtheorem{cor}{Corollary}[section]
\theoremstyle{definition}
\theoremstyle{remark}
\newtheorem{rmk}{Remark}[section]
\numberwithin{equation}{section}
\newcommand{\Z}{\mathbb{Z}}
\newcommand{\R}{\mathbb{R}}
\newcommand{\C}{\mathbb{C}}
\newcommand{\ene}[1]{{\mathcal{E}}_{#1}}
\newcommand{\cc}[1]{\overline{#1}}
\newcommand{\op}[1]{\mathcal{#1}}
\newcommand{\diag}{{\mathrm{diag}\,}}
\newcommand{\pa}{\partial}
\newcommand{\eps}{\varepsilon}
\newcommand{\jb}[1]{\langle #1 \rangle}
\newcommand{\Jb}[1]{\bigl\langle #1 \bigr\rangle}
\newcommand{\jbf}[1]{\left\langle #1 \right\rangle}
\newcommand{\sh}[1]{{#1}^{\sharp}}
\newcommand{\tm}{\tilde{m}}
\DeclareMathOperator{\realpart}{\rm Re}
\DeclareMathOperator{\imagpart}{\rm Im}
\newcommand{\dis}{\displaystyle}
\newcommand{\sumc}{\mathop{{\;\,\sum}'}}
\newcommand{\Co}[3]{C_{#1,#2,#3}}
\begin{document}
\title{
 Small data global existence for a class of \\
quadratic derivative nonlinear Schr\"odinger systems \\
in two space dimensions
 }

\author{
           Daisuke Sakoda\thanks{
              Department of Mathematics, Graduate School of Science, 
              Osaka University. 
              1-1 Machikaneyama-cho, Toyonaka, Osaka 560-0043, Japan. 
              (E-mail: {\tt d-sakoda@cr.math.sci.osaka-u.ac.jp})             }
           \and  
          Hideaki Sunagawa \thanks{
              Department of Mathematics, Graduate School of Science, 
              Osaka University. 
              1-1 Machikaneyama-cho, Toyonaka, Osaka 560-0043, Japan. 
              (E-mail: {\tt sunagawa@math.sci.osaka-u.ac.jp})
             }
}

\date{\today }   
\maketitle

\noindent{\bf Abstract:}\ 
This paper provides a small data global existence result for a class of quadratic 
derivative nonlinear Schr\"odinger systems in two space dimensions. 
This is an extension of the previous results by Li 
[Discrete Contin. Dyn. Syst., {\bf 32} (2012), 4265--4285] 
and Ikeda--Katayama--Sunagawa [Ann. H. Poincar\'e {\bf 16} (2015), 535--567].\\

\noindent{\bf Key Words:}\ 
Derivative nonlinear Schr\"odinger systems; null structure; gauge invariance; long-range interaction.\\

\noindent{\bf 2010 Mathematics Subject Classification:}\ 
35Q55, 35B40

\section{Introduction}  \label{sec_intro}
This paper deals with the global Cauchy problem for systems of nonlinear 
Schr\"odinger equations in two space dimensions. We consider systems of 
the type
\begin{equation}\label{NLS}
\left\{
\begin{array}{l}
\op{L}_{m_{j}} u_{j} =F_j(u, \pa_x u), \quad t>0,\, x\in\R^2, \, j=1,\dots,N, \\
u_j(0,x) = \varphi_j(x), \qquad x\in\R^2,\,j=1,\dots,N,
\end{array}
\right.
\end{equation}
where $\op{L}_{m_j} = i \pa_t + \frac{1}{2m_j}\Delta$, $i=\sqrt{-1}$, 
$\pa_t=\pa/\pa t$, $m_j \in \R \setminus \{ 0\}$ and 
$\Delta=\pa_{x_1}^2+\pa_{x_2}^2$ with $\pa_{x_a}=\pa/\pa x_a$ for 
$x=(x_a)_{a=1,\,2}\in\R^2$. 
$u=(u_j(t,\,x))_{1\leq j \leq N}$ is a $\C^N$-valued unknown function, and 
the nonlinear term $F=(F_j)_{1\leq j \leq N}$ is always assumed to be a 
quadratic homogeneous polynomial in $(u,\,\pa_x u,\,\cc{u},\,\cc{\pa_x u})$ 
where $\pa_x u=(\pa_{x_a} u_j(t,x))_{a=1,\,2;1\leq j \leq N}$. 
$\varphi=(\varphi_j(x))_{1\leq j \leq N}$ is a given $\C^N$-valued 
function which is assumed to be small in a suitable function space.  

Before going into the detail, let us recall some of the known results briefly. 
From the perturbative point of view, quadratic nonlinear Schr\"odinger 
equations in two space dimensions are of special interest because 
the best possible decay in $L_x^2$ of general quadratic nonlinear terms is 
$O(t^{-1})$, so the quadratic nonlinearity must be regarded as a 
long-range perturbation. In general, solutions develop singularities in 
finite time even if the initial data are sufficiently small and smooth 
(see e.g., Ikeda--Wakasugi \cite{IW} for an example on small data blow-up). 
Therefore we need some structural restrictions on the nonlinearity to obtain 
global solutions even for small data. 
Note that the critical exponent is expected to be $p=1+2/d$ 
when we consider the Schr\"odinger equations with the nonlinearity of degree 
$p$ in $d$-dimensional space. 
Roughly speaking, this exponent comes from the condition for 
convergence of the integral 
\[
 \int_1^{\infty} \frac{dt}{t^{\frac{d}{2}(p-1)}}.
\]
For the single equation case (i.e., $N=1$), small data global existence 
results for 2D quadratic NLS 
have been obtained by several papers (\cite{BHN}, \cite{Del}, \cite{GMS3}, 
\cite{GMS}, \cite{HN}, etc.), while less is known for the case of 
$N\ge 2$. An interesting feature in the system case is that 
large-time behavior of solutions is affected by not only the structure of 
the nonlinearity but also the combination of masses $(m_j)_{1\le j\le N}$, 
as discussed in recent works 
(\cite{ColCol}, \cite{HLN1}, \cite{HLN2}, \cite{HLO}, \cite{Hira}, \cite{IKS}, 
\cite{IKO}, \cite{KLS}, \cite{Kim}, \cite{Li}, \cite{LS1}, \cite{LS2}, 
\cite{OgaUri}, \cite{OzaSuna}, \cite{Uri} etc.).
A typical example of NLS system appearing in various physical settings is 
\begin{align}
\left\{
\begin{array}{l}
\op{L}_{m_{1}} u_{1} = \lambda_1 \cc{u_1} u_2,\\
\op{L}_{m_{2}} u_{2} = \lambda_2 {u_1}^2.  
\end{array}
\right.  \label{twoNLS}
\end{align}
In \cite{HLN1}, Hayashi--Li--Naumkin studied the Cauchy problem for 
this two-component system in detail. 
Roughly speaking, they proved small data global existence and 
time decay of the solution for \eqref{twoNLS} under the conditions 
\begin{align}
m_2=2m_1
\label{mass_res2}
\end{align}
and
\begin{align}
\realpart(\lambda_1\lambda_2)>0, \quad 
\imagpart(\lambda_1\lambda_2)=0.
 \label{structure2}
\end{align}
(A closely related result on quadratic nonlinear Klein-Gordon systems 
in $\R^2$ can be found in \cite{KawaSuna2}; 
see also \cite{DFX}, \cite{KOS}.) 
Soon later this result was generalized by Li~\cite{Li} to more general 
systems, but it should be noted that the approach of \cite{Li} 
are available only for the case where the nonlinear term does not 
contain the derivatives of $u$ (i.e., $F_j=F_j(u)$) because the presence of 
derivatives in the nonlinearity causes a derivative loss in general. 
In \cite{IKS}, Ikeda--Katayama--Sunagawa studied a derivative nonlinear 
case and found a kind of {\em null structure} in it 
(cf. \cite{Kla}, \cite{Chr}, \cite{Hor}). 
To be more precise, they considered the three-component system with the 
nonlinearity given by 
\begin{align}
\left\{\begin{array}{l}
 \dis{F_1(u,\pa_x  u) =
  \sum_{|\alpha|,|\beta|\le 1} \Co{1}{\alpha}{\beta}  \, 
  (\overline{\pa^{\alpha} u_2})  (\pa^{\beta} u_3),
 }\\[7mm]
 \dis{F_2(u,\pa_x u) =
  \sum_{|\alpha|,|\beta|\le 1} \Co{2}{\alpha}{\beta}  \, 
   (\pa^{\alpha} u_3) (\overline{\pa^{\beta} u_1}),  
 }\\[7mm]
 \dis{F_3(u,\pa_x u) =
  \sum_{|\alpha|,|\beta|\le 1} \Co{3}{\alpha}{\beta}  \, 
  (\pa^{\alpha} u_1) (\pa^{\beta} u_2)
 }
\end{array}\right.
 \label{Nonlin}
\end{align}
with complex constants $\Co{k}{\alpha}{\beta}$, and obtained 
small data global existence and scattering result 
under the conditions 
\begin{align}
m_1+m_2=m_3
\label{mass_res3}
\end{align} 
and 
\begin{align}
\Lambda_1(\xi)=\Lambda_2(\xi)=\Lambda_3(\xi)=0,
\quad \xi \in \R^2, 
 \label{null_con3}
\end{align}
where 
\begin{align*}
\left\{\begin{array}{l}
 \dis{\Lambda_1(\xi) =
 \sum_{|\alpha|,|\beta|\le 1} \Co{1}{\alpha}{\beta}  \, 
 (\overline{im_2 \xi})^{\alpha}\, (im_3\xi)^{\beta},} 
 \\[7mm]
 \dis{\Lambda_2(\xi) =
 \sum_{|\alpha|,|\beta|\le 1} \Co{2}{\alpha}{\beta}  \, 
 (im_3 \xi)^{\alpha} \, (\overline{im_1\xi})^{\beta},}
 \\[7mm]
 \dis{\Lambda_3(\xi) =
 \sum_{|\alpha|,|\beta|\le 1} \Co{3}{\alpha}{\beta}  \, 
 (im_1\xi)^{\alpha} \,  (im_2\xi)^{\beta}.}
\end{array}\right.
\end{align*}
We refer the readers to Section~4 of \cite{IKS} for a characterization of 
the nonlinearity satisfying \eqref{null_con3} in terms of 
special quadratic forms called {\em{the null gauge forms}} 
and {\em{the strong null forms}}. 
We note that the two-component system \eqref{twoNLS} can be viewed as a 
degenerate case of the three-component system with 
\[
F_1=\lambda_1\cc{u_2}u_3,\ \ F_2=\lambda_2u_3\cc{u_1}, \ \ F_3=\lambda_3u_1u_2,
\] 
and the condition \eqref{mass_res2} for \eqref{twoNLS} can be 
interpreted as \eqref{mass_res3} for this extended system. 
However, this system fails to satisfy \eqref{null_con3} 
unless $\lambda_1=\lambda_2=\lambda_3=0$, so 
the result of \cite{IKS} does not cover that of \cite{Li}. 

The aim of the present paper is to extend  and unify the results 
of \cite{Li} and \cite{IKS}. More precisely, we will introduce a new 
structural condition on $(F_j)_{1\le j \le N}$ and $(m_j)_{1\le j\le N}$ 
under which the small data solution exists globally in time and decays at the 
rate $O(t^{-1})$ as $t\to \infty$ in $L^{\infty}$. 
Another novelty of our result is that it is applicable to the system 
introduced by Colin--Colin \cite{ColCol} (see \eqref{CC_model} below), 
which has not been covered in the previous works.

\section{Main Results}  \label{sec_result}
In the subsequent sections, we will use the following notations: 
We set $I_N=\{1,\ldots, N\}$ and 
$\sh{I}_N =\{1,\ldots, N, N+1,\ldots, 2N \}$. 
For $z =(z_j)_{j\in I_N}\in \C^N$, we write 
\[
 \sh{z} =(\sh{z}_k)_{k \in \sh{I}_N}
 :=(z_1,\ldots,z_N, \overline{z_1},\ldots, \overline{z_N})
\in \C^{2N}.
\]
Then general quadratic nonlinear term $F=(F_j)_{j \in I_N}$ can be written as 
\begin{align}\label{nonlin}
 F_j(u,\pa_x u)
 =\sum_{|\alpha|, |\beta|\le 1}\sum_{k, l \in \sh{I}_N}
  C_{j, k, l}^{\alpha,\beta}  (\pa_x^{\alpha} \sh{u}_{k}) 
 (\pa_x^{\beta} \sh{u}_{l}) 
\end{align}
with suitable $C_{j, k, l}^{\alpha,\beta}  \in \C$. 
With this expression of $F$, we define 
$p=(p_j(\xi;Y))_{j \in I_N}:\R^2 \times \C^N \to \C^N$ by  
\begin{align*}
 p_j(\xi; Y)
 :=
 \sum_{|\alpha|, |\beta|\le 1}\sum_{k, l \in \sh{I}_N}
 C_{j, k, l}^{\alpha, \beta} (i\tm_{k} \xi)^{\alpha}
 (i\tm_{l} \xi)^{\beta}  \sh{Y}_{k} \sh{Y}_{l} 
\end{align*}
for $\xi =(\xi_1,\xi_2)\in \R^2$ and $Y=(Y_j)_{j \in I_N} \in \C^N$, 
where 
\[
 \tm_k=\left\{\begin{array}{cl}
 m_k& (k=1,\ldots, N),\\[4mm]
 -m_{(k-N)} & (k = N+1,\ldots, 2N).
 \end{array}\right.
\]
In what follows, we denote by $\jb{\cdot, \cdot}_{\C^N}$ the standard scalar 
product in $\C^N$, i.e., 
$$
 \jb{z,w}_{\C^N}=\sum_{j=1}^{N}  z_j \overline{w_j}
$$
for $z=(z_j)_{j \in I_N}$ and $w=(w_j)_{j \in I_N} \in \C^N$. 
For $s, \sigma \in \Z_+:=\{0,1,2,\ldots\}$, 
we denote by $H^s$ the $L^2$-based Sobolev space of order $s$, 
and the weighted Sobolev space $H^{s,\sigma}$ is defined by 
$\{\phi \in L^2\, |\, \jb{x}^{\sigma} \phi \in H^s \}$ equipped with 
the norm $\|\phi\|_{H^{s,\sigma}}=\|\jb{\,\cdot\,}^{\sigma} \phi\|_{H^s}$, 
where $\jb{x}=\sqrt{1+|x|^2}$. 

The main result is as follows:

\begin{thm} \label{thm_sdge}
Assume the following two conditions {\rm ({\bf a})} and {\rm ({\bf b})} are 
satisfied:
\begin{description}
 \item[{\rm ({\bf a})}]
For each $j \in I_N$ and $k, l \in \sh{I}_N$,
\[
m_j \neq \tilde{m}_{k} + \tilde{m}_{l}\  
\text{implies} \  
C_{j,k,l}^{\alpha,\beta}=0 \ \text{ for}\ 
\alpha, \beta\in \Z_+^2 \ \text{with}\ |\alpha|,|\beta|\leq 1.
\]
 \item[{\rm ({\bf b})}]
There exists an $N\times N$ positive Hermitian matrix $H$ such that
\[\imagpart\langle p(\xi;Y),HY\rangle_{\C^N}=0\]
for $(\xi,Y)\in\R^2\times\C^N$.
\end{description}
Let $\varphi \in \bigcap_{k=0}^5 H^{11-k, k}(\R^2)$ 
and suppose that  $\eps := \sum_{k=0}^5 \|\varphi \|_{H^{11-k,k}(\R^2)}$ 
is sufficiently small. 
Then \eqref{NLS} admits a unique global solution 
$u\in C([0,\infty);\bigcap_{k=0}^5 H^{11-k, k}(\R^2))$. 
Moreover we have
\begin{equation}
\|u(t,\cdot)\|_{L^\infty(\R^2)}\leq \frac{C\eps}{1+t},\qquad \|u(t,\cdot)\|_{L^2(\R^2)}
\leq C\eps
\end{equation}
for $t\geq 0$, where $C$ is a positive constant not depending on $\eps$.
\end{thm}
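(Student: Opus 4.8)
\medskip
\noindent\textbf{Strategy of the proof.}
The plan is to run a continuity/bootstrap argument combining the $H$-weighted energy method with the factorization of the free Schr\"odinger evolutions $\mathcal{U}_j(t)=\exp\bigl(\tfrac{it}{2m_j}\Delta\bigr)$. I will use the vector fields $\mathcal{J}_j=x+\tfrac{it}{m_j}\nabla=\mathcal{U}_j(t)\,x\,\mathcal{U}_j(-t)$, which commute with $\op{L}_{m_j}$, and for a multi-index $a$ write $\mathcal{J}^a$ for the corresponding product of components of the $\mathcal{J}_j$'s acting componentwise on $u$. Local existence and uniqueness of $u\in C([0,T);\bigcap_{k=0}^{5}H^{11-k,k})$ is obtained by the usual energy argument for the quantity $\sum_{|a|+|b|\le 11,\,|a|\le 5}\|\pa_x^{b}\mathcal{J}^{a}u(t)\|_{L^2}$; already here condition {\rm ({\bf b})} is needed, since it is exactly the $H$-weighted pairing $\jb{\,\cdot\,,H\,\cdot\,}_{L^2}$ that makes the otherwise derivative-losing top-order term cancel (see the last step). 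Setting
\[
 M(T):=\sup_{0\le t<T}\Bigl[(1+t)\|u(t)\|_{L^\infty}+\|u(t)\|_{L^2}+(1+t)^{-\delta}\!\sum_{\substack{|a|+|b|\le 11\\ |a|\le 5}}\!\|\pa_x^{b}\mathcal{J}^{a}u(t)\|_{L^2}\Bigr]
\]
for a small fixed $\delta>0$, it suffices to establish the a priori bound $M(T)\le C_0\eps+C_1M(T)^2$; then $M(T)\le 2C_0\eps$ for $\eps$ small, and the two estimates in the theorem follow from the first two terms of $M$.

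\medskip
\noindent\textbf{The profile equation.}
The first real step is to extract the asymptotic profile. Using the factorization $\mathcal{U}_j(t)=\mathcal{M}_j(t)\,\mathcal{D}_j(t)\,\mathcal{F}\,\mathcal{M}_j(t)$, where $\mathcal{M}_j(t)$ denotes multiplication by $e^{im_j|x|^2/(2t)}$ and $\mathcal{D}_j(t)$ the associated unitary dilation, I introduce a suitably rescaled profile $v_j(t,\xi)$ so that $u_j(t,x)\approx\tfrac{1}{it}\,e^{im_j|x|^2/(2t)}\,v_j(t,x/t)$, with $\|v_j(t)\|_{L^2}$ comparable to $\|u_j(t)\|_{L^2}$ uniformly in $t$. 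Plugging this ansatz into \eqref{nonlin}, noting that to leading order $\pa_{x_a}$ acting on a factor $\sh{u}_k$ produces $\tfrac{i\tm_k x_a}{t}$ (the derivative hitting the quadratic phase), and using the mass condition {\rm ({\bf a})} to discard the residual phases $e^{i(\tm_k+\tm_l-m_j)|x|^2/(2t)}$ that would otherwise leave those terms genuinely oscillatory, one is led to
\[
 i\pa_t v_j(t,\xi)=\frac{1}{t}\,p_j\bigl(\xi;v(t,\xi)\bigr)+R_j(t,\xi),\qquad j\in I_N,\ t\ge1,
\]
where $R_j$ collects the contributions of derivatives hitting amplitudes rather than phases, the error in $\mathcal{F}\mathcal{M}_j(t)\approx\mathcal{F}$ (controlled by $\|\mathcal{J}_j u_j\|$), and the non-resonant terms (which are absent by {\rm ({\bf a})}). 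Using the bootstrap hypotheses and the weighted Sobolev regularity of $\varphi$, one then shows $\|R_j(t)\|_{L^2}+\|R_j(t)\|_{L^\infty}\le C\,M(T)^2(1+t)^{-1-\gamma}$ for some $\gamma>0$.

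\medskip
\noindent\textbf{Closing the $L^2$ and $L^\infty$ bounds via {\rm ({\bf b})}.}
Since $H$ is positive Hermitian, $\|z\|_H:=\jb{z,Hz}_{\C^N}^{1/2}$ is a norm on $\C^N$ equivalent to the Euclidean one. From the profile equation,
\begin{align*}
 \pa_t\jb{v(t,\xi),H v(t,\xi)}_{\C^N}
 &=\frac{2}{t}\,\imagpart\jb{p(\xi;v(t,\xi)),H v(t,\xi)}_{\C^N}\\
 &\quad+2\,\realpart\jb{-iR(t,\xi),H v(t,\xi)}_{\C^N},
\end{align*}
and the first term on the right vanishes for every $\xi$ by condition {\rm ({\bf b})}. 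Integrating over $t$ and using the bound on $R$ gives $\|v(t,\xi)\|_H\le\|v(1,\xi)\|_H+C\,M(T)^2$ pointwise in $\xi$, hence $\|v(t)\|_{L^\infty}\le C\eps+C\,M(T)^2$; integrating instead the corresponding identity for $\tfrac{d}{dt}\int_{\R^2}\jb{v,H v}_{\C^N}\,d\xi$ (the leading term again dropping by {\rm ({\bf b})}) gives $\|v(t)\|_{L^2}\le C\eps+C\,M(T)^2$. Since $\|u_j(t)\|_{L^2}$ is comparable to $\|v_j(t)\|_{L^2}$, and since the factorization also yields the sharp decay estimate $\|u_j(t)\|_{L^\infty}\lesssim(1+t)^{-1}\|v_j(t)\|_{L^\infty}+(1+t)^{-1-\gamma'}\sum_{|a|+|b|\le11,\,|a|\le5}\|\pa_x^{b}\mathcal{J}^{a}u(t)\|_{L^2}$ for some $\gamma'>0$, we conclude $(1+t)\|u(t)\|_{L^\infty}+\|u(t)\|_{L^2}\le C\eps+C\,M(T)^2$.

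\medskip
\noindent\textbf{Closing the higher-order bound, and the main obstacle.}
For $\mathcal{D}=\pa_x^{b}\mathcal{J}^{a}$ with $|a|+|b|\le11$, $|a|\le5$, apply $\mathcal{D}$ to \eqref{NLS} and compute $\tfrac{d}{dt}\jb{\mathcal{D}u,H\mathcal{D}u}_{L^2}=2\,\imagpart\jb{\mathcal{D}F,H\mathcal{D}u}_{L^2}$. Expanding $\mathcal{D}F$ by the Leibniz rule and integrating by parts, the only potentially derivative-losing contribution is the one in which all the relevant differentiations fall on quadratic phases; after invoking {\rm ({\bf a})} so that the phases match, this contribution takes the form $\int_{\R^2}\rho(t,x)\,\imagpart\jb{p(x/t;u(t,x)),H u(t,x)}_{\C^N}\,dx$ with $\rho\ge0$, hence vanishes by {\rm ({\bf b})}. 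Every remaining term is genuinely of lower order and, via H\"older's and Sobolev's inequalities together with the decay $\|u(t)\|_{L^\infty}\lesssim(1+t)^{-1}$ established above, is bounded by $C(1+t)^{-1}\bigl(\sum_{|a'|+|b'|\le11,\,|a'|\le5}\|\pa_x^{b'}\mathcal{J}^{a'}u(t)\|_{L^2}\bigr)^2$; Gronwall's inequality then gives a bound $C\eps(1+t)^{CM(T)}$, which is $\le C\eps(1+t)^{\delta}$ once $M(T)$ is small. Collecting the steps yields $M(T)\le C_0\eps+C_1M(T)^2$, closing the bootstrap, and a standard continuity argument completes the proof. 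The two points I expect to be genuinely hard are: (i) establishing the profile equation with a remainder that is \emph{integrable in time} --- this is precisely what dictates the choice of the space $\bigcap_{k=0}^{5}H^{11-k,k}$, the extra derivatives and weights being consumed in estimating the factorization errors and the loss caused by the derivatives in the nonlinearity; and (ii) verifying that {\rm ({\bf b})} really does annihilate the top-order term of the $H$-weighted energy, i.e.\ that the ``bad'' $x/t$-parts of the derivatives in $F$ reassemble exactly into the symbol $p$ paired with $H\mathcal{D}u$. The rest is lengthy but routine.
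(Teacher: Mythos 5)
There is a genuine gap at the heart of your argument, namely the claim that condition ({\bf b}) cancels the derivative--losing top--order term in $\tfrac{d}{dt}\jb{\mathcal{D}u,H\mathcal{D}u}_{L^2}$ for $\mathcal{D}=\pa_x^{b}\op{J}^{a}$. After Leibniz, the dangerous term is $\sum g_{jk,a}\,\pa_{x_a}\sh{(\mathcal{D}u_k)}$ paired with $H\mathcal{D}u_j$: a trilinear expression with \emph{two} top--order factors $\mathcal{D}u$ and \emph{one} low--order coefficient $g\sim\pa_x^{\le 1}u$. This is not of the form $\imagpart\jb{p(x/t;u),Hu}$ (which is cubic in the low--order $u$ and harmless), and the identity $\imagpart\jb{p(\xi;Y),HY}_{\C^N}=0$ for all $Y$ only kills the fully symmetrized polarization of the associated trilinear form, not the particular polarized pieces produced by Leibniz (which carry the weights $(\tilde m_k/m_j)^{|\beta|}$ and put $\mathcal{D}$ on one factor at a time). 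Moreover the substitution $\pa_{x_a}\mathcal{D}u_k\mapsto \tfrac{i\tilde m_k x_a}{t}\mathcal{D}u_k$ leaves the error $-\tfrac{i\tilde m_k}{t}\op{J}_{\tilde m_k,a}\mathcal{D}u_k$, which at top order $|a|+|b|=11$ lies outside the norms collected in $M(T)$. In the paper, condition ({\bf b}) plays \emph{no} role in the $L^2$ energy step (nor in local existence, contrary to your remark): the derivative loss is absorbed by the smoothing operators $\op{S}_{\pm}$ built from Hilbert transforms (Proposition~\ref{lem:smooth3}, Lemmas~\ref{smooth1}--\ref{smooth2}), whose positive commutator term $\tfrac{\kappa}{|m|\jb{t}}\|w_a\op{S}|\pa_{x_a}|^{1/2}v\|_{L^2}^2$ dominates the first--order terms once the coefficients are small in a weighted $W^{2,\infty}$ sense; the resulting $L^2$ norms are then allowed to grow like $t^{C\eps^{1/3}}$. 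Condition ({\bf b}) is used exactly once, for the pointwise bound on the undifferentiated profile $A(t,\xi)$.

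A second, related gap is that your bootstrap norm $M(T)$ cannot close. The energy step requires coefficient bounds of the type $\|\op{J}_{m_k}^{\gamma}u_k\|_{W^{8-|\gamma|,\infty}}\lesssim \eps^{2/3}\,t^{-1+C|\gamma|\eps^{1/3}}$ for $|\gamma|\le 3$; the crucial $t^{-1}$ decay cannot be recovered from the $L^2$ quantities in $M(T)$ by Sobolev embedding. The paper derives it from weighted pointwise bounds $|\pa_\xi^{\gamma}A(t,\xi)|\lesssim \eps\,\jb{\xi}^{-8+|\gamma|}t^{C|\gamma|\eps^{1/3}}$, obtained by differentiating the profile equation in $\xi$ --- an operation that destroys the structure ({\bf b}), so monotonicity is replaced by a Gronwall argument whose exponent is small because $\eps$ is. This component (advertised as the new ingredient of the proof) is absent from your scheme, and without it neither the coefficients in the energy inequality nor the remainder $R$ in the profile equation can be estimated.
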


\begin{rmk}\label{rmk1}
Analogous result for 1D cubic case  has been obtained in the previous 
work by Li--Sunagawa \cite{LS1}. 
Remember that 1D cubic case is another critical situation, 
that is, $3=(1+2/d)|_{d=1}$. 
However, we need several modifications to prove Theorem~\ref{thm_sdge} 
because the approach of \cite{LS1} 
relies heavily on one-dimensional nature.
Another remark concerning this point is 
that the condition ({\bf b}) above can be replaced by the following 
apparently weaker one: 
\begin{description}
 \item[{\rm ({\bf b}$'$)}]
There exists an $N\times N$ positive Hermitian matrix $H$ such that
\[\imagpart\langle p(\xi;Y),HY\rangle_{\C^N}\le 0\]
for $(\xi,Y)\in\R^2\times\C^N$.
\end{description}
Indeed, since $Y\mapsto \imagpart\langle p(\xi;Y),HY\rangle_{\C^N}$ is an odd 
function, we can see that ({\bf b}$'$) yields ({\bf b}) 
by substituting $-Y$ in place of  $Y$ in ({\bf b}$'$). It is worth noting 
that this equivalence fails if the original nonlinearity is cubic. 
For closely related works on the wave equation case, 
see \cite{KMatoS}, \cite{KMatsS} and Chapter~10 of \cite{Kata}. 
\end{rmk}

\begin{rmk}\label{rmk2}
If $\pa_x u$ is not included in the nonlinear term, then 
the conditions ({\bf a}) and ({\bf b}) are essentially the same as ones given 
in \cite{Li}. 
In particular, when we focus on the two-component system \eqref{twoNLS}, 
we can see that \eqref{mass_res2} plays the role of ({\bf a}) and that 
\eqref{structure2} leads to  ({\bf b}) with 
\[
 H=\begin{pmatrix} \lambda_1\lambda_2& 0\\ 0& |\lambda_1|^2\end{pmatrix}.
\]
\end{rmk}

\begin{rmk}\label{rmk3}
 If $p(\xi;Y)$ vanishes identically on $\R^2\times \C^N$, then the condition 
({\bf b}) is trivially satisfied.
Therefore our result can be viewed as an extension of \cite{IKS}. 
Under this stronger condition, we can show also that 
the solution $u(t)$ to \eqref{NLS} is asymptotically free 
by the same method as in \cite{IKS}. Note that ({\bf b}) does not imply 
the asymptotically free behavior in general, 
because non-existence of asymptotically free solutions for \eqref{twoNLS} 
has been shown in \cite{HLN1}.
\end{rmk}

\begin{rmk}\label{rmk4} 
In \cite{ColCol}, Colin--Colin introduced the following system 
as a model of laser-plasma interaction: 
\begin{align}
\begin{cases}
  i\pa_t A_C+\alpha \Delta A_C = -(\nabla\cdot E)A_R,\\
  i\pa_t A_R+\beta \Delta A_R = -(\nabla\cdot \cc{E})A_C,\\
  i\pa_t E+\gamma \Delta E = \nabla(\cc{A_C} \cdot A_R),
\end{cases}
\label{CC_model}
\end{align}
where $A_C$, $A_R$, $E$ are $\C^2$-valued functions and $\alpha$, $\beta$, 
$\gamma$ are non-zero real constants. 
When we define $u=(u_j)_{1\le j \le 6}$ by
\[
 A_C=\begin{pmatrix}u_1\\ u_2\end{pmatrix}, \ \ 
 A_R=\begin{pmatrix}u_3\\ u_4\end{pmatrix}, \ \ 
 E=\begin{pmatrix}u_5\\ u_6\end{pmatrix}
\]
and set $m_1=m_2=1/(2\alpha)$, $m_3=m_4=1/(2\beta)$, $m_5=m_6=1/(2\gamma)$, 
we have the six-component NLS system with 
\begin{align*}
\begin{cases}
 F_1=-(\pa_1 u_5+\pa_2 u_6)u_3,\\
 F_2=-(\pa_1 u_5+\pa_2 u_6)u_4,\\
 F_3=-(\pa_1 \cc{u_5} + \pa_2 \cc{u_6})u_1,\\
 F_4=-(\pa_1 \cc{u_5} + \pa_2 \cc{u_6})u_2,\\
 F_5=\pa_1 (u_1 \cc{u_3} + u_2 \cc{u_4}),\\
 F_6=\pa_2 (u_1 \cc{u_3} + u_2 \cc{u_4}).
\end{cases}
\end{align*}
For this system, we can easily see that 
({\bf a}) is satisfied if 
\begin{align}
\frac{1}{\alpha} = \frac{1}{\beta} + \frac{1}{\gamma}. 
\label{resonance}
\end{align} 
The condition ({\bf b}) is also satisfied with  $H=\diag (2,2,1,1,1,1)$ 
if we assume \eqref{resonance}. Indeed,  since 
\begin{align*}
\begin{cases}
 p_1(\xi; Y)=-i(m_5 \xi_1 Y_5 + m_6\xi_2 Y_6)Y_3,\\
 p_2(\xi; Y)=-i(m_5\xi_1 Y_5+ m_6\xi_2 Y_6)Y_4,\\
 p_3(\xi; Y)=i(m_5\xi_1 \cc{Y_5} + m_6\xi_2 \cc{Y_6})Y_1,\\
 p_4(\xi; Y)=i(m_5\xi_1 \cc{Y_5} + m_6\xi_2 \cc{Y_6})Y_2,\\
 p_5(\xi; Y)=i\xi_1 \{(m_1-m_3)Y_1 \cc{Y_3} + (m_2-m_4)Y_2 \cc{Y_4} \},\\
 p_6(\xi; Y)=i\xi_2 \{(m_1-m_3)Y_1 \cc{Y_3} + (m_2-m_4)Y_2 \cc{Y_4}\},
\end{cases}
\end{align*}
we have 
\begin{align*}
 \imagpart\jb{p(\xi; Y), HY}_{\C^6}
=&
\frac{1}{2}\left(\frac{1}{\alpha}-\frac{1}{\beta}-\frac{1}{\gamma}\right)
\realpart\Bigl[ (Y_1\cc{Y_3}+Y_2\cc{Y_4})(\xi_1 \cc{Y_5} +\xi_2\cc{Y_6})\Bigr]
\end{align*}
which vanishes identically on $\R^2\times \C^6$ under the relation 
\eqref{resonance}.
\\
\end{rmk}

Now, for the convenience of the readers, let us 
give a heuristic explanation for the roles played by 
our conditions ({\bf a}) and ({\bf b}). As in \cite{IKS} and \cite{LS1}, 
our starting point is to recall the fact that, if $u_j^0$ solves 
$\mathcal{L}_{m_j} u_j^0=0$ with $u^0_j(0,x)=\varphi_j(x)$, it holds that 
\[
 \pa_x^s u_j^0(t,x)\sim \left(im_j \frac{x}{t}\right)^s \frac{m_j}{it} 
 \hat{\varphi}_j\left( \frac{m_j x}{t}\right) e^{i\frac{m_j |x|^2}{2t}}
 +\cdots
\]
as $t\to+\infty$. Viewing it as a rough approximation of the solution 
$u_j$ for \eqref{NLS}, we may expect that $\pa_x^s u_j(t,x)$ could be 
better approximated by 
\[
 \left(im_j \frac{x}{t}\right)^s \frac{1}{t} 
 Y_j\left(\log t,  \frac{x}{t}\right) e^{i\frac{m_j |x|^2}{2t}}
\]
with a suitable function $Y=(Y_j(\tau,\xi))_{j\in I_N}$, 
where $\tau=\log t$, $\xi=x/t$ and $t\gg 1$. 
Note that 
$Y_j(0,\xi)=-im_j\, \hat{\varphi}_j(m_j \xi)$ 
and that the extra variable $\tau=\log t$ is responsible 
for possible long-range nonlinear effect. 
Substituting the above expression into \eqref{NLS} 
and keeping only the leading terms, 
we can see (at least formally) that $Y_j$ should satisfy 
the ordinary differential equation
\begin{align}
 i\pa_{\tau} Y_j(\tau,\xi) =p_j(\xi; Y(\tau,\xi))
\label{profile_ODE}
\end{align}
(where $\xi$ is regarded as a parameter) under the condition ({\bf a}). 
We remark that ({\bf a}) implies the symmetry 
\[
(u_j)_{j \in I_N}\mapsto (e^{im_j \theta} u_j)_{j \in I_N},\quad 
\theta \in \R,
\]
in \eqref{NLS}. This property, which we call {\em the gauge invariance},  
is used in this step. Another structure comes into play when the gauge 
invariance is violated (see \cite{LS2} for a detailed study on this issue 
in 1D cubic case). 
Next let $H$ be a positive Hermitian matrix. Then 
\eqref{profile_ODE} yields  
\[
 \pa_{\tau} \jb{Y(\tau,\xi),HY(\tau,\xi)}_{\C^N} 
 = 
 2\imagpart 
 \jb{p(\xi;Y(\tau,\xi)), HY(\tau,\xi)}_{\C^N},
\]
and the condition ({\bf b}), or equivalently ({\bf b}$'$), is just what makes this 
quantity non-positive. 
Since $|Y|^2$ and $\jb{Y,HY}_{\C^N}$ are equivalent, 
the inequality $\pa_{\tau}\jb{Y,HY}_{\C^N}\le 0$ 
implies that $Y(\tau,\xi)$ remains bounded when $\tau$ becomes large. 
Going back to the original variables, we see that the solution 
$u(t,x)$ for \eqref{NLS} decays like $O(t^{-1})$ in $L^{\infty}_x$ as 
$t\to +\infty$ under ({\bf b}). This is a heuristic reason why 
the solution has a desired decay property under ({\bf a}) and ({\bf b}).
We remark that \eqref{profile_ODE} is reduced to the trivial equation 
if we assume the stronger condition that $p(\xi;Y)$ vanishes identically. 
This gives a heuristic reason why the solution should be free from the 
long-range effect under this stronger condition, as mentioned in 
Remark~\ref{rmk3}.

Our strategy of the proof of Theorems \ref{thm_sdge} is to justify the above 
heuristic argument. Let us give a more detailed summary of our approach. 
The key is to introduce 
\[
 A_j(t,\xi)
 =
 \mathcal{F}_{m_j}[\mathcal{U}_{m_j}(t)^{-1} u_j(t,\cdot)](\xi),
\] 
where $\mathcal{F}_m$ and $\mathcal{U}_m(t)$ are given in 
Section~\ref{sec_prelim} below. 
Roughly speaking, this $A_j(t,\xi)$ is expected to play the role of 
$Y_j(\log t, \xi)$. We will see in Section~\ref{sec_pf_a_priori} that 
$A=(A_j(t,\xi))_{j\in I_N}$ satisfies
\begin{align}
 i\pa_t A_j(t,\xi)=\frac{1}{t}p_j(\xi; A(t,\xi))+ O(t^{-3/2+2\delta}) 
\label{reduced_sys}
\end{align}
and 
\[
 \|u(t,\cdot)\|_{L_x^{\infty}} 
 \le 
 t^{-1}\|A(t,\cdot)\|_{L_{\xi}^{\infty}} + O(t^{-3/2+\delta})
\]
with $0<\delta < 1/4$. 
To control the remainder terms, we need several 
$L^2$-estimates involving the operator $\op{J}_{m}$.
In the 1D cubic case, only one action of $\op{J}_{m}$ is enough 
for getting desired a priori $L^2$-bounds. 
This is the point where the one-dimensional nature 
(such as the imbedding $H^1(\R^1) \hookrightarrow L^{\infty}(\R^1)$) is 
used in \cite{LS1}. 
However, since we are considering the problem in $\R^2$ now, 
we have to use $\op{J}_{m}$ several times. 
Then, through the relation $\op{F}_m\op{U}_m^{-1}\op{J}_{m,a}
=({i}/{m})\pa_{\xi_a}\op{F}_m\op{U}_m^{-1}$, 
we must differentiate \eqref{reduced_sys} with respect to $\xi$ several times, 
and it destroys the good structure coming from the condition ({\bf b}). 
We will overcome this difficulty by getting suitable pointwise bounds 
for $\jb{\xi}^{8-|\gamma|}\pa_\xi^{\gamma} A_j(t,\xi)$ 
($|\gamma|\le 3$) 
up to moderate growth in $t$.   
This is the new ingredient of our proof.

\section{Preliminaries}  \label{sec_prelim}

This section is devoted to preliminaries on useful identities and estimates related to 
the operator $\op{J}_m$ and the free evolution group $\op{U}_m$, and 
on energy inequalities associated with the (sesqui-)linearized system.
In what follows we will denote several positive constants by the same letter 
$C$, which may vary from one line to another.

\subsection{The operator $\op{J}_m$ and the free evolution group $\op{U}_m$}

We set $\op{L}_m=i\pa_t+\dfrac{1}{2m}\Delta$, 
$\op{J}_{m}(t)=(\op{J}_{m,\,a}(t))_{a=1,\,2}$, 
$\op{J}_{m,\,a}(t)=x_a+\dfrac{i t}{m}\pa_{x_{a}}$ 
for $m\in\R\setminus\{0\}$. 
For simplicity of notation, we often write  $\op{J}_{m,a}$ 
instead of $\op{J}_{m,a}(t)$. It is easy to check that 
\begin{equation}\label{commute}
[\op{L}_m,\pa_{x_a}]=[\op{L}_{m},\op{J}_{m,a}]=0,
\quad[\op{J}_{m,a},\pa_{x_b}]=-\delta_{ab},
\quad[\op{J}_{m,a},\op{J}_{m,b}]=0~~(a,b=1,2),
\end{equation}
where $[\cdot,\cdot]$ denotes the commutator of two linear operators and 
$\delta_{ab}=1$ (if $a=b$), $=0$ (if $a \neq b$). 
We write $\op{J}_m^\alpha=\op{J}_{m,1}^{\alpha_1}\op{J}_{m,2}^{\alpha_2}$ 
for a multi-index $\alpha=(\alpha_1,\alpha_2)\in\Z_+^2$. 
The following identity is useful:
\begin{equation}\label{Jhenkei1}
\op{J}_{m,a}f
=\dfrac{i t}{m}e^{i m\theta}\pa_{x_a}(e^{-i m\theta}f),
\quad \theta=\frac{|x|^2}{2t}.
\end{equation}
Indeed, we can deduce the following lemmas from \eqref{Jhenkei1} and 
\eqref{commute}:
\begin{lem}\label{leibniz}
Let $m$, $\mu_1,\,\mu_2$ be non-zero real constants satisfying 
$m=\mu_1+\mu_2$. We have
\begin{align*}
\op{J}_{m,a}(\phi_1 \phi_2)
&=
\dfrac{\mu_1}{m} \left(\op{J}_{\mu_1,a}\phi_1\right) \phi_2
+\dfrac{\mu_2}{m} \phi_1 \left(\op{J}_{\mu_2,a} \phi_2\right),
\\
\op{J}_{m,a}(\phi_1 \cc{\phi_2})
&=
\dfrac{\mu_1}{m} \left(\op{J}_{\mu_1,a}\phi_1\right) \cc{\phi_2}
+\dfrac{\mu_2}{m} \phi_1 \left(\cc{\op{J}_{-\mu_2,a} \phi_2}\right),
\\
\op{J}_{m,a}(\cc{\phi_1}\cc{\phi_2})
&=
\dfrac{\mu_1}{m} \left(\cc{\op{J}_{-\mu_1,a}\phi_1}\right) \cc{\phi_2}
+\dfrac{\mu_2}{m} \cc{\phi_1} \left(\cc{\op{J}_{-\mu_2,a}\phi_2}\right)
\end{align*}
for $a=1$, $2$ and smooth $\C$-valued functions $\phi_1$, $\phi_2$.
\end{lem}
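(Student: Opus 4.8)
\section*{Proof proposal}

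The plan is to derive all three identities directly from the factorization \eqref{Jhenkei1}, exploiting the algebraic fact that the phase $\theta=|x|^2/(2t)$ is the same for every mass and that the prefactor $it/m$ is the only place where the mass enters. The starting observation is that \eqref{Jhenkei1} rewrites $\op{J}_{m,a}$ as a conjugated derivative; since $\pa_{x_a}$ is itself a derivation, the product rule for $\pa_{x_a}$ will transfer to $\op{J}_{m,a}$ once we keep careful track of the conjugating exponentials $e^{\pm i m\theta}$ and of how the constraint $m=\mu_1+\mu_2$ lets the exponentials split, $e^{-im\theta}=e^{-i\mu_1\theta}e^{-i\mu_2\theta}$.

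First I would prove the first identity. Apply \eqref{Jhenkei1} with mass $m$ to $\phi_1\phi_2$:
\begin{align*}
\op{J}_{m,a}(\phi_1\phi_2)
&=\frac{it}{m}e^{im\theta}\pa_{x_a}\bigl(e^{-im\theta}\phi_1\phi_2\bigr)
=\frac{it}{m}e^{im\theta}\pa_{x_a}\bigl((e^{-i\mu_1\theta}\phi_1)(e^{-i\mu_2\theta}\phi_2)\bigr).
\end{align*}
Now apply the ordinary Leibniz rule for $\pa_{x_a}$ to the product inside, then distribute $e^{im\theta}=e^{i\mu_1\theta}e^{i\mu_2\theta}$ onto the two resulting terms, reinserting the factors $\mu_1/m$ and $\mu_2/m$ so as to reconstruct $\op{J}_{\mu_1,a}\phi_1=\frac{it}{\mu_1}e^{i\mu_1\theta}\pa_{x_a}(e^{-i\mu_1\theta}\phi_1)$ and likewise $\op{J}_{\mu_2,a}\phi_2$. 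This yields exactly
\[
\op{J}_{m,a}(\phi_1\phi_2)=\frac{\mu_1}{m}(\op{J}_{\mu_1,a}\phi_1)\phi_2+\frac{\mu_2}{m}\phi_1(\op{J}_{\mu_2,a}\phi_2).
\]

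For the second and third identities the only new input is how $\op{J}_{m,a}$ interacts with complex conjugation. From \eqref{Jhenkei1} one reads off $\cc{\op{J}_{-\mu,a}\phi}=\cc{\frac{it}{-\mu}e^{-i\mu\theta}\pa_{x_a}(e^{i\mu\theta}\phi)}=\frac{it}{\mu}e^{i\mu\theta}\pa_{x_a}(e^{-i\mu\theta}\cc{\phi})$, i.e. conjugating $\op{J}_{-\mu,a}$ and the argument produces the ``$+\mu$'' conjugated-derivative that matches the factorization of $\op{J}_{\mu,a}$ acting on $\cc{\phi}$. With this bookkeeping, running the same argument as above on $\phi_1\cc{\phi_2}$ — writing $e^{-im\theta}=e^{-i\mu_1\theta}e^{-i\mu_2\theta}$, applying Leibniz to $(e^{-i\mu_1\theta}\phi_1)(e^{-i\mu_2\theta}\cc{\phi_2})$, and recognizing $e^{-i\mu_2\theta}\cc{\phi_2}=\cc{e^{i\mu_2\theta}\phi_2}$ — gives the second formula; the third follows identically by starting from $\cc{\phi_1}\cc{\phi_2}$. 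Since the relation $m=\mu_1+\mu_2$ guarantees $\mu_1,\mu_2$ and $m$ are all nonzero and the exponential split is valid, there is no genuine obstacle here — the only thing requiring care is consistent sign conventions in the conjugation identity, which is the step where a slip would most easily occur. I would state that conjugation lemma as a one-line preliminary and then the three product rules fall out uniformly.
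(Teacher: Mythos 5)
Your proof is correct and follows exactly the route the paper indicates (deducing the lemma from the conjugated-derivative identity \eqref{Jhenkei1}, splitting $e^{-im\theta}=e^{-i\mu_1\theta}e^{-i\mu_2\theta}$, applying the ordinary Leibniz rule, and using $\cc{\op{J}_{-\mu,a}\phi}=\op{J}_{\mu,a}\cc{\phi}$ for the conjugated factors); the paper itself leaves these details to the reader. The only cosmetic slip is the remark that $m=\mu_1+\mu_2$ "guarantees" the constants are nonzero — non-vanishing is a hypothesis of the lemma, not a consequence of the relation — but this does not affect the argument.
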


\begin{lem}\label{lem2}
Let $m$ be a non-zero real constant. We have 
\[ 
|\op{J}_{m}^{\beta}\pa_{x}^{\alpha}\phi| 
\leq C \sum_{\substack{\beta^{\prime} \leq \beta \\ 
\alpha^{\prime}\leq \alpha}}|\pa_{x}^{\alpha^{\prime}}
\op{J}_{m}^{\beta^{\prime}}\phi|
\]
for $\alpha,\ \beta\in \Z_+^2$ and a smooth function $\phi$.
\end{lem}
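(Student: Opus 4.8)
The statement to be proved is Lemma~\ref{lem2}, the commutator identity
\[
|\op{J}_{m}^{\beta}\pa_{x}^{\alpha}\phi| \leq C \sum_{\substack{\beta^{\prime}\leq\beta\\ \alpha^{\prime}\leq\alpha}} |\pa_{x}^{\alpha^{\prime}}\op{J}_{m}^{\beta^{\prime}}\phi|.
\]

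The plan is to prove this purely algebraically from the commutation relations \eqref{commute}, without ever using the explicit form \eqref{Jhenkei1}. The only facts needed are $[\op{J}_{m,a},\pa_{x_b}]=-\delta_{ab}$ and $[\op{J}_{m,a},\op{J}_{m,b}]=0$ (and trivially $[\pa_{x_a},\pa_{x_b}]=0$). Since both $\op{J}_m^{\beta}$ and $\pa_x^{\alpha}$ are products of the $2$ commuting operators within each family, the essential content is: when we move all the $\op{J}$'s to the right past all the $\pa$'s in the product $\op{J}_m^{\beta}\pa_x^{\alpha}$, each elementary transposition $\op{J}_{m,a}\pa_{x_a}=\pa_{x_a}\op{J}_{m,a}-1$ either keeps the factor intact or deletes one $\op{J}_{m,a}$ and one $\pa_{x_a}$. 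Iterating, $\op{J}_m^{\beta}\pa_x^{\alpha}$ expands as a finite integer-coefficient linear combination of terms $\pa_x^{\alpha'}\op{J}_m^{\beta'}$ with $\alpha'\leq\alpha$, $\beta'\leq\beta$ (indeed with $\alpha-\alpha'=\beta-\beta'$ componentwise, though only the inequalities are needed). Taking absolute values and bounding the (finitely many, dimension-only) integer coefficients by a constant $C$ gives the claim.

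The cleanest way to organize this is by induction on $|\beta|$. For $|\beta|=0$ there is nothing to prove. For the inductive step, write $\op{J}_m^{\beta}=\op{J}_{m,a}\op{J}_m^{\beta-e_a}$ for some $a$ with $\beta_a\geq 1$, apply the inductive hypothesis to $\op{J}_m^{\beta-e_a}\pa_x^{\alpha}$ to reduce to terms $\op{J}_{m,a}\pa_x^{\alpha'}\op{J}_m^{\beta'}$ with $\alpha'\leq\alpha$, $\beta'\leq\beta-e_a$, and then commute the single $\op{J}_{m,a}$ to the right through $\pa_x^{\alpha'}=\pa_{x_a}^{\alpha'_a}\pa_{x_b}^{\alpha'_b}$ (where $\{a,b\}=\{1,2\}$): since $\op{J}_{m,a}$ commutes with $\pa_{x_b}$, and $\op{J}_{m,a}\pa_{x_a}^{k}=\pa_{x_a}^{k}\op{J}_{m,a}-k\,\pa_{x_a}^{k-1}$ (a one-line induction in $k$ using $[\op{J}_{m,a},\pa_{x_a}]=-1$), we get $\op{J}_{m,a}\pa_x^{\alpha'}\op{J}_m^{\beta'}=\pa_x^{\alpha'}\op{J}_m^{\beta'+e_a}-\alpha'_a\,\pa_x^{\alpha'-e_a}\op{J}_m^{\beta'+e_a}$. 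Both resulting terms are of the required form with multi-indices still $\leq(\alpha,\beta)$, and the coefficient $\alpha'_a$ is bounded by $|\alpha|$. Collecting everything and absorbing all the accumulated integer coefficients into a single constant $C$ depending only on $\alpha,\beta$ (equivalently, on a fixed finite range) finishes the proof.

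There is essentially no hard step here; this is a routine bookkeeping lemma. The only point requiring mild care is making sure the multi-index ordering is respected at every stage — i.e.\ that commuting never produces a $\pa_x^{\alpha'}$ or $\op{J}_m^{\beta'}$ outside the ranges $\alpha'\leq\alpha$, $\beta'\leq\beta$ — but this is immediate since each transposition only ever lowers an exponent or leaves it fixed. One can also avoid the explicit induction and simply observe that in the free (noncommutative polynomial) algebra on the symbols $\op{J}_{m,1},\op{J}_{m,2},\pa_{x_1},\pa_{x_2}$ modulo the relations \eqref{commute}, the normal form (all $\pa$'s left, all $\op{J}$'s right) of any monomial is a $\Z$-linear combination of $\pa_x^{\alpha'}\op{J}_m^{\beta'}$ with $\alpha'\leq\alpha$, $\beta'\leq\beta$, and then the triangle inequality gives the stated bound with $C=C(\alpha,\beta)$. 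I would present the induction version since it is self-contained and transparent.
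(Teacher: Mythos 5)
Your argument is correct and is exactly the deduction from the commutation relations \eqref{commute} that the paper intends (the paper states the lemma without proof, saying only that it follows from \eqref{Jhenkei1} and \eqref{commute}); the normal-ordering identity $\op{J}_m^{\beta}\pa_x^{\alpha}=\sum c_{\alpha',\beta'}\,\pa_x^{\alpha'}\op{J}_m^{\beta'}$ with integer coefficients and $\alpha'\le\alpha$, $\beta'\le\beta$ plus the triangle inequality is all that is needed. One small slip: in your displayed step the second term should read $-\alpha'_a\,\pa_x^{\alpha'-e_a}\op{J}_m^{\beta'}$ rather than $-\alpha'_a\,\pa_x^{\alpha'-e_a}\op{J}_m^{\beta'+e_a}$ (the transposition deletes one $\op{J}_{m,a}$ together with one $\pa_{x_a}$), but since both exponents are $\le\beta$ this does not affect the conclusion.
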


Next we set $\op{U}_{m}(t):=e^{i\frac{t}{2m}\Delta}$, that is, 
\begin{equation*}
\left(\op{U}_{m}(t)\phi\right)(x)
=\dfrac{m}{2\pi i t }\int_{\R^2}e^{i m\frac{|x-y|^2}{2t}}\phi(y) dy
\end{equation*}
for $m\in\R\setminus\{0\}$, and $t>0$.
Then we have
\begin{equation}\label{U_m}
\op{U}_{m}[x_a\phi]
=\op{J}_{m,\,a}\op{U}_m\phi,
\quad
\op{U}_m\pa_{x_b}\phi=\pa_{x_b}\op{U}_m\phi.
\end{equation}
We also introduce the scaled Fourier transform $\op{F}_m$ by
\begin{equation*}
\bigl(\op{F}_{m}\phi\bigr)(\xi)
:=-i m\hat\phi(m\xi)=\frac{m}{2\pi i}\int_{\R^2}e^{-i my\cdot\xi}\phi(y)dy,
\end{equation*}
as well as auxiliary operators
\begin{equation*}
(\op{M}_{m}(t)\phi)(x)
:=e^{i m\frac{|x|^2}{2t}}\phi(x),\ \ \ 
(\op{D}(t)\phi)(x):=\dfrac{1}{t}\phi\left(\frac{x}{t}\right),\ \ \ 
\op{W}_{m}(t)\phi:=\op{F}_m\op{M}_m(t)\op{F}^{-1}_m\phi,
\end{equation*}
where $\hat\phi$ denotes the standard Fourier transform of $\phi$, i.e.
\[
\hat\phi(\xi)=\bigl(\op{F}\phi \bigr)(\xi)
:=\dfrac{1}{2\pi}\int_{\R^2}e^{-i y\cdot\xi}\phi(y)dy.
\]
Then we can see that
\begin{equation}\label{F_m}
\op{F}_m[x_a\phi](\xi)
=\frac{i}{m}\pa_{\xi_a}\bigl(\op{F}_m\phi\bigr)(\xi),
\quad 
\op{F}_m\pa_{x_b}\phi=i m\xi_b\op{F}_{m}\phi.
\end{equation}
and that $\op{U}_m$ can be decomposed into
\begin{equation*}
\op{U}_{m}=\op{M}_m\op{D}\op{F}_m\op{M}_m=\op{M}_m\op{D}\op{W}_m\op{F}_m.
\end{equation*}
Note that the operators $\op{U}_m,\ \op{F}_m,\ \op{M}_m,\ \op{D}$ and 
$\op{W}_m$ above are isometries on $L^2$. 
By \eqref{U_m} and \eqref{F_m}, we can easily check that
\begin{equation}\label{lem1}
(i m\xi)^{\alpha}\op{F}_{m}\op{U}^{-1}_{m}\phi
=\op{F}_{m}\op{U}^{-1}_{m}\pa_x^{\alpha}\phi, \quad
\left(\frac{i}{ m}\pa_{\xi}\right)^{\beta}\op{F}_{m}\op{U}^{-1}_{m}\phi
=\op{F}_{m}\op{U}^{-1}_{m}\op{J}_{m}^{\beta}\phi.
\end{equation}
for all $\alpha,\,\beta\in\Z_+^2$.

\begin{lem}\label{lem3} 
Let $m$ be a non-zero real constant. 
We set 
$A (t,\xi)=\op{F}_{m} \bigl[\op{U}^{-1}_{m}(t)\phi(t,\cdot)\bigr](\xi)$ 
for a smooth function $\phi(t,x)$.
\begin{itemize}
\item[(1)]
For $s, \sigma\in \Z_+$, we have
\[
 \|A(t,\cdot)\|_{H^{s,\sigma}}
 \le 
 C \sum_{|\beta|\le s} \|\op{J}_{m}(t)^{\beta}\phi(t,\cdot)\|_{H^{\sigma}}.\]
\item[(2)]
For $\alpha,\ \beta\in\Z_+^2$, we have
\[
\|\pa_x^\alpha \op{J}_m(t)^\beta \phi(t,\cdot)\|_{L_x^2} 
\le 
C \|\jb{\,\cdot\,}^{|\alpha|+2}\pa_{\xi}^\beta A(t,\cdot) \|_{L_{\xi}^\infty}.
\]
\end{itemize}
\end{lem}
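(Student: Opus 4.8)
The plan is to derive both estimates directly from the operator identities of this section --- namely \eqref{U_m}, \eqref{F_m} and \eqref{lem1} --- together with the fact that $\op{F}_m$ and $\op{U}_m$ are isometries on $L^2$; no analysis beyond bookkeeping of weights will be needed.

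For part~(1), I would first note that, since $\jb{\xi}\ge 1$ and $|\pa_\xi^{\gamma}\jb{\xi}^{\sigma}|\le C\jb{\xi}^{\sigma}$ for every $\gamma\in\Z_+^2$, the Leibniz rule gives
\[
\|A(t,\cdot)\|_{H^{s,\sigma}}=\bigl\|\jb{\,\cdot\,}^{\sigma}A(t,\cdot)\bigr\|_{H^{s}}\le C\sum_{|\alpha|\le s}\bigl\|\jb{\,\cdot\,}^{\sigma}\pa_\xi^{\alpha}A(t,\cdot)\bigr\|_{L_\xi^{2}}.
\]
By the second relation in \eqref{lem1}, $\pa_\xi^{\alpha}A=(m/i)^{|\alpha|}\op{F}_m\bigl[\op{U}_m^{-1}\op{J}_m^{\alpha}\phi\bigr]$, so it suffices to establish $\|\jb{\,\cdot\,}^{\sigma}\op{F}_m\psi\|_{L^2}\le C\|\psi\|_{H^{\sigma}}$ and $\|\op{U}_m^{-1}\op{J}_m^{\alpha}\phi\|_{H^{\sigma}}=\|\op{J}_m^{\alpha}\phi\|_{H^{\sigma}}$. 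For the former, a change of variables in the definition of $\op{F}_m$ turns multiplication by $\jb{\xi}^{\sigma}$ into the Fourier multiplier with symbol $\jb{\eta/m}^{\sigma}$, which is bounded by $C\jb{\eta}^{\sigma}$, and $\op{F}_m$ is an $L^2$-isometry; for the latter, $\op{U}_m$ commutes with each $\pa_{x_b}$ by \eqref{U_m}, hence with the multiplier $\jb{D_x}^{\sigma}$, and is an $L^2$-isometry. Summing over $|\alpha|\le s$ then gives~(1).

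For part~(2), I would apply $\op{F}_m\op{U}_m^{-1}$ to $\pa_x^{\alpha}\op{J}_m^{\beta}\phi$ and use the two relations in \eqref{lem1} in turn to obtain
\[
\op{F}_m\op{U}_m^{-1}\bigl[\pa_x^{\alpha}\op{J}_m^{\beta}\phi\bigr](\xi)=(im\xi)^{\alpha}(i/m)^{|\beta|}\,\pa_\xi^{\beta}A(t,\xi),
\]
whence, by the $L^2$-isometry of $\op{F}_m$ and $\op{U}_m$, $\|\pa_x^{\alpha}\op{J}_m^{\beta}\phi\|_{L_x^{2}}\le C\|\xi^{\alpha}\pa_\xi^{\beta}A(t,\cdot)\|_{L_\xi^{2}}$. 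Then I would extract the sup-norm: with $\rho=|\alpha|+2$,
\[
\bigl|\xi^{\alpha}\pa_\xi^{\beta}A(t,\xi)\bigr|^{2}\le\jb{\xi}^{2|\alpha|-2\rho}\bigl(\jb{\xi}^{\rho}|\pa_\xi^{\beta}A(t,\xi)|\bigr)^{2}\le\jb{\xi}^{-4}\bigl\|\jb{\,\cdot\,}^{|\alpha|+2}\pa_\xi^{\beta}A(t,\cdot)\bigr\|_{L_\xi^{\infty}}^{2},
\]
and integrating over $\xi\in\R^{2}$, using $\int_{\R^{2}}\jb{\xi}^{-4}\,d\xi<\infty$, finishes~(2).

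I do not expect a genuine obstacle: the lemma is essentially a repackaging of the commutation identities already proved. The one point requiring attention is the choice of weight exponents in~(2) --- the value $|\alpha|+2$ is dictated precisely by the two-dimensional integrability $\int_{\R^{2}}\jb{\xi}^{-4}\,d\xi<\infty$ (in one space dimension $|\alpha|+1$ would suffice) --- and, in~(1), checking that conjugating the weight $\jb{\xi}^{\sigma}$ through the scaled Fourier transform $\op{F}_m$ costs only a constant depending on $m$ and $\sigma$, uniformly in $t$ and $\phi$.
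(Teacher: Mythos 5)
Your proof is correct and follows essentially the same route as the paper: both parts reduce to the commutation identities \eqref{lem1} together with the $L^2$-isometry of $\op{F}_m$ and $\op{U}_m$, and your part~(2) (pulling out $\jb{\xi}^{-2}\in L^2(\R^2)$) is verbatim the paper's argument. The only cosmetic difference is in part~(1), where the paper expands the weight into monomials $\xi^{\alpha}$, $|\alpha|\le\sigma$, and converts them to $\pa_x^{\alpha}$ via \eqref{lem1} (then invokes Lemma~\ref{lem2}), whereas you push $\jb{\xi}^{\sigma}$ through $\op{F}_m$ directly as a bounded multiplier $\jb{\eta/m}^{\sigma}\le C\jb{\eta}^{\sigma}$ — both are valid bookkeeping of the same idea.
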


\begin{proof}
By Lemma~\ref{lem2} and \eqref{lem1}, we have 
\[
\|A\|_{H^{s,\sigma}} 
\leq 
C\sum_{\substack{ |\alpha|\le \sigma\\ |\beta|\le s}}
\|\pa_{\xi}^{\beta}(\xi^{\alpha} A)\|_{L^2}
\leq 
C\sum_{\substack{ |\alpha|\le \sigma\\ |\beta|\le s}}
\|\op{F}_m\op{U}_m^{-1}\op{J}_{m}^{\beta}\pa_x^{\alpha}\phi\|_{L^2} 
 \le 
 C \sum_{|\beta|\le s} \|\op{J}_{m}^{\beta}\phi(t,\cdot)\|_{H^{\sigma}}
\]
and
\begin{align*}
\|\pa_x^\alpha \op{J}_m^\beta \phi\|_{L^2_x}
\le 
C\|\jb{\xi}^{|\alpha|}\pa_{\xi}^\beta 
\op{F}_m\op{U}_m^{-1}\phi \|_{L^2_\xi}
\le 
C\|\jb{\xi}^{-2}\|_{L^2_\xi}
\|\jb{\xi}^{|\alpha|+2} \pa_{\xi}^\beta A \|_{L_\xi^\infty},
\end{align*}
as desired.
\end{proof}

\begin{lem}\label{W}
We have
\[
\|\op{W}_m(t)\phi-\phi\|_{L^{\infty}}
+
\|\op{W}_m(t)^{-1}\phi-\phi\|_{L^{\infty}}
\le 
Ct^{-1/2}\|\phi\|_{H^2}
\]
for $t>0$.
\end{lem}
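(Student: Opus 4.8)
The plan is to reduce the claim to an elementary $L^1$–$L^\infty$ bound for the operator $\op{F}_m$. By the definition $\op{W}_m(t)=\op{F}_m\op{M}_m(t)\op{F}_m^{-1}$ and linearity,
\[
 \op{W}_m(t)\phi-\phi=\op{F}_m\bigl[(e^{im|\cdot|^2/(2t)}-1)\,\op{F}_m^{-1}\phi\bigr],
\]
and the integral formula for $\op{F}_m$ gives $|(\op{F}_m g)(\xi)|\le\frac{|m|}{2\pi}\|g\|_{L^1}$ for every $\xi$. Hence it suffices to show
\[
 \bigl\|(e^{im|\cdot|^2/(2t)}-1)\,\op{F}_m^{-1}\phi\bigr\|_{L^1}\le Ct^{-1/2}\|\phi\|_{H^2}.
\]
Since $|e^{-i\theta}-1|=|e^{i\theta}-1|$, the same computation with $\op{M}_m(t)$ replaced by $\op{M}_m(t)^{-1}$ handles $\op{W}_m(t)^{-1}=\op{F}_m\op{M}_m(t)^{-1}\op{F}_m^{-1}$, so I would treat only $\op{W}_m(t)$.

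Write $g=\op{F}_m^{-1}\phi$ and denote by $x$ its variable. Using $|e^{i\theta}-1|\le\min(2,|\theta|)$ and Cauchy–Schwarz,
\[
 \bigl\|(e^{im|\cdot|^2/(2t)}-1)g\bigr\|_{L^1}
 \le\Bigl\|\min\bigl(2,\tfrac{|m||x|^2}{2t}\bigr)\jb{x}^{-2}\Bigr\|_{L^2_x}\,\bigl\|\jb{x}^2 g\bigr\|_{L^2_x},
\]
so it remains to check that the first factor is $O(t^{-1/2})$ uniformly in $t>0$. For $t\le1$ this is immediate since $\jb{x}^{-2}\in L^2(\R^2)$ and $t^{-1/2}\ge1$. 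For $t\ge1$, split the integral of the square of that factor over $\{|x|\le\sqrt t\}$, where the integrand is $\lesssim m^2 t^{-2}$ on a set of measure $\pi t$, and over $\{|x|>\sqrt t\}$, where it is $\lesssim\jb{x}^{-4}$ with $\int_{|x|>\sqrt t}\jb{x}^{-4}\,dx=O(t^{-1})$; both pieces are $O(t^{-1})$. Note that the naive estimate $|e^{i\theta}-1|\lesssim|\theta|^{1/2}$ followed by Cauchy–Schwarz against $|x|\,g$ would fail, because $\jb{x}^{-1}\notin L^2(\R^2)$; the truncation at $|x|=\sqrt t$ is precisely what repairs this two-dimensional borderline integrability and fixes the exponent $-1/2$.

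Finally I would convert $\|\jb{x}^2\op{F}_m^{-1}\phi\|_{L^2}$ into a Sobolev norm of $\phi$. Substituting $\op{F}_m^{-1}\phi$ for $\phi$ in \eqref{F_m} and using that $\op{F}_m$ is an isometry on $L^2$, one obtains $x_a\op{F}_m^{-1}\phi=\frac{i}{m}\op{F}_m^{-1}[\pa_{\xi_a}\phi]$ and, iterating, $x_ax_b\op{F}_m^{-1}\phi=\bigl(\frac{i}{m}\bigr)^2\op{F}_m^{-1}[\pa_{\xi_a}\pa_{\xi_b}\phi]$; taking $L^2$ norms gives $\|\jb{x}^2\op{F}_m^{-1}\phi\|_{L^2}\le C\sum_{|\gamma|\le2}\|\pa_\xi^\gamma\phi\|_{L^2}\le C\|\phi\|_{H^2}$. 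Combining the three steps yields the assertion. No step here is a genuine obstacle — this is a routine estimate with no stationary phase involved — so the only point needing care is the $\R^2$ integrability issue in the middle step, which forces the truncation radius $\sqrt t$ and hence the $t^{-1/2}$ rate.
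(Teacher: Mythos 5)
Your proof is correct, but it takes a different route from the paper's. The paper's argument is interpolation-based: it applies the two-dimensional Gagliardo--Nirenberg inequality $\|\psi\|_{L^\infty}\le C\|\psi\|_{L^2}^{1/2}\|\Delta\psi\|_{L^2}^{1/2}$ to $\psi=(\op{W}_m^{\pm1}-1)\phi$, puts the full gain $t^{-1}$ on the $L^2$ factor via $|e^{i\theta}-1|\le|\theta|$ (which costs $\||x|^2\op{F}_m^{-1}\phi\|_{L^2}\le C\|\phi\|_{H^2}$), commutes $\Delta$ through $\op{W}_m^{\pm1}-1$ and bounds the second factor crudely by $\|\Delta\phi\|_{L^2}$; the exponent $1/2$ in the interpolation is what converts $t^{-1}$ into $t^{-1/2}$. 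You instead use the trivial $L^1\to L^\infty$ bound for $\op{F}_m$, Cauchy--Schwarz against the weight $\jb{x}^{-2}$, and a hands-on splitting of $\|\min(2,|m||x|^2/(2t))\jb{x}^{-2}\|_{L^2}$ at $|x|=\sqrt t$; both regions indeed contribute $O(t^{-1})$ to the squared norm, and your reduction $\|\jb{x}^2\op{F}_m^{-1}\phi\|_{L^2}\le C\|\phi\|_{H^2}$ via \eqref{F_m} and the $L^2$-isometry of $\op{F}_m$ is sound. The paper's version is shorter and reuses a tool already present elsewhere in the argument, while yours is more elementary, avoids the commutation of $\Delta$ with $\op{W}_m^{\pm1}$, and makes the origin of the rate $t^{-1/2}$ (the truncation radius $\sqrt t$ forced by the failure of $\jb{x}^{-1}\in L^2(\R^2)$) completely explicit; both arguments need exactly the $H^2$ norm on the right-hand side.
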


\begin{proof}
From the inequalities $|e^{i\theta}-1|\leq |\theta|$ and 
$\|\phi\|_{L^\infty}\leq C\|\phi\|_{L^2}^{1/2}\|\Delta \phi \|_{L^2}^{1/2}$, 
we see that
\begin{align}
\|\left(\op{W}_{m}^{\pm 1}-1\right)\phi \|_{L^\infty} 
&\leq  
C\|\left(\op{M}_m^{\pm 1}-1\right)\op{F}_m^{-1}\phi\|_{L^2}^{1/2}\ 
\|\Delta\left(\op{W}_{m}^{\pm 1}-1\right) \phi \|_{L^2}^{1/2} \notag \\
&\leq 
C\|t^{-1}|x|^2\op{F}_m^{-1}\phi\|_{L^2}^{1/2}\ 
\|\left(\op{W}_{m}^{\pm 1}-1\right) \Delta\phi \|_{L^2}^{1/2}\notag \\
&\leq 
Ct^{-1/2}\| |x|^2\op{F}_m^{-1}\phi\|_{L^2}^{1/2}\ \|\Delta\phi \|_{L^2}^{1/2} 
\notag \\
&\leq 
Ct^{-1/2}\|\phi\|_{H^2}.\notag
\end{align}
\end{proof}

\begin{lem}\label{Linftylem}
Let $m\in \R\setminus \{0\}$. We have
\[
\| \phi- \op{M}_{m}\op{D}\op{F}_{m}\op{U}^{-1}_{m}\phi \|_{L^\infty} 
\leq 
Ct^{-3/2}\sum_{|\beta|\leq2}\|\op{J}_m^\beta\phi\|_{L^2}
\]
and
\begin{align}
&\|\phi\|_{L^\infty} 
\leq 
t^{-1}\|\op{F}_m\op{U}^{-1}_m\phi\|_{L^\infty} 
+ Ct^{-3/2}\sum_{|\beta|\leq2}\|\op{J}_m^\beta \phi \|_{L^2} 
\label{Linfty}
\end{align}
for $t>0$.
\end{lem}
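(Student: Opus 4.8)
The plan is to exploit the factorization $\op{U}_m=\op{M}_m\op{D}\op{W}_m\op{F}_m$ recorded above, together with Lemma~\ref{W} and Lemma~\ref{lem3}. Writing $\phi=\op{U}_m\op{U}_m^{-1}\phi=\op{M}_m\op{D}\op{W}_m\op{F}_m\op{U}_m^{-1}\phi$, I would first observe that
\[
\phi-\op{M}_m\op{D}\op{F}_m\op{U}_m^{-1}\phi=\op{M}_m\op{D}\bigl(\op{W}_m-1\bigr)\op{F}_m\op{U}_m^{-1}\phi .
\]
Since $\op{M}_m$ is multiplication by a function of modulus one it preserves the $L^\infty$ norm, while $(\op{D}(t)g)(x)=t^{-1}g(x/t)$ gives $\|\op{D}(t)g\|_{L^\infty}=t^{-1}\|g\|_{L^\infty}$. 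Hence the left-hand side has $L^\infty$ norm equal to $t^{-1}\|(\op{W}_m-1)\op{F}_m\op{U}_m^{-1}\phi\|_{L^\infty}$.

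Next I would apply Lemma~\ref{W} with the test function there replaced by $\op{F}_m\op{U}_m^{-1}\phi$, which bounds that quantity by $Ct^{-1/2}\|\op{F}_m\op{U}_m^{-1}\phi\|_{H^2}$; combined with the previous step this yields $\|\phi-\op{M}_m\op{D}\op{F}_m\op{U}_m^{-1}\phi\|_{L^\infty}\le Ct^{-3/2}\|\op{F}_m\op{U}_m^{-1}\phi\|_{H^2}$. Finally, Lemma~\ref{lem3}(1) with $s=2$ and $\sigma=0$ (so that $H^{2,0}=H^2$) converts $\|\op{F}_m\op{U}_m^{-1}\phi\|_{H^2}$ into $C\sum_{|\beta|\le 2}\|\op{J}_m^\beta\phi\|_{L^2}$, which is the first assertion.

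The second assertion is then immediate from the triangle inequality
$\|\phi\|_{L^\infty}\le\|\op{M}_m\op{D}\op{F}_m\op{U}_m^{-1}\phi\|_{L^\infty}+\|\phi-\op{M}_m\op{D}\op{F}_m\op{U}_m^{-1}\phi\|_{L^\infty}$, using once more that $\|\op{M}_m\op{D}\op{F}_m\op{U}_m^{-1}\phi\|_{L^\infty}=t^{-1}\|\op{F}_m\op{U}_m^{-1}\phi\|_{L^\infty}$ for the leading term and the first assertion for the remainder. There is no serious obstacle here; the only point requiring a little care is keeping track of how $\op{M}_m$ and $\op{D}$ act on $L^\infty$ — in particular the $t^{-1}$ gain produced by $\op{D}$, as opposed to the fact that $\op{M}_m,\op{D},\op{F}_m$ are merely isometries on $L^2$ — and making sure the Sobolev bookkeeping in the application of Lemma~\ref{lem3} matches the $H^2$ norm coming out of Lemma~\ref{W}.
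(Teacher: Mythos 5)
Your proposal is correct and follows essentially the same route as the paper: the factorization $\op{U}_m=\op{M}_m\op{D}\op{W}_m\op{F}_m$ to rewrite the difference as $\op{M}_m\op{D}(\op{W}_m-1)\op{F}_m\op{U}_m^{-1}\phi$, the $t^{-1}$ gain from $\op{D}$ in $L^\infty$, Lemma~\ref{W} for the $t^{-1/2}$ factor, and Lemma~\ref{lem3}(1) with $s=2$, $\sigma=0$ to pass to the $\op{J}_m$ norms. The deduction of \eqref{Linfty} by the triangle inequality is also exactly what the paper does.
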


\begin{proof}
By Lemmas \ref{lem3}, \ref{W} and the relation 
$\op{U}_m=\op{M}_m\op{D}\op{W}_m\op{F}_m$, we have
\begin{align*}
\| \phi- \op{M}_{m}\op{D}\op{F}_{m}\op{U}^{-1}_{m}\phi \|_{L^\infty} 
&= 
\|\op{M}_{m}\op{D}\left(\op{W}_m -1\right)\op{F}_{m}\op{U}^{-1}_{m}\phi 
\|_{L^\infty} \\
&\leq 
t^{-1} \| \left(\op{W}_m -1\right)\op{F}_{m}\op{U}^{-1}_{m}\phi \|_{L^\infty} 
\\
&\leq 
Ct^{-3/2} \|\op{F}_{m}\op{U}^{-1}_{m}\phi \|_{H^2} \\
&\leq 
Ct^{-3/2} \sum_{|\beta|\leq2}\|\op{J}_m^\beta\phi\|_{L^2}.
\end{align*}
The second inequality follows immediately from the first one.
\end{proof}

\subsection{Energy inequalities}  \label{sec_smoothing}

In this subsection we focus on the Cauchy problem for 
\begin{equation}\label{smoothv}
\op{L}_{m_j}v_j =\sum_{k\in \sh{I}_N}\sum_{a=1}^2g_{jk,a}\pa_{x_a}\sh{v}_k
+G_j,\quad (t,x)\in(0,T)\times\R^2, \ j\in I_N,
\end{equation} 
where $m_j\in\R\setminus\{0\}$, $T>0$ are constants, and $g=(g_{jk,a})$, 
$G=(G_j)$ are 
given functions of $(t,x)$ having suitable regularity and decay at spatial 
infinity. Our goal here is to derive an $L^2$-bound for the solution 
$v=(v_j)_{j\in I_N}$ to this system, 
keeping in mind applications to \eqref{NLS} in the subsequent sections. 
If $g_{jk,a}(t,x)\equiv 0$, there is no difficulty because the standard energy 
integral method immediately yields
\[
 \|v(t,\cdot)\|_{L^2}
 \le 
 \|v(t_0,\cdot)\|_{L^2} +\int_{t_0}^t \|G(\tau,\cdot)\|_{L^2}\, d\tau
\] 
for $t_0$, $t\in [0,T)$ with $t_0\le t$. 
On the other hand, when $g_{jk,a}(t,x)\not \equiv 0$, 
it is also well-known that the energy inequality of this kind fails to hold 
and we are faced with a difficulty of derivative loss in general. 
Therefore we need some restrictions on $g_{jk,a}$ in order to 
control the $L_x^2$-norm of $v(t,x)$ in terms of $G(t,x)$ and the initial data
(see e.g., Chapter 7 of 
\cite{Miz} for more information on this subject). 
Now, let $\mu_k \in \R \setminus\{0\}$ be given and we set 
\[
\Omega_{t_1,t_2}:=
\sup_{t\in[t_1,t_2)}\sum_{|\beta|\leq2}\sum_{j\in I_N}\sum_{k\in \sh{I}_N}
\sum_{a=1}^2 \jb{t}^{-|\beta|+1}\|\op{J}_{{\mu}_{k}}(t)^\beta
g_{jk,a}(t,\cdot)\|_{W^{2-|\beta|,\infty}}
\]
for $0\le t_1<t_2\le T$. We will show that a kind of energy inequality holds 
if $\Omega$ is suitably small. 
More precisely, we have the following:
\begin{prp}\label{lem:smooth3}
Let $t_0\in [0,T)$ be given and put $\Omega=\Omega_{t_0,T}$. 
Suppose that $v$ solves \eqref{smoothv}. 
There exists positive constants $\omega_0$ and $C_0$, not depending on 
$t_0$ and $T$, such that we have
\[
\|v(t,\cdot)\|_{L^2}
\le 
C_0\|v(t_0,\cdot)\|_{L^2}
+
C_0\int_{t_0}^t \left(
 \frac{\Omega}{\jb{\tau}}\|v(\tau,\cdot)\|_{L^2} + \|G(\tau,\cdot)\|_{L^2}
\right)d\tau 
\] 
for $t\in [t_0,T)$, provided that $\Omega \le \omega_0$.
\end{prp}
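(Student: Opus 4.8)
The plan is to run a standard $L^2$ energy estimate but to absorb the derivative loss coming from the first-order terms $g_{jk,a}\pa_{x_a}\sh{v}_k$ by exploiting the smallness of $\Omega$ and a gauge-type change of unknown. First I would rewrite the dangerous terms using \eqref{Jhenkei1}: since $\pa_{x_a}=\frac{m}{it}e^{im\theta}\op{J}_{m,a}(e^{-im\theta}\cdot)\cdot$ up to algebra, one trades a bare derivative $\pa_{x_a}\sh{v}_k$ for a factor of $t^{-1}$ times $\op{J}_{\mu_k,a}$ acting on a phase-shifted quantity, at the cost of commutators which are lower order. Equivalently, the coefficient $g_{jk,a}$ should be paired with $\op{J}_{\mu_k,a}$ rather than $\pa_{x_a}$, which is exactly why $\Omega_{t_1,t_2}$ is measured in terms of $\op{J}_{\mu_k}^\beta g_{jk,a}$ with the weights $\jb{t}^{-|\beta|+1}$. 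The subtlety is that $v_k$ (or $\sh v_k$) is not itself a solution of a free equation, so $\op{J}_{\mu_k}$ applied to it is not directly controlled; the correct bookkeeping is to integrate by parts in $x$ in the energy identity so that the derivative falls on $g_{jk,a}$, not on $v$.

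The core computation is then: differentiate $\frac12\frac{d}{dt}\|v(t,\cdot)\|_{L^2}^2 = \realpart \jb{\pa_t v, v}$ and substitute \eqref{smoothv}, using $\op{L}_{m_j}v_j = G_j + \sum g_{jk,a}\pa_{x_a}\sh v_k$ and $i\pa_t v_j = \op{L}_{m_j}v_j - \frac{1}{2m_j}\Delta v_j$. The $\Delta$ term contributes a purely imaginary quantity after integration by parts and drops out of the real part, as usual. The term with $G$ gives the harmless $\|G\|_{L^2}\|v\|_{L^2}$. The problematic term is
\[
\realpart \sum_{j,k,a} \frac{1}{i}\int_{\R^2} g_{jk,a}(\pa_{x_a}\sh v_k)\,\overline{v_j}\,dx;
\]
here I would integrate by parts in $x_a$ to move $\pa_{x_a}$ off $\sh v_k$, producing one term with $(\pa_{x_a}g_{jk,a})\sh v_k\,\overline{v_j}$ and one with $g_{jk,a}\sh v_k\,\overline{\pa_{x_a}v_j}$; the latter, combined with its complex conjugate partner coming from the $k,l$-symmetric structure of the system, recombines (modulo another term with $\pa_{x_a}g_{jk,a}$) so that the surviving top-order piece is again of the form $(\pa_x g)\cdot v\cdot v$. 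Using $\|\pa_x g_{jk,a}(t,\cdot)\|_{L^\infty}\le C\Omega/\jb t$ — which is the $|\beta|=1$, or rather the $W^{1,\infty}$ part of the $|\beta|=0$ contribution to $\Omega$ with weight $\jb t^{0}$, and note Lemma~\ref{lem2} lets us pass between $\pa_x$ and $\op{J}_{\mu_k}$ bounds — each such term is estimated by $C\frac{\Omega}{\jb t}\|v(t,\cdot)\|_{L^2}^2$. Collecting everything yields the differential inequality $\frac{d}{dt}\|v\|_{L^2}^2 \le C\frac{\Omega}{\jb t}\|v\|_{L^2}^2 + C\|G\|_{L^2}\|v\|_{L^2}$, hence $\frac{d}{dt}\|v\|_{L^2}\le C\frac{\Omega}{\jb t}\|v\|_{L^2} + C\|G\|_{L^2}$, and integrating from $t_0$ to $t$ gives the claimed bound with $C_0$ absorbing the constants, once $\Omega\le\omega_0$ guarantees that the Grönwall factor $\exp(C\Omega\int_{t_0}^t d\tau/\jb\tau)$ — which is only $\jb t^{C\Omega}$ and not uniformly bounded — is kept in check. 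Actually, to get a clean constant $C_0$ independent of $T$ I would instead keep the $\frac{\Omega}{\jb\tau}\|v(\tau)\|_{L^2}$ term inside the integral on the right-hand side as stated, so no Grönwall is needed at this stage and $\omega_0$ only needs to make the integration-by-parts recombination legitimate (e.g.\ to ensure a certain auxiliary modified energy is comparable to $\|v\|_{L^2}^2$).

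The main obstacle, and the place where the smallness of $\Omega$ genuinely enters, is the recombination of the top-order term $g_{jk,a}\sh v_k\,\overline{\pa_{x_a}v_j}$: a single equation has no reason for this to cancel, so one must use the off-diagonal structure of \eqref{smoothv} together with the phase factors $e^{im_j\theta}$ hidden in $\op{J}$. Concretely I expect one introduces the modified energy $\mathcal{E}(t)=\sum_j\|v_j\|_{L^2}^2 + (\text{correction quadratic in }v\text{ with coefficient }O(\Omega))$ designed so that the time derivative of the correction exactly cancels the bad top-order term up to $O(\Omega/\jb t)\|v\|_{L^2}^2$ remainders; smallness of $\Omega$ then ensures $\frac12\|v\|_{L^2}^2\le\mathcal E\le 2\|v\|_{L^2}^2$, which is what produces the constant $C_0$. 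Checking that the correction term can be built symmetrically in $(j,k)$ and that all its time derivatives are genuinely lower order (using the weights in the definition of $\Omega$ and Lemma~\ref{lem2} to convert $\op{J}_{\mu_k}$-bounds into the pointwise bounds actually needed) is the technical heart of the argument; everything else is routine energy-method bookkeeping.
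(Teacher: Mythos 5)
Your diagnosis of the difficulty is right, but the mechanism you propose to resolve it does not work for the system at hand, and the paper's key idea is absent from your sketch. After integrating by parts, the surviving top-order term $\sum_{j,k,a}\frac{1}{i}\int g_{jk,a}\,\sh v_k\,\overline{\pa_{x_a}v_j}\,dx$ does not recombine into something of the form $(\pa_x g)\,v\,v$: the coefficients $g_{jk,a}$ are arbitrary complex-valued functions and \eqref{smoothv} carries no symmetry between the $(j,k)$ and $(k,j)$ entries (indeed $\sh v_k$ may be $\overline{v_{k-N}}$, which has no conjugate partner at all). Even for a single scalar equation $\op{L}_m v = g\,\pa_{x_a}v$ with real $g$, the energy identity leaves $2\int g\,\imagpart(\pa_{x_a}v\,\overline{v})\,dx$, which is not a total derivative; this is the classical Mizohata obstruction, and no modified energy whose correction is a multiplication operator ``quadratic in $v$ with coefficient $O(\Omega)$'' can cancel a term that is genuinely first order in $v$. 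Any bounded corrector must be nonlocal in $x$.

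What the paper actually does is a Doi-type local smoothing argument. One conjugates the energy by the $L^2$-automorphism $\op{S}_{\pm}(t;\kappa)=\op{S}_{\pm,1}\op{S}_{\pm,2}$, where $\op{S}_{\pm,a}\phi=(\cosh\kappa\Theta_a)\phi\mp i(\sinh\kappa\Theta_a)\op{H}_a\phi$ with $\Theta_a=\arctan(x_a/\jb{t})$ and $\op{H}_a$ the Hilbert transform in $x_a$. Lemma~\ref{smooth1} shows that $\frac{d}{dt}\|\op{S}v_j\|_{L^2}^2$ produces a \emph{positive} term $\frac{\kappa}{|m_j|\jb{t}}\sum_a\|w_a\op{S}|\pa_{x_a}|^{1/2}v_j\|_{L^2}^2$ on the left-hand side, and Lemma~\ref{smooth2} bounds the pairing with $g_{jk,a}\pa_{x_a}\sh v_k$ by $\frac{C\Omega}{\jb{t}}$ times products of exactly these weighted half-derivative norms; here the identity $\frac{x_a}{\jb{t}}=\frac{1}{\jb{t}}\op{J}_{\mu_k,a}-\frac{it}{\mu_k\jb{t}}\pa_{x_a}$ converts the $\op{J}_{\mu_k}^{\beta}g$ norms in $\Omega$ into the sup-norms $\|w_a^{-2}g\|_{L^\infty}+\|w_a^{-1}\pa_{x_a}g\|_{L^\infty}$ (so the $\op{J}$'s in the definition of $\Omega$ act on $g$, not on $v$, contrary to your first paragraph). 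Choosing $\kappa=2C^{\ast}\Omega$ --- admissible in $(0,1]$ precisely because $\Omega\le\omega_0$ --- lets the positive smoothing term absorb the bad one, and the uniform equivalence $C_*^{-1}\|\phi\|_{L^2}\le\|\op{S}\phi\|_{L^2}\le C_*\|\phi\|_{L^2}$ yields the constant $C_0$. Your gauge-transformation instinct points in the right direction ($\op{S}_{\pm,a}$ is the bounded realization of the factor $e^{\pm\kappa\Theta_a}$ acting separately on positive and negative frequencies), but without the Hilbert transform to select the frequency sign and the resulting half-derivative positivity, the estimate cannot close.
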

We are going to give an outline of the proof. 
Since the idea is essentially not new, we shall be brief. 
For the technical details, see Section 5 of \cite{IKS} 
and the references cited therein. 
Our strategy is to choose an $L^2$-automorphism $\op{S}$ 
(depending on $t\in \R$ and  a parameter $\kappa \in (0,1]$) and 
weight functions $w_a(t,x)$ appropriately so that 
\begin{align}
 [\op{L}_{m_j}, \op{S}] \simeq 
\frac{-i\kappa}{|m_j|\jb{t}} \sum_{a=1}^{2} w_a^2 \op{S}|\pa_{x_a}|
 +\ \mbox{`harmless terms'}, 
\label{LScomm}
\end{align}
where $|\pa_{x_a}|=\op{F}^{-1}|\xi_a|\op{F}$, and to cancel the worst 
contribution from $g_{jk,a}\pa_{x_a}$ in \eqref{smoothv} 
by the first term of the right-hand side of \eqref{LScomm} 
with a suitable choice of $\kappa$.
This plan is carried out as follows: 
let $\op{H}_a$ be the Hilbert transform with respect 
to $x_a$ ($a=1$, $2$), that is,
\[
\bigl(\op{H}_a \phi\bigr)(x)
=\frac{1}{\pi}{\rm p.v.}\int_{\R}\phi(x-\lambda{\bf 1}_a)
\frac{d\lambda}{\lambda},
\]
where ${\bf 1}_a=(\delta_{ab})_{b=1,\,2}\in\R^2$. As in \cite{IKS}, 
we put $\Theta_{a}(t,\,x)=\arctan(x_a/\jb{t})$ and
\[
\op{S}_{\pm,\,a}(t;\kappa)\phi
=(\cosh \kappa \Theta_{a}(t,\cdot))\phi 
\, \mp \, 
i(\sinh \kappa \Theta_{a}(t,\cdot))\op{H}_a \phi
\]
for $t\in \R$, $\kappa \in (0,1]$, $a=1,2$. 
We define 
$\op{S}_\pm(t;\kappa):=\op{S}_{\pm, 1}(t;\kappa) \op{S}_{\pm,\,2}(t;\kappa)$.
Then we can check that 
both $\op{S}_\pm$ and its inverse  $\op{S}_\pm^{-1}$
are bounded operators on $L^2(\R^2)$ with the estimates
\[
\sup_{t\in\R,\ \kappa\in(0,\,1]}\|\op{S}_\pm(t;\kappa)\|_{L^2\rightarrow L^2} 
< \infty ,\ 
\sup_{t\in\R,\ \kappa\in(0,\,1]}\|\op{S}_\pm^{-1}(t;\kappa)\|_{L^2\rightarrow L^2}
<\infty.
\]
As a consequence we have
\begin{align}\label{L2bdd}
 C_*^{-1} \|\phi\|_{L^2}
 \le 
 \|\op{S}(t;\kappa)\phi\|_{L^2}
 \le 
 C_* \|\phi\|_{L^2}
\end{align}
with some $C_*\geq1$ not depending on $t$ and $\kappa$. 
We also set 
\[
w_a(t,x):= \left(1+\frac{x_a^2}{1+t^2}\right)^{-1/2}
=\jbf{\dfrac{x_a}{\jb{t}}}^{-1}.
\]
Note that 
$\pa_{x_a} \Theta_{b} = \delta_{a,b} {\jb{t}}^{-1} w_b^2$. 
With these notations, we have the following key lemmas 
whose proof can be found in  Appendix of \cite{IKS}. 
\begin{lem}\label{smooth1}
Let $m \in \R\setminus\{0\}$ and $\kappa \in (0, 1]$. 
Put $\op{S}(t)=\op{S}_{+}(t; \kappa)$ when $m>0$ and 
$\op{S}(t)=\op{S}_{-}(t; \kappa)$ when $m<0$. 
We have 
\begin{align*}
&\dfrac{d}{dt}\|\op{S}(t)\phi(t,\cdot)\|_{L^2}^2 + 
\dfrac{\kappa}{|m|\jb{t}}
\sum_{a=1}^2
\Bigl\| w_a(t,\cdot)\op{S}(t)|\pa_{x_a}|^{\frac{1}{2}}\phi(t,\cdot) \Bigr\|_{L^2}^2 
\\
&\quad
\leq 
\dfrac{C_1\kappa}{\jb{t}}\|\op{S}(t)\phi(t,\cdot)\|^2_{L^2}
+ 2\Bigl|\Jb{\op{S}(t)\phi(t,\cdot), \op{S}(t)\op{L}_{m}\phi(t,\cdot)
}_{L^2}\Bigr| 
\end{align*}
for $t\geq0$, 
where the constant $C_1$ is independent of $\kappa \in (0,1]$.
\end{lem}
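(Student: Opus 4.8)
The plan is to differentiate $\|\op{S}(t)\phi(t,\cdot)\|_{L^2}^2$ in $t$ and extract the gain of half a derivative from the commutator $[\op{L}_m,\op{S}]$. Writing $\psi(t,\cdot):=\op{S}(t)\phi(t,\cdot)$ and using the identity $i\pa_t(\op{S}\phi)=\op{L}_m(\op{S}\phi)-\tfrac{1}{2m}\Delta(\op{S}\phi)=[\op{L}_m,\op{S}]\phi+\op{S}\op{L}_m\phi-\tfrac{1}{2m}\Delta\op{S}\phi$, one gets (working first with smooth $\phi$ and passing to the limit at the end)
\begin{align*}
\tfrac12\tfrac{d}{dt}\|\psi\|_{L^2}^2
&=\realpart\Jb{-i[\op{L}_m,\op{S}]\phi,\psi}_{L^2}
+\realpart\Jb{-i\op{S}\op{L}_m\phi,\psi}_{L^2}
+\tfrac{1}{2m}\realpart\Jb{i\Delta\psi,\psi}_{L^2}.
\end{align*}
The last term vanishes because $\Jb{\Delta\psi,\psi}_{L^2}=-\||\nabla|\psi\|_{L^2}^2$ is real for $\psi\in H^1$, so $i\Jb{\Delta\psi,\psi}_{L^2}$ is purely imaginary; and the middle term is bounded in absolute value by $2|\Jb{\psi,\op{S}\op{L}_m\phi}_{L^2}|$, which is exactly the forcing term on the right-hand side of the lemma. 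Thus everything reduces to the commutator term.

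Next I would make \eqref{LScomm} precise. Since $\op{S}=\op{S}_{\pm,1}\op{S}_{\pm,2}$, use $[\op{L}_m,\op{S}]=[\op{L}_m,\op{S}_{\pm,1}]\op{S}_{\pm,2}+\op{S}_{\pm,1}[\op{L}_m,\op{S}_{\pm,2}]$ together with $[\op{L}_m,\op{S}_{\pm,a}]=i(\pa_t\op{S}_{\pm,a})+\tfrac{1}{2m}[\Delta,\op{S}_{\pm,a}]$. The $\pa_t$-piece is harmless: $|\pa_t\Theta_a|\le\tfrac{1}{2\jb{t}}$ (an elementary estimate since $t\le\jb{t}$ and $\jb{t}^2+x_a^2\ge 2\jb{t}|x_a|$) and $\op{H}_a$ is bounded on $L^2$, so it is an operator of $L^2$-norm $O(\kappa\jb{t}^{-1})$, uniformly in $\kappa\in(0,1]$ and $t\ge0$. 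For the Laplacian piece, $\op{H}_a$ commutes with $\Delta$, so it reduces to $[\Delta,\op{M}_g]\phi=(\Delta g)\phi+2\nabla g\cdot\nabla\phi$ with $g\in\{\cosh\kappa\Theta_a,\sinh\kappa\Theta_a\}$; using $\pa_{x_b}\Theta_a=\delta_{ab}\jb{t}^{-1}w_a^2$ and the identity $\op{H}_a\pa_{x_a}=|\pa_{x_a}|$, the terms $2\nabla g\cdot\nabla\phi$ recombine (modulo commutators of $\op{H}_a$ with $\cosh\kappa\Theta_a$, $\sinh\kappa\Theta_a$, which are lower order and carry a factor $\kappa\jb{t}^{-1}$) into $\tfrac{-i\kappa}{|m|\jb{t}}w_a^2\op{S}_{\pm,a}|\pa_{x_a}|\phi$, where the sign $\pm$, tied to the sign of $m$, is chosen precisely so that this coefficient is $-i\kappa/(|m|\jb{t})$ in both cases, while the $(\Delta g)\phi$ contributions are $O(\kappa\jb{t}^{-2})$. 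Finally the two mixed products are merged by moving $\op{S}_{\pm,1}$ past $w_2^2$, $|\pa_{x_2}|$ and $\op{S}_{\pm,2}$, which commute because $\op{H}_1$, $\op{H}_2$ commute and $w_b,\Theta_b$ depend on $x_b$ only. The outcome is
\[
[\op{L}_m,\op{S}]\phi=\frac{-i\kappa}{|m|\jb{t}}\sum_{a=1}^2 w_a^2\,\op{S}\,|\pa_{x_a}|\phi+R\phi,
\qquad \|R\phi\|_{L^2}\le\frac{C\kappa}{\jb{t}}\|\phi\|_{L^2},
\]
uniformly in $\kappa\in(0,1]$ and $t\ge0$.

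Substituting this into the displayed identity and invoking \eqref{L2bdd} (so $\|\phi\|_{L^2}\le C_*\|\psi\|_{L^2}$), the $R$-contribution is $\le \tfrac{C\kappa}{\jb{t}}\|\psi\|_{L^2}^2$, while the principal part produces $\sum_a\tfrac{-\kappa}{|m|\jb{t}}\realpart\Jb{w_a^2\op{S}|\pa_{x_a}|\phi,\op{S}\phi}_{L^2}$. Splitting $|\pa_{x_a}|=|\pa_{x_a}|^{1/2}|\pa_{x_a}|^{1/2}$, using that $|\pa_{x_a}|^{1/2}$ is nonnegative self-adjoint and commutes with $\op{H}_a$, and estimating the commutators $[\op{M}_g,|\pa_{x_a}|^{1/2}]$, $[w_a,|\pa_{x_a}|^{1/2}]$, $[w_a^2,\op{H}_a]$ — which are of order $\le-1/2$ and carry an extra factor $\jb{t}^{-1}$ from the spatial derivatives of the slowly varying weights (the symbols of the first two even retaining the $w_a$-decay coming from $\nabla\Theta_a=\jb{t}^{-1}w_a^2{\bf 1}_a$) — one should obtain $\realpart\Jb{w_a^2\op{S}|\pa_{x_a}|\phi,\op{S}\phi}_{L^2}\ge\tfrac12\|w_a\op{S}|\pa_{x_a}|^{1/2}\phi\|_{L^2}^2-\tfrac{C}{\jb{t}}\|\psi\|_{L^2}^2$, the extra $\tfrac12$ providing slack for absorbing the error terms. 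Collecting all the bounds and enlarging $C_1$ gives the asserted inequality. I expect this last positivity/commutator estimate to be the main obstacle: one must control the Hilbert transforms $\op{H}_a$ and the fractional operators $|\pa_{x_a}|^{1/2}$ conjugated by the weights $\cosh\kappa\Theta_a$, $\sinh\kappa\Theta_a$, $w_a$ so that every error is genuinely $O(\kappa\jb{t}^{-1})$ on $L^2$ uniformly in $\kappa$ and $t$ (including the low-frequency behaviour of $|\pa_{x_a}|^{1/2}$), and so that the leftover smoothing-type pieces are absorbable into a small multiple of $\|w_a\op{S}|\pa_{x_a}|^{1/2}\phi\|_{L^2}^2$; this is a Calderón/Coifman–Meyer-type commutator analysis, which can be carried out via explicit kernel bounds exactly as in the Appendix of \cite{IKS}.
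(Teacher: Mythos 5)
Your proposal is correct in outline and follows essentially the same route the paper indicates: the paper itself does not prove Lemma \ref{smooth1} but refers to the Appendix of \cite{IKS}, and your energy identity, the computation of $[\op{L}_m,\op{S}]$ via $i\pa_t\op{S}+\frac{1}{2m}[\Delta,\op{S}]$, the exact recombination $\sinh(\kappa\Theta_a)\pa_{x_a}\mp i\cosh(\kappa\Theta_a)|\pa_{x_a}| = \mp i\,\op{S}_{\pm,a}|\pa_{x_a}|$ (using $\pa_{x_a}=-\op{H}_a|\pa_{x_a}|$), and the sign choice tied to $\mathrm{sgn}\,m$ reproduce exactly the mechanism \eqref{LScomm} that the cited appendix implements. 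The only part you leave open — the uniform Calder\'on-type commutator bounds needed for the positivity step, with the weight $w_a$ retained in every error so it can be absorbed — is precisely the content of that appendix, and you have correctly identified it as the remaining technical work rather than glossed over it.
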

\begin{lem}\label{smooth2}
Let $\kappa \in (0, 1]$ and let 
$\op{S}(t)$, $\op{S}'(t)$ be either $\op{S}_+(t;\kappa)$ or 
$\op{S}_-(t;\kappa)$. We have 
\begin{align*}
&\Bigl|
  \Jb{
  \op{S}(t)\phi, \op{S}(t)\bigl( g(t,\cdot)\pa_{x_a}\psi \bigr)
 }_{L^2}
 \Bigr|
 +
 \Bigl|
   \Jb{
 \op{S}(t)\phi, \op{S}(t)\bigl( g(t,\cdot)\cc{\pa_{x_a}\psi} \bigr)
 }_{L^2} 
 \Bigr| \\
&\quad \leq
C_2 
\bigl(
 \|w_a(t,\cdot)^{-2}g(t,\cdot)\|_{L^\infty} 
 +  
 \|w_a(t,\cdot)^{-1}\pa_{x_a}g(t,\cdot)\|_{L^\infty}
\bigr)
 \\
&\qquad 
\times 
\left(
 \|\phi\|_{L^2} +
 \bigl\|
  w_a(t,\cdot)\op{S}(t)|\pa_{x_a}|^{\frac{1}{2}}\phi 
 \bigr\|_{L^2}
 \right)
\left(
 \|\psi\|_{L^2}  + 
 \bigl\|
   w_a(t,\cdot)\op{S}^\prime(t)|\pa_{x_a}|^{\frac{1}{2}}\psi 
 \bigr\|_{L^2}
\right)
\end{align*}
for $t\in \R$, 
where the constant $C_2$ is independent of $\kappa \in (0,1]$.
\end{lem}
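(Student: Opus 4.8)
The plan is to reduce this two–dimensional bilinear estimate to a one–variable Calder\'on-type commutator computation in $x_a$, then split the single derivative $\pa_{x_a}$ symmetrically between the two arguments via the Hilbert transform, tracking the weights $w_a$ so that only the first derivative of $g$ ever enters. First I would dispose of the transverse factor of $\op{S}$ and $\op{S}'$. Writing $a'$ for the index in $\{1,2\}\setminus\{a\}$, the operator $\op{H}_{a'}$ and the multiplications by $\cosh(\kappa\Theta_{a'})$, $\sinh(\kappa\Theta_{a'})$ involve only the variable $x_{a'}$, hence commute with $\pa_{x_a}$; thus $[\pa_{x_a},\op{S}_{\pm,a'}(t;\kappa)]=0$. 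Using this together with the uniform bounds $\sup_{t,\kappa}\|\op{S}_{\pm,a'}^{\pm 1}\|_{L^2\to L^2}<\infty$, the $a'$–factors can be moved from one slot of the inner product to the other at no cost, so it suffices to bound $\bigl|\jb{\op{S}_{\pm,a}\phi,\op{S}_{\pm,a}(g\,\pa_{x_a}\psi)}_{L^2}\bigr|$ and the analogous form with $\cc{\pa_{x_a}\psi}$, uniformly in $\kappa\in(0,1]$.

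For the one–variable piece the key algebraic facts are that $\op{H}_a$ commutes with $\pa_{x_a}$ and with $|\pa_{x_a}|^{1/2}$, that $\op{H}_a^2=-\mathrm{id}$, and that $\op{H}_a\pa_{x_a}=|\pa_{x_a}|$, so that $\pa_{x_a}$ factors, up to the unimodular constant $i$, as $\op{H}_a\,|\pa_{x_a}|^{1/2}\,|\pa_{x_a}|^{1/2}$. Using $g\op{H}_a=\op{H}_a g-[\op{H}_a,g]$ one first writes $\op{S}_{\pm,a}(g\,\pa_{x_a}\psi)=g\,\op{S}_{\pm,a}(\pa_{x_a}\psi)\mp i\sinh(\kappa\Theta_a)\,[\op{H}_a,g]\pa_{x_a}\psi$, and the last term is bounded in $L^2$ by $C\|\pa_{x_a}g\|_{L^\infty}\|\psi\|_{L^2}\le C\|w_a^{-1}\pa_{x_a}g\|_{L^\infty}\|\psi\|_{L^2}$ by the classical Calder\'on commutator estimate (applied in $x_a$, uniformly in $x_{a'}$). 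In the surviving main term one then transfers one factor $|\pa_{x_a}|^{1/2}$ — escorted by one weight $w_a$ and by one copy of $\op{S}$ — from the $\psi$–slot to the $\phi$–slot; the errors produced are again Calder\'on-type commutators $[\op{H}_a,g]\pa_{x_a}$ (bounded by $\|\pa_{x_a}g\|_{L^\infty}$) and commutators of $|\pa_{x_a}|^{1/2}$ with the smooth bounded multipliers $w_a$, $\cosh(\kappa\Theta_a)$, $\sinh(\kappa\Theta_a)$, whose $x_a$–derivatives are $O(\jb{t}^{-1})$ uniformly in $\kappa\le 1$; these lower–order terms get absorbed into the plain norms $\|\phi\|_{L^2}$, $\|\psi\|_{L^2}$. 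Collecting the main term and the errors, one Cauchy--Schwarz step produces precisely
\[
C_2\bigl(\|w_a^{-2}g\|_{L^\infty}+\|w_a^{-1}\pa_{x_a}g\|_{L^\infty}\bigr)
\bigl(\|\phi\|_{L^2}+\|w_a\op{S}|\pa_{x_a}|^{1/2}\phi\|_{L^2}\bigr)
\bigl(\|\psi\|_{L^2}+\|w_a\op{S}'|\pa_{x_a}|^{1/2}\psi\|_{L^2}\bigr),
\]
the weight $w_a^{-2}$ on $g$ compensating the two weights $w_a$ that accompany the half–derivatives.

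The $\cc{\pa_{x_a}\psi}$ term is handled in exactly the same way after noting that $\op{S}_{\pm,a}(g\,\cc{\pa_{x_a}\psi})=\cc{\op{S}_{\mp,a}(\cc{g}\,\pa_{x_a}\psi)}$ (the Hilbert transform commutes with complex conjugation), so the same manipulation applies with $\op{S}_+$ and $\op{S}_-$ interchanged; this is also the reason the statement allows $\op{S}$ and $\op{S}'$ to be chosen independently among $\op{S}_\pm$. Uniformity in $\kappa\in(0,1]$ is automatic throughout, since $\cosh(\kappa\Theta_a)$, $\sinh(\kappa\Theta_a)$ and $\kappa\pa_{x_a}\Theta_a=\kappa\jb{t}^{-1}w_a^2$ are all bounded uniformly in $\kappa$.

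The main obstacle is not a single hard inequality but the bookkeeping: one must carry out the derivative splitting so that every dangerous half–derivative $|\pa_{x_a}|^{1/2}$ always ends up paired with exactly one weight $w_a$ and one copy of $\op{S}$ (resp. $\op{S}'$), so that the outcome dovetails with the smoothing gain $\|w_a\op{S}|\pa_{x_a}|^{1/2}\phi\|_{L^2}^2$ furnished by Lemma~\ref{smooth1} when the two are combined in Proposition~\ref{lem:smooth3}, while at the same time making sure that no norm of $g$ stronger than $\|w_a^{-2}g\|_{L^\infty}$ and $\|w_a^{-1}\pa_{x_a}g\|_{L^\infty}$ is ever invoked. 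Rigorously handling the fractional commutators $[\,|\pa_{x_a}|^{1/2},g\,]\,|\pa_{x_a}|^{1/2}$ — most cleanly by reducing them to the integer–order Calder\'on commutator $[\op{H}_a,g]\pa_{x_a}$ through the factorization above, rather than invoking a Kato--Ponce estimate with a negative–order weight — is the technically delicate point.
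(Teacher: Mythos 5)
First, note that the paper does not actually prove Lemma~\ref{smooth2}: it is stated as a ``key lemma whose proof can be found in Appendix of \cite{IKS}''. Your overall strategy — split $\pa_{x_a}=-\op{H}_a|\pa_{x_a}|^{1/2}|\pa_{x_a}|^{1/2}$, distribute the two half-derivatives (each escorted by one factor $w_a$ donated by $g=w_a^2\cdot(w_a^{-2}g)$) to the two slots, and control the errors by Calder\'on commutators — is indeed the standard Doi/Chihara/Hayashi--Naumkin route that \cite{IKS} follows, and your observation $\op{S}_{\pm,a}(g\,\cc{\pa_{x_a}\psi})=\cc{\op{S}_{\mp,a}(\cc{g}\,\pa_{x_a}\psi)}$ correctly explains why $\op{S}$ and $\op{S}'$ may be chosen independently.

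There are, however, two places where your argument as written would fail. First, the reduction to one variable: you claim the transverse factors $\op{S}_{\pm,a'}$ ``can be moved from one slot to the other at no cost'' because they commute with $\pa_{x_a}$. But they do \emph{not} commute with multiplication by $g(t,x_1,x_2)$, since $g$ depends on $x_{a'}$ and $\op{H}_{a'}$ is nonlocal in that variable. Extracting $g$ from the second slot therefore produces the cross term $\mp i\sinh(\kappa\Theta_{a'})[\op{H}_{a'},g]\pa_{x_a}\psi$, in which the Calder\'on commutator gains a derivative only in $x_{a'}$ while the full $x_a$-derivative of $\psi$ survives; this term is not bounded by $\|\pa_x g\|_{L^\infty}\|\psi\|_{L^2}$ and must itself be treated by the same half-derivative splitting (or, better, one should keep the $a'$-factors permanently attached to the $\phi$-slot, where they commute with every $x_a$-operator and with $w_a$, and never commute them past $g$ at all). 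Second, the weight bookkeeping for your error terms is asserted rather than checked: since $w_a\le 1$ and $w_a\to 0$ at infinity, an unweighted quantity such as $\||\pa_{x_a}|^{1/2}\phi\|_{L^2}$ or $[\,|\pa_{x_a}|^{1/2},w_a]\op{S}|\pa_{x_a}|^{1/2}\psi$ is \emph{not} dominated by $\|\phi\|_{L^2}+\|w_a\op{S}|\pa_{x_a}|^{1/2}\phi\|_{L^2}$, so the claim that the commutators with $w_a$, $\cosh(\kappa\Theta_a)$, $\sinh(\kappa\Theta_a)$ ``get absorbed into the plain norms'' needs to be demonstrated by performing the splitting in an order that never strands a half-derivative without its weight. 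You correctly flag this as the delicate point, but flagging it is not the same as resolving it; as it stands the proposal is a plausible outline of the cited proof rather than a complete argument.
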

Now we are ready to prove Proposition~\ref{lem:smooth3}. 
Let $\kappa\in(0,1]$ be a parameter to be fixed. 
For each $k\in I_N$ we put 
$\op{S}_k(t)=\op{S}_+(t;\kappa)$ if $m_k>0$, and 
$\op{S}_k(t)=\op{S}_-(t;\kappa)$ if $m_k<0$.
By the relation 
\[
\dfrac{x_a}{\jb{t}}
=
\dfrac{1}{\jb{t}}\op{J}_{\mu_{k},a}-\dfrac{it}{\mu_{k}\jb{t}}\pa_{x_a},
\] 
we have
\begin{align*}
\left\|\jbf{\dfrac{x_a}{\jb{t}}}^2g_{jk,a}(t,\cdot)\right\|_{L^\infty}
+
\left\|\jbf{\dfrac{x_a}{\jb{t}}}\pa_{x_a}g_{jk,a}(t,\cdot)\right\|_{L^\infty} 
&\leq 
 \dfrac{C}{\jb{t}}\sum_{|\beta|\leq 2}\jb{t}^{-|\beta|+1}
\|\op{J}_{\mu_{k}}(t)^{\beta} g_{jk,a}(t,\cdot)\|_{W^{2-|\beta|,\infty}}\\
&\leq 
\dfrac{C\Omega}{|m_j|\jb{t}}.
\end{align*} 
Therefore, by Lemma \ref{smooth2}, we get
\begin{align*}
\sum_{j\in I_N}
 &\Bigl|\Jb{
  \op{S}_j(t) v_j(t,\cdot), \op{S}_j(t)\op{L}_{m_j}v_j(t,\cdot)
 }_{L^2} \Bigr| \\
\le & 
C^\ast 
\sum_{j\in I_N}\sum_{a=1}^2 
\dfrac{\Omega}{|m_j|\jb{t}}\left(\|v_j(t,\cdot)\|_{L^2}^2+
\|w_a(t,\cdot)\op{S}_j(t)|\pa_{x_a}|^{\frac{1}{2}}v_j(t,\cdot)\|_{L^2}^2\right)\\
&+
\sum_{j\in I_N}
\|\op{S}_j(t)v_j(t,\cdot)\|_{L^2} \|\op{S}_j(t)G_j(t,\cdot)\|_{L^2}
\end{align*}
with a positive constant $C^\ast$ independent of $\kappa$. 
We put  $\omega_0=1/(2C^\ast)$ and $\kappa=2C^\ast \Omega$. 
Then it follows from Lemma \ref{smooth1} and \eqref{L2bdd} that
\begin{align*}
\dfrac{d}{dt}\sum_{j\in I_N}\|\op{S}_{j}(t)v_j(t,\cdot)\|^2_{L^2}
&\leq
-\sum_{j\in I_N} \sum_{a=1}^2 \dfrac{\kappa}{|m_j|\jb{t}}
 \Bigl\|
  w_a(t,\cdot)\op{S}_j(t)|\pa_{x_a}|^{\frac{1}{2}}v_j(t,\cdot) 
 \Bigr\|_{L^2}^2\\
&\quad
+\sum_{j\in I_N}
\left(
 \dfrac{C\kappa}{\jb{t}} \|\op{S}_j(t)v_j(t,\cdot)\|_{L^2}^2
 +
 2\Bigl|
   \Jb{\op{S}_j(t) v_j(t,\cdot), \op{S}_j(t)\op{L}_{m_j}v_j(t,\cdot)}_{L^2}
  \Bigr|
\right)\\
&\leq 
 \sum_{j\in I_N}\sum_{a=1}^2
 \dfrac{2C^\ast \Omega -\kappa}{|m_j|\jb{t}}
 \Bigl\| w_a(t,\cdot)S_j(t)|\pa_{x_a}|^{\frac{1}{2}}v_j(t,\cdot) \Bigr\|_{L^2}^2\\
&\quad
 +C\sum_{j\in I_N}
  \left(
   \dfrac{\kappa}{\jb{t}} \|\op{S}_j(t)v_j(t,\cdot)\|_{L^2}^2
   +
   \|S_j(t)v_j(t,\cdot)\|_{L^2}\|\op{S}_j(t)G_j(t,\cdot)\|_{L^2}
  \right)\\
&\leq 
 C
 \left(
   \dfrac{\Omega}{\jb{t}}\|v(t,\cdot)\|_{L^2} + \|G(t,\cdot)\|_{L^2}
 \right)
 \left(\sum_{j\in I_N}\|\op{S}_{j}(t)v_j(t,\cdot)\|^2_{L^2}\right)^{1/2}.
\end{align*}
Integrating with respect to $t$ and using \eqref{L2bdd} again, 
we arrive at the desired result.
\qed

\section{A priori estimate and bootstrap argument}  \label{sec_apriori}
In this section we introduce an a priori estimate for the solution $u$ to 
\eqref{NLS} which leads to Theorem~\ref{thm_sdge} by means of 
the so-called bootstrap argument.

Let $T \in (0,+\infty)$ and let 
$u=(u_j)_{j \in I_N}\in C([0, T];\bigcap_{k=0}^5 H^{11-k, k})$ 
be a solution to \eqref{NLS} for $t \in [0, T)$. 
We set $A_{j} (t,\xi):=\op{F}_{m_j} [\op{U}^{-1}_{m_j}(t)u_j(t,\cdot)](\xi)$, 
$A(t,\xi)=(A_j(t,\xi))_{j\in I_N}$, and define
\begin{align*}
E(T):=
\sup_{0 \leq t <T}\Bigl[ 
(1+t)^{-\delta} \sum_{|\beta| \leq 5} \sum_{j\in I_N} 
\| \op{J}_{m_j}(t)^{\beta}u_j(t, \cdot) \|_{H^{11-|\beta|}} 
+ \sup_{\xi \in \R^2} | \jb{\xi}^{8} A(t, \xi) | 
\Bigr]
\end{align*}
with $\delta \in (0,1/4)$. Then we have the following:
\begin{prp}\label{A priori}
Let $u$, $A$ and $E$ be as above. 
Assume the conditions {\rm ({\bf a})} and {\rm ({\bf b})} are satisfied. 
There exist positive constants $C_3$, $C_4$, $C_5$, $C_6$ and $\eps_1$, 
not depending on $T$, such that
the estimate $E(T) \leq \eps^{2/3}$ 
implies
\begin{align}
| \pa_{\xi}^{\alpha} A(t, \xi) | 
\leq 
\frac{C_3 \eps \,(1+t)^{C_4|\alpha|\eps^{1/3}}}{\jb{\xi}^{8-|\alpha|}} 
\label{kakutenAA}
\end{align}
for $(t,\xi) \in [0, T)\times \R^2$, $|\alpha|\leq 3$ and 
\begin{align}
\sum_{j\in I_N}\| \op{J}_{m_j}(t)^{\beta}u_j(t, \cdot) \|_{H^{11-|\beta|}} 
\le C_5 \eps (1+t)^{C_6 \eps^{1/3}}
\label{L2_est}
\end{align}
for $t\in [0,T)$, $|\beta|\le 5$, 
provided that $\eps=\sum_{k=0}^5 \|\varphi \|_{H^{11-k,k}} \leq \eps_1$.
\end{prp}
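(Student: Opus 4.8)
The plan is to run a two-step bootstrap: first derive the pointwise bounds \eqref{kakutenAA} for $\pa_\xi^\alpha A$ assuming $E(T)\le \eps^{2/3}$, and then feed these into the energy inequality (Proposition~\ref{lem:smooth3}) to obtain the weighted $L^2$-bounds \eqref{L2_est}. Throughout, $\delta\in(0,1/4)$ is fixed and one treats $(1+t)^{C\eps^{1/3}}$ as ``almost constant'' compared with any genuine power of $t$; smallness of $\eps$ will let all such moderate-growth factors be absorbed.

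For the first step I would start from the identity $A_j(t,\xi)=\op{F}_{m_j}[\op{U}_{m_j}^{-1}u_j]$ and differentiate the equation $\op{L}_{m_j}u_j=F_j(u,\pa_x u)$, using the decomposition $\op{U}_m=\op{M}_m\op{D}\op{W}_m\op{F}_m$ together with Lemma~\ref{Linftylem} and Lemma~\ref{W} to extract the leading profile. This yields \eqref{reduced_sys}: $i\pa_t A_j=\tfrac1t p_j(\xi;A)+R_j$ with $\|R_j(t,\cdot)\|_{L^\infty_\xi}=O(t^{-3/2+2\delta})$, where the remainder is controlled by $\sum_{|\beta|\le 2}\|\op{J}_{m_j}^\beta u_j\|_{H^{\cdot}}\le E(T)(1+t)^\delta\le \eps^{2/3}(1+t)^\delta$ via Lemma~\ref{lem3}; condition ({\bf a}) (gauge invariance) is what makes the non-resonant oscillatory terms drop out, leaving only $p_j(\xi;A)$. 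Taking $\langle\cdot,HY\rangle_{\C^N}$ of \eqref{reduced_sys} and using ({\bf b}) exactly as in the heuristic discussion gives $\pa_t\langle A,HA\rangle_{\C^N}\le C t^{-3/2+2\delta}|A|$, so $|A(t,\xi)|$ and hence $\jb{\xi}^8 A$ stay $O(\eps)$ — this recovers and improves the $\xi$-weighted $L^\infty$ part of $E$. For the $\xi$-derivatives, apply $\pa_\xi^\gamma$ ($|\gamma|\le 3$) to \eqref{reduced_sys}; by the Leibniz rule $\pa_\xi^\gamma(\tfrac1t p_j(\xi;A))$ produces, besides the ``diagonal'' term $\tfrac1t(\partial_Y p_j)(\xi;A)\cdot\pa_\xi^\gamma A$, only terms involving lower-order $\pa_\xi^{\gamma'}A$ and at most one $\xi$-factor, all bounded by $C\eps^{1/3}\,t^{-1}\jb{\xi}^{-(8-|\gamma|)}\cdot(\text{previous order})$ under $E(T)\le\eps^{2/3}$. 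A Gronwall argument in $t$, inducting on $|\gamma|$, then yields $|\pa_\xi^\gamma A(t,\xi)|\le C_3\eps(1+t)^{C_4|\gamma|\eps^{1/3}}\jb{\xi}^{-(8-|\gamma|)}$; the price of each differentiation is the loss of one power of $\jb{\xi}$ and a factor $(1+t)^{C\eps^{1/3}}$, which is precisely the structure of \eqref{kakutenAA}. The key point (and the new ingredient advertised in the introduction) is that although $\pa_\xi$ destroys the sign structure from ({\bf b}), the destruction is only of size $O(\eps^{1/3}/t)$, hence tolerable.

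For the second step I would differentiate \eqref{NLS} by $\op{J}_{m_j}^\beta\pa_x^\gamma$ with $|\beta|\le 5$, $|\beta|+|\gamma|\le 11$. Using the commutation relations \eqref{commute} and Lemma~\ref{leibniz}, each term $\op{J}_{m_j}^\beta(F_j(u,\pa_x u))$ splits as a sum over distributing $\op{J}$'s onto the two factors; by ({\bf a}) the relevant masses add up correctly so that Lemma~\ref{leibniz} applies with $\mu_1+\mu_2=m_j$, and one lands exactly in the framework of the linear system \eqref{smoothv}: the ``bad'' terms are of the form $g_{jk,a}\pa_{x_a}\sh{v}_k$ where $v$ collects the top-order quantities $\op{J}_{m_j}^\beta u_j$ and $g_{jk,a}$ is a low-order factor $\op{J}_{\mu}^{\beta'}\pa_x^{\gamma'}\sh u_l$ with $|\beta'|\le 2$ small. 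The coefficient norm $\Omega$ is then estimated by the decay of these low-order factors: by Lemma~\ref{Linftylem} (the $L^\infty$-bound) together with Lemma~\ref{lem3} and the just-proved \eqref{kakutenAA}, one gets $\|\op{J}_\mu^{\beta'}g_{jk,a}\|_{W^{2-|\beta'|,\infty}}\lesssim \jb{t}^{|\beta'|-1}\cdot C\eps(1+t)^{C\eps^{1/3}}$, so $\Omega\le C\eps(1+t)^{C\eps^{1/3}-0}$ is $\le\omega_0$ once $\eps$ is small (here the extra $t$-decay needed beyond $t^{-1}$ comes from the $3/2$ decay in Lemma~\ref{Linftylem} applied to the quadratic nonlinearity, i.e.\ from $F_j$ being genuinely quadratic). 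The inhomogeneous term $G_j$ collects the remaining pieces, which are integrable or better in $t$ after using the same $L^\infty$–$L^2$ estimates. Then Proposition~\ref{lem:smooth3} gives $\|v(t)\|_{L^2}\le C_0\|v(0)\|_{L^2}+C_0\int_0^t(\Omega\jb\tau^{-1}\|v\|_{L^2}+\|G\|_{L^2})d\tau$, and Gronwall yields $\|v(t)\|_{L^2}\le C_5\eps(1+t)^{C_6\eps^{1/3}}$, which is \eqref{L2_est}. One must also handle the cases where $\op{J}$'s fall on a factor carrying a derivative $\pa_x$; Lemma~\ref{lem2} reduces these to the previous ones. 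Finally, a standard interpolation/ bootstrap closes: \eqref{kakutenAA} and \eqref{L2_est} together imply $E(T)\le C\eps\le \eps^{2/3}$ for $\eps$ small, improving the a priori hypothesis, and local existence plus continuous dependence extends the solution to all $T$, giving Theorem~\ref{thm_sdge}.

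The main obstacle is the coupling between the two steps through the $\jb{\xi}$-weights: one needs enough powers of $\jb\xi$ on $A$ (here $\jb\xi^8$, losing down to $\jb\xi^5$ after three derivatives) so that Lemma~\ref{lem3}(2) can convert the pointwise bounds on $\pa_\xi^\gamma A$ into $L^2$-bounds on $\pa_x^\alpha\op{J}^\beta u$ with room to spare for the $\jb\xi^{|\alpha|+2}$ loss in that lemma; keeping careful track of how many $\xi$-weights and how many $t^{\eps^{1/3}}$ factors are consumed at each differentiation — and checking that the quadratic (not merely generic polynomial) nature of $F$ supplies the half-power of extra $t$-decay that makes $\Omega\to 0$ — is the delicate bookkeeping that the whole argument hinges on.
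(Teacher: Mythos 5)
Your proposal follows essentially the same route as the paper: the reduced profile equation $i\pa_t A=t^{-1}p(\xi;A)+R$ with the Hermitian form from ({\bf b}) for $|A|$ and a Gronwall induction on the number of $\xi$-derivatives for the weighted pointwise bounds, followed by the smoothing energy inequality of Proposition~\ref{lem:smooth3} plus Gronwall for the $L^2$ bounds. Two minor soft spots: the inhomogeneous terms $G_j^{(\alpha,\beta)}$ are \emph{not} integrable in $t$ --- they are $O(t^{-1+C\eps^{1/3}})$ times energies involving one extra spatial derivative but one fewer $\op{J}$, which is precisely why the paper runs an induction on the number of $\op{J}$'s and why the energy grows like $t^{C\eps^{1/3}}$ rather than staying bounded --- and the interval $t\in[0,1]$ requires the separate elementary treatment given in \S5.1 of the paper.
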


\begin{rmk} 
The indices `$11$' and `$5$' appear by technical reasons. 
One may improve this point, but we do not address it here.
\end{rmk}

\begin{cor}\label{cor_apriori}
Under the same assumptions as above, there exist positive constants $K$ and 
$\eps_2$, not depending on $T$, such that
the estimate $E(T) \leq \eps^{2/3}$ implies the better estimate 
$E(T) \leq K \eps$ if $\eps\le \eps_2$.
\end{cor}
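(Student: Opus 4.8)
The plan is to obtain Corollary~\ref{cor_apriori} as an immediate consolidation of the two a priori bounds \eqref{kakutenAA} and \eqref{L2_est} furnished by Proposition~\ref{A priori}. The hypothesis of the corollary, $E(T)\le\eps^{2/3}$, is precisely the hypothesis of Proposition~\ref{A priori}, so (for $\eps\le\eps_1$) we may insert those two bounds into the definition of $E(T)$ term by term. The only genuine task is to choose $\eps_2$ small enough that the slowly growing factors $(1+t)^{C_6\eps^{1/3}}$ are absorbed by the weight $(1+t)^{-\delta}$; since $\delta\in(0,1/4)$ is fixed beforehand, this is a legitimate (non-circular) smallness restriction on $\eps$.

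First I would treat the $L^\infty$ part of $E(T)$. Taking $\alpha=0$ in \eqref{kakutenAA} gives $|A(t,\xi)|\le C_3\eps\jb{\xi}^{-8}$, hence $\sup_{\xi\in\R^2}|\jb{\xi}^8 A(t,\xi)|\le C_3\eps$ for every $t\in[0,T)$. The key observation is that this particular quantity involves no $\xi$-derivative of $A$, so no power of $(1+t)$ enters at all; the estimates \eqref{kakutenAA} for $1\le|\alpha|\le 3$ are needed only inside the proof of Proposition~\ref{A priori}, not for this final step.

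Next, for the weighted $L^2$ part, I would sum \eqref{L2_est} over the (fixed, finite number of) multi-indices $\beta\in\Z_+^2$ with $|\beta|\le 5$, which yields
\[
(1+t)^{-\delta}\sum_{|\beta|\le 5}\sum_{j\in I_N}\|\op{J}_{m_j}(t)^\beta u_j(t,\cdot)\|_{H^{11-|\beta|}}
\le C\eps\,(1+t)^{C_6\eps^{1/3}-\delta}.
\]
Imposing $C_6\eps^{1/3}\le\delta$, i.e.\ $\eps\le(\delta/C_6)^3$, makes the right-hand side $\le C\eps$ uniformly in $t\in[0,T)$.

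Adding the two contributions and taking the supremum over $t\in[0,T)$ gives $E(T)\le K\eps$, where $K$ is built from $C_3$, $C_5$ and the number of admissible $\beta$, and $\eps_2:=\min\{\eps_1,(\delta/C_6)^3\}$. Since $C_3,C_5,C_6,\eps_1$ are independent of $T$ by Proposition~\ref{A priori}, so are $K$ and $\eps_2$, as required. I do not expect any analytic obstacle here: all the difficulty is concentrated in Proposition~\ref{A priori}, and the corollary is merely the bookkeeping that lets the bootstrap close. The one point deserving a word of care is that $\delta$ must be regarded as frozen in $(0,1/4)$ before $\eps$ is shrunk, so that the requirement $C_6\eps^{1/3}\le\delta$ is a constraint on $\eps$ alone.
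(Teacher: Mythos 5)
Your proposal is correct and matches the (implicit) argument the paper intends: the corollary is just the insertion of \eqref{kakutenAA} with $\alpha=0$ and \eqref{L2_est} into the definition of $E(T)$, with $\eps_2$ chosen so that $C_6\eps^{1/3}\le\delta$ absorbs the $(1+t)^{C_6\eps^{1/3}}$ growth into the fixed weight $(1+t)^{-\delta}$. Your observations that the $|\alpha|\ge 1$ cases of \eqref{kakutenAA} are not needed here and that $\delta$ is frozen before $\eps$ is shrunk are both accurate.
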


This proposition will be proved in Section 5. 
In the rest part of this section, we will derive Theorem~\ref{thm_sdge} 
from Corollary~\ref{cor_apriori}. 
First let us recall the local existence theorem. For fixed $t_0\geq 0$, 
let us consider the Cauchy problem
\begin{align}\label{NLS_loc}
\left\{
\begin{array}{l}
\op{L}_{m_{j}} u_{j} =F_j(u, \pa_x u), \quad t>t_0,\, x\in\R^2, \, j\in I_N,\\
u_j(t_0,x) = \psi_j(x), \qquad x\in\R^2,\, j\in I_N. 
\end{array}
\right.
\end{align}

\begin{lem}\label{local}
Let $\psi=(\psi_j)_{j\in I_N}\in \bigcap_{k=0}^5 H^{11-k, k}$. 
There exists a positive constant $\eps_0$, which is independent of $t_0$, 
such that the following holds: 
for any $\underline{\eps}\in (0,\,\eps_0]$ and $M\in (0,\, \infty)$, 
one can choose a positive constant 
$\tau^\ast=\tau^\ast(\underline{\eps},\,M)$, 
which is independent of $t_0$, 
such that \eqref{NLS_loc} admits a unique solution 
$u=(u_j)_{j\in I_N}\in C([t_0,\,t_0+\tau^\ast];\bigcap_{k=0}^5 H^{11-k, k})$, 
provided that
\[
\|\psi\|_{H^5}
\leq \underline{\eps} ~\ and\ ~ 
\sum_{|\beta|\le 5}   \sum_{j \in I_N} 
\|\op{J}_{m_j}(t_0)^\beta \psi_j \|_{H^{11-|\beta|}} \leq M.
\]
\end{lem}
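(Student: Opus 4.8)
Since \eqref{NLS_loc} is a quadratic derivative nonlinear Schr\"odinger system, Lemma~\ref{local} is a standard local well-posedness statement, and I would prove it by the classical energy method supplemented by a smoothing mechanism, as in \cite{IKS} and Chapter~7 of \cite{Miz}. The plan is to construct the solution as the limit of the successive approximation sequence $\{u^{(n)}\}_{n\ge 0}$, where $u^{(0)}$ is (say) the free evolution of $\psi$ and $u^{(n+1)}$ solves the linear problem
\[
\op{L}_{m_j}u_j^{(n+1)}=F_j(u^{(n)},\pa_x u^{(n)}),\qquad u_j^{(n+1)}(t_0,\cdot)=\psi_j\quad(j\in I_N),
\]
and to show that, for a suitably small $\tau^\ast=\tau^\ast(\underline\eps,M)$, all the $u^{(n)}$ remain in the closed set
\[
X:=\Bigl\{u\in C([t_0,t_0+\tau^\ast];\textstyle\bigcap_{k=0}^5 H^{11-k,k})\ :\ \sup_{t}\|u(t,\cdot)\|_{H^5}\le 2\underline{\eps},\ \ \sum_{|\beta|\le5}\sum_{j\in I_N}\sup_{t}\|\op{J}_{m_j}(t)^\beta u_j(t,\cdot)\|_{H^{11-|\beta|}}\le 2M\Bigr\}
\]
and form a Cauchy sequence in a weaker norm. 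The structural fact that makes this work is that, $F$ being a quadratic polynomial in $(u,\pa_xu,\cc u,\overline{\pa_x u})$, one can write $F_j=\sum_{k\in\sh{I}_N}\sum_{a}g_{jk,a}(u,\cc u)\,\pa_{x_a}\sh u_k+(\text{terms with no derivative})$ with coefficients $g_{jk,a}$ that are \emph{linear} in $(u,\cc u)$ and contain no derivatives.

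For the uniform-in-$n$ a priori bound I would apply $\pa_x^\alpha\op{J}_{m_j}(t)^\beta$ (with $|\beta|\le 5$ and $|\alpha|\le 11-|\beta|$) to the equation for $u^{(n+1)}$. Using the commutator relations \eqref{commute} and the Leibniz-type identities of Lemma~\ref{leibniz}, the functions $\pa_x^\alpha\op{J}_{m_j}(t)^\beta u_j^{(n+1)}$ solve a linear system of the form \eqref{smoothv} whose first-order coefficients are (up to harmless lower-order contributions) products of $g_{jk,a}(u^{(n)},\cc{u^{(n)}})$ with derivatives of $u^{(n)}$, and whose inhomogeneous terms $G$ involve only lower-order quantities, hence are dominated by the $X$-norm of $u^{(n)}$. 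The loss of one derivative coming from $g_{jk,a}\pa_{x_a}(\cdots)$ is recovered by the smoothing estimate: running the argument of Section~\ref{sec_smoothing} (the operators $\op{S}_\pm$ and Lemmas~\ref{smooth1}, \ref{smooth2}, with the time-dependent weight $\Theta_a$ there replaced by a version localized near the interval $[t_0,t_0+\tau^\ast]$, so that all constants are independent of $t_0$), the top-order term is absorbed into the smoothing gain once the relevant $L^\infty$-type norm of $g_{jk,a}$ is small. Since $g_{jk,a}$ is linear in $u^{(n)}$ and derivative-free, $\|g_{jk,a}(t,\cdot)\|_{W^{2,\infty}}\le C\|u^{(n)}(t,\cdot)\|_{H^4}\le C\|u^{(n)}(t,\cdot)\|_{H^5}\le 2C\underline\eps$, so this smallness is precisely guaranteed by the hypothesis $\|\psi\|_{H^5}\le\underline\eps$ provided $\eps_0$ is chosen small. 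A Gronwall argument then gives $u^{(n+1)}\in X$ whenever $u^{(n)}\in X$, with $\tau^\ast$ depending only on $\underline\eps$ and $M$.

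Convergence and uniqueness follow in the usual way: estimating the difference $u^{(n+1)}-u^{(n)}$ by the same energy/smoothing scheme in a lower-order norm (for instance in $C([t_0,t_0+\tau^\ast];L^2)$, or in $C([t_0,t_0+\tau^\ast];H^5)$ up to a loss that the smoothing restores) shows that $\{u^{(n)}\}$ is Cauchy there and converges to some $u$; the uniform bound then places $u$ in $C([t_0,t_0+\tau^\ast];\bigcap_{k=0}^5 H^{11-k,k})$ after a standard weak-limit and interpolation argument, time continuity in the strong topology being recovered by a Bona--Smith type regularization or directly from the energy identities. The same difference estimate gives uniqueness. I expect the \textbf{main obstacle} to be the derivative loss in $F_j(u,\pa_x u)$, exactly as in the global analysis; the decisive point is that the coefficient of the lost derivative is linear and derivative-free, so its smallness follows from the low-regularity smallness hypothesis, which is why that hypothesis (and not smallness of the full $\bigcap_k H^{11-k,k}$-norm) appears in the statement. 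A secondary technical nuisance is keeping every constant independent of $t_0$ in spite of the explicit $t$-dependence of $\op{J}_{m_j}(t)$ and of the weights $w_a(t,x)$; this is handled by restricting to the short interval $[t_0,t_0+\tau^\ast]$ and commuting $\op{J}_{m_j}(t_0)^\beta$ through the equation, which only produces $\pa_x$ and lower-order terms with $t_0$-free constants.
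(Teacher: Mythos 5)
Your proposal is correct and follows essentially the route the paper intends: the authors omit the proof as standard and refer to the Appendix of \cite{IKS}, where precisely this successive-approximation scheme, with the derivative loss absorbed by the smoothing estimates of the type in Section~\ref{sec_smoothing} and the smallness of the first-order coefficients supplied by the low-regularity hypothesis $\|\psi\|_{H^5}\le\underline{\eps}$, is carried out. One minor inaccuracy: since $F$ may contain products of two first derivatives, the coefficients $g_{jk,a}$ are linear combinations of $\pa_x^{\gamma}\sh{u}_l$ with $|\gamma|\le 1$ (cf.\ \eqref{dJF}) rather than derivative-free, but their required smallness in $W^{2,\infty}$ still follows from $\|u\|_{H^5}\le 2\underline{\eps}$ via the Sobolev embedding $H^2(\R^2)\hookrightarrow L^{\infty}(\R^2)$, so your argument is unaffected.
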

We skip the proof of this lemma because it is standard
(see e.g., Appendix of \cite{IKS} for the proof of similar lemma).

Now we are going to prove Theorem~\ref{thm_sdge}. The argument below is 
almost parallel to that of \S 6.1 in \cite{LS1}. 
Let $T^\ast$ be the supremum of all $T\in(0,\,\infty)$ such that the problem  
\eqref{NLS} admits a unique solution 
$u=(u_j)_{j\in I_N}\in C([0,\,T];\bigcap_{k=0}^5 H^{11-k, k})$. 
By Lemma \ref{local} with $t_0=0$, we have $T^\ast>0$ if 
$\|\varphi\|_{H^5}\leq \eps <\eps_0$.  
We also set 
\[
T_\ast = \sup \{ \tau\in[0,\,T^\ast) | E(\tau)\leq \eps^{2/3}\}.
\] 
By Sobolev imbedding and Corollary~\ref{cor_apriori}, 
we have 
\[
E(0)\leq \eps +C\|\varphi\|_{H^{8,2}}\leq C\eps \leq \dfrac{1}{2}\eps^{2/3}
\] 
if $\eps$ is small enough. 
Note that $T_\ast>0$ because of the continuity of 
$[0,T^\ast)\ni \tau \mapsto E(\tau)$. 
We claim that $T^\ast=T_\ast$ if $\eps$ is sufficiently small. 
Indeed, if $T_\ast < T^\ast$, Corollary \ref{A priori} with $T=T_\ast$ yields 
$E(T_\ast)\leq K\eps\leq\frac{1}{2}\eps^{2/3}$ for 
$\eps\leq\eps_3=\min\{\eps_2,\,1/(2K)^3\}$ 
where $K$ and $\eps_2$ are mentioned in Corollary~\ref{cor_apriori}. 
By the continuity of $[0,\,T^\ast)\ni\tau\mapsto E(\tau)$, 
we can take $T^\flat\in(T_\ast,\,T^\ast)$ such that 
$E(T^\flat)\leq \eps^{2/3}$, which contradicts the definition of $T_\ast$. 
Therefore we must have $T_\ast=T^\ast$. 
By using Corollary~\ref{cor_apriori} with $T=T^\ast$ again, we see that
\[
\sum_{j \in I_N}\sum_{|\beta|\leq5}
\|\op{J}_{m_j}(t)^\beta u_j(t,\cdot)\|_{H^{11-|\beta|}}
\leq 
K\eps(1+t)^\delta,
\qquad 
\sum_{j \in I_N}\sup_{\xi\in\R^2}|\jb{\xi}^8 A_j(t,\,\xi)|
\leq K\eps
\]
for $t\in[0,\,T^\ast)$. In particular, by Lemma~\ref{lem3} , we have
\[
\sup_{t\in[0,\,T^\ast)}\|u(t,\,\cdot)\|_{H^5}
\leq 
C\sum_{1 \leq j \leq N}
\sup_{(t,\,\xi)\in[0,\,T^\ast)\times\R^2}|\jb{\xi}^{5+2}A_j(t,\,\xi)|
\leq 
C^\flat\eps
\]
with some $C^\flat>0$. 
Next we assume $T^\ast <\infty$. 
Then, by setting $\eps_4=\min\{\eps_3,\,\eps_0/2C^\flat\}$, 
$M=K\eps_4(1+T^\ast)^\delta$, we have
\[
\sup_{t\in[0,\,T^\ast)}\sum_{j \in I_N}
\sum_{|\beta|\leq5}\|\op{J}_{m_j}(t)^\beta u_j(t,\cdot)\|_{H^{11-|\beta|}}
%
%
\leq M
\]
as well as
\[
\sup_{t\in[0,\,T^\ast)}\|u(t,\,\cdot)\|_{H^5}
\leq 
C^\flat\eps \leq \eps_0/2 <\eps_0
\]
for $\eps \leq \eps_4$. 
By Lemma \ref{local}, there exists $\tau^\ast>0$ such that 
\eqref{NLS} admits the solution 
$u=(u_j)_{j\in I_N}\in C([0,\,T^\ast+\tau^\ast];\bigcap_{k=0}^5 H^{11-k, k})$ if $\eps\leq\eps_0$. 
This contradicts the definition of $T^\ast$, 
which means $T^\ast=\infty$ for $\eps\leq\min\{\eps_0,\eps_4\}$. 
Moreover, we have 
\[
\Bigl(\ \|u(t,\cdot)\|_{L^2 \cap L^{\infty}}\lesssim \ \Bigr)\ 
\|u(t,\cdot)\|_{H^2}
\leq 
C\sum_{j \in I_N}\sup_{(\tau,\,\xi)\in[0,\,\infty)\times\R^2}
|\jb{\xi}^{4}A_j(\tau,\,\xi)|
\leq C\eps
\]
and
\[
\|u(t,\cdot)\|_{L^{\infty}}
\leq 
\dfrac{C}{t}\| A(t,\cdot)\|_{L^\infty}
+\dfrac{C}{t^{3/2}}
\sum_{j\in I_N}\sum_{|\beta|\leq 2}
 \|\op{J}_{m_j}(t)^\beta u_j(t,\cdot)\|_{L^2}
\leq
\dfrac{C\eps}{t},\qquad t\ge 1,
\]
by Lemmas~\ref{lem3} and \ref{Linftylem}, respectively. 
This completes the proof of Theorem \ref{thm_sdge}.

\qed

\section{Proof of Proposition~\ref{A priori}}  
\label{sec_pf_a_priori}

This section is devoted to the proof of Proposition~\ref{A priori}.
Throughout this section, 
we always assume that the conditions ({\bf a}), ({\bf b}) are satisfied, 
and that $u\in C([0,T];\bigcap_{k=0}^5 H^{11-k,k}(\R^2))$ 
is a solution to \eqref{NLS} which satisfies
\begin{align} \label{assump_apriori}
 E(T) \le \eps^{2/3}
\end{align}
for given $T>0$. 
The proof will be divided into three parts: 
we first consider the case 
of $t\in [0,1]$ in \S~\ref{bd_small_time}, and then  we will show 
\eqref{kakutenAA} and \eqref{L2_est} in \S~\ref{sec:A} and \S~\ref{L^2}, 
respectively.
In what follows, we will use the following convention on implicit 
constants: 
the expression $f=\sum_{\lambda \in \Lambda}' g_{\lambda}$ means that there 
exists a family $\{C_{\lambda}\}_{\lambda \in \Lambda}$ of constants such 
that $f=\sum_{\lambda \in \Lambda} C_{\lambda} g_{\lambda}$.

\subsection{Estimates in the small time}\label{bd_small_time}
In this part, we focus on the case of $t\in [0,1]$. This case is easier 
because we do not have to pay attentions to possible growth in $t$. 

Let $\gamma\in \Z_+^2$ satisfy $|\gamma|\le 5$. 
By the Sobolev imbedding $H^2(\R^2)\hookrightarrow L^\infty(\R^2)$ 
and the assumption \eqref{assump_apriori}, we have
\begin{align*}
\frac{d}{dt} \|\op{J}_{m_j}^\gamma u_j\|_{H^{8-|\gamma|}}
&\leq 
\|\op{J}_{m_j}^\gamma F_j(u, \pa_x u)\|_{H^{8-|\gamma|}} \\
&\leq 
C\sum_{|\alpha|,|\beta|\leq 1}\sum_{k,l \in \sh{I}_N}
\sum_{\gamma' + \gamma'' = \gamma}
\bigl\|
 (\op{J}^{\gamma'}_{\tilde{m}_{k}}\pa_x^{\alpha}\sh{u}_{k}) 
(\op{J}_{\tilde{m}_{l}}^{\gamma''}\pa_x^{\beta} \sh{u}_{l} )
\bigr\|_{H^{8-|\gamma|}}
 \\
&\leq 
C\sum_{k,l\in I_N}\sum_{|\gamma'| + |\gamma''|\le 5}
 \|\op{J}^{\gamma'}_{m_{k}}u_{k}\|_{H^{9-|\gamma|}} 
\|\op{J}_{m_{l}}^{\gamma''}u_{l}\|_{H^3}\\
&\leq 
C \eps^{4/3} (1+t)^{2\delta},
\end{align*} 
whence
\begin{equation}\label{0_to_1}
\sup_{t\in [0,1]}\sum_{j \in I_N}\sum_{|\gamma| \leq 5} 
\| \op{J}_{m_j}(t)^{\gamma}u_j(t, \cdot) \|_{H^{8-|\gamma|}}
\le
C\eps +C\eps^{4/3} \int_0^{1} (1+\tau)^{2\delta}\, d\tau
\le 
C\eps.
\end{equation}
Therefore Lemma~\ref{lem3} gives us 
\begin{align*}
\sum_{|\alpha|\leq3}
\jb{\xi}^{8-|\alpha|} |\pa_{\xi}^{\alpha}A(t,\xi)|
&\le
C\sum_{|\alpha|\leq 3}
\|A_j(t,\cdot)\|_{H_\xi^{|\alpha|+2, 8-|\alpha|}}\\
&\leq 
C\sum_{j \in I_N}\sum_{|\gamma|\leq 5}
\|\op{J}_{m_j}^{\gamma} u_j(t,\cdot)\|_{H_x^{8-|\gamma|}}\\
&\leq 
 C\eps
\end{align*}
for $(t,\xi)\in [0,1]\times \R^2$.
Next we put 
$v_j^{(\alpha,\beta)}(t,x):=\pa_x^\alpha \op{J}_{m_j}(t)^\beta u_j(t,x)$ 
for $\alpha,\beta\in \Z_+^2$ with $|\alpha|+|\beta|\le 11$, $|\beta|\le 5$. 
We also set 
\begin{align}
G_j^{(\alpha,\beta)}:=&
\pa_x^\alpha \op{J}_{m_j}^\beta  
F_j(u,\pa_x u)
\nonumber\\
&-
\sum_{\substack{ |\alpha'|= 1\\ |\beta'|\le 1}}
\sum_{k,l\in \sh{I}_N}
C_{j,k,l}^{\alpha',\beta'}
 \Bigl(\frac{\tilde{m_k}}{m_j}\Bigr)^{|\beta|}
 \sh{(\pa_x^{\alpha'}\pa_x^\alpha \op{J}_{m_k}^\beta u_k)}
 (\pa_x^{\beta'} \sh{u}_{l} ) 
\nonumber\\
 &-
\sum_{\substack{|\alpha'|\le 1\\ |\beta'| = 1}}
\sum_{k,l\in \sh{I}_N}
C_{j,k,l}^{\alpha',\beta'}
 \Bigl(\frac{\tilde{m_l}}{m_j}\Bigr)^{|\beta|}
 (\pa_x^{\alpha'}\sh{u}_{k}) 
 \sh{(\pa_x^{\beta'} \pa_x^\alpha \op{J}_{m_l}^\beta u_l)}, 
\label{def_G}
\end{align}
where $C_{j,k,l}^{\alpha',\beta'}$ comes from \eqref{nonlin}. Then we have 
\begin{align}
\op{L}_{m_j} v_{j}^{(\alpha,\beta)}
=&\pa_x^\alpha \op{J}_{m_j}^\beta  F_j(u,\pa_x u)\notag\\
=& 
 \sum_{k,l\in \sh{I}_N}\left(
 \sumc_{\substack{ |\alpha'|= 1\\ |\beta'|\le 1}}
  (\pa_x^{\beta'} \sh{u}_{l} ) \sh{\pa_x^{\alpha'}(v_{k}^{(\alpha,\beta)})}
  + 
 \sumc_{\substack{|\alpha'|\le 1\\ |\beta'| = 1}}
 (\pa_x^{\alpha'}\sh{u}_{k}) 
 \sh{\pa_x^{\beta'} (v_{l}^{(\alpha,\beta)})} 
 \right)
+ G_j^{(\alpha,\beta)}\notag\\
=&
\sum_{k\in \sh{I}_N} \sum_{a=1}^2 g_{jk,a}^{(\alpha,\beta)} 
\pa_{x_a}\sh{(v_{k}^{(\alpha,\beta)})} +G_j^{(\alpha,\beta)}, 
\label{dJF}
\end{align}
where $g_{jk,a}^{(\alpha,\beta)}$ is a linear combination of 
$\pa_x^{\gamma} \sh{u}_{l}$ ($|\gamma|\le 1$, $l\in \sh{I}_N$). 
In view of Lemma \ref{leibniz} and the commutation relation \eqref{commute}, 
we see that $G_j^{(\alpha,\beta)}$ can be written in the form
\begin{align}
 G_j^{(\alpha,\beta)}=\sum_{k,l\in \sh{I}_N} 
 \sumc_{
  \substack{
                  |\sigma|+|\sigma'| \le |\alpha|+2\\ 
                   |\rho|+|\rho'| \le |\beta|\\ 
                     \max\bigl\{ |\rho|+|\sigma|,\,  |\rho'|+|\sigma'|  \bigr\}\leq|\alpha|+|\beta|\\ 
                        \max\bigl\{ |\sigma|,\,  |\sigma'|  \bigr\}\leq|\alpha|+\min\{ 1,\,  |\beta| \} 
               }
           }  
\sh{\bigl(\pa_x^{\sigma} \op{J}_{m_k}^{\rho} u_k \bigr)}
\sh{\bigl(\pa_x^{\sigma'} \op{J}_{m_{l}}^{\rho'} u_l \bigr)}.
\label{G_key}
\end{align}
In order to estimate this term, we set
\begin{equation*}
\ene{p,q}(t)
:=
 \sum_{j \in I_N}
 \sum_{|\gamma|\le q}
 \left\|\op{J}_{m_j}(t)^\gamma u_j(t,\cdot)\right\|_{H^{p}} 
\end{equation*}
for $p,\ q\in\Z_+$. 
We also set $\ene{p,q}(t):=0$ for $q\leq -1$. 
Let $|\alpha|=p$, $|\beta|=q$. 
Then, if $p+q\le 11$ and $q\le 5$, 
$G_j^{(\alpha,\beta)}$ 
can be estimated as follows:
\begin{align}
\sum_{j\in I_N}\|G_j^{(\alpha,\beta)}(t,\cdot)\|_{L^2}
\leq 
C
\sum_{|\gamma|\le 3}\sum_{j\in I_N}
\|\op{J}_{m_j}(t)^\gamma u_j(t,\cdot)\|_{W^{8-|\gamma|,\infty}}
 \ene{p+\min\{ 1,\,  |\gamma| \},q-|\gamma|}(t)  \label{est_yayakoshii}
\end{align}
(see \S~\ref{sec:A2} for the derivation of this inequality).
Therefore we have
\begin{align*}
 \|G_j^{(\alpha,\beta)}(t,\cdot)\|_{L^2} 
\le& 
C \eps^{5/3}(1+t)^{\delta}
\end{align*}
under the condition ({\bf a}) and the assumption \eqref{assump_apriori}.
We also note that 
\begin{align*}
\sum_{|\beta'|\leq2}\sum_{l\in \sh{I}_N}\sum_{|\gamma|\leq 1}
\jb{t}^{-|\beta'|+1}
\|
  \op{J}_{\tilde{m}_l}^{\beta'} \pa_x^{\gamma} \sh{u}_{l} 
\|_{W^{2-|\beta'|,\infty}}
\leq 
C\sum_{j\in I_N}\sum_{|\beta'|\leq 2}
\|\op{J}_{m_j}^{\beta'} u_j \|_{H^{5}}
\leq 
 C\eps 
\end{align*}for $t\in[0,1]$. 
Therefore 
we can apply Proposition~\ref{lem:smooth3} to \eqref{dJF} and conclude that
\begin{align*}
\sum_{|\beta|\le 5}
\|\op{J}_{m_j}^\beta u_j(t,\cdot)\|_{H^{11-|\beta|}}
&\le
 C\sum_{\substack{ |\alpha|+|\beta|\le 11\\ |\beta|\le 5}}
\|v_j^{(\alpha,\beta)}(t)\|_{L^2} \\
&\leq 
C\eps + 
C\int_0^1 
 \left(
  \dfrac{C\eps}{\jb{\tau}} \eps^{2/3}(1+\tau)^\delta
  + 
  C\eps^{5/3} (1+\tau)^{\delta}
 \right)d\tau\\
&\leq  
C\eps
\end{align*}
for $t\in[0,1]$, as desired.

\subsection{Pointwise estimates in the large time}\label{sec:A}
The goal of this part is to obtain 
\begin{align}\label{est_A}
| \pa_{\xi}^{\gamma} A(t, \xi) | 
\leq 
\frac{C\eps \, t^{C|\gamma|\eps^{1/3}}}{\jb{\xi}^{8-|\gamma|}}
\end{align}
for $(t,\xi) \in [1, T)\times \R^2,\ |\gamma|\leq 3$ 
under the assumption \eqref{assump_apriori}. 
To this end, we set 
\begin{equation}\label{R}
R_j(t,\xi):=
\op{F}_{m_j} [\op{U}^{-1}_{m_j}F_j(u(t,\cdot),\pa u(t,\cdot))](\xi)-\frac{1}{t}p_j(\xi,A(t,\xi))
\end{equation}
so that
\begin{align}\label{red1}
i\pa_tA_j(t,\xi)
&=
 \op{F}_{m_j} \op{U}^{-1}_{m_j}(t)\op{L}_{m_j}u_j \notag \\
&=
 \op{F}_{m_j} \op{U}^{-1}_{m_j}(t)F_j(u,\pa u) \notag \\
&=
 \frac{1}{t}p_j(\xi;A(t,\xi)) + R_j(t,\xi)
\end{align}
for each $j\in I_N$. 
In view of the folloing lemma, we see that $R(t,\xi)=(R_j(t,\xi))_{j\in I_N}$ 
can be regarded as a remainder if we have a good control of 
$\|\op{J}_{m_j}^{\beta}u_j\|_{H^{11-|\beta|}}$ for $|\beta|\le 5$.
\begin{lem}\label{R4}
Suppose that the condition {\rm ({\bf a})} is satisfied. 
For $k\in \Z_+$ and $\gamma \in \Z_+^2$, we have
\[
| \pa_{\xi}^{\gamma}R(t,\xi)| 
\leq 
\frac{C}{t^{3/2}\jb{\xi}^k}
\sum_{j \in I_N}\sum_{|\beta|\leq |\gamma|+2}
\|\op{J}_{m_j}(t)^{\beta}u_j(t,\cdot)\|_{H^{k+1}}^2, 
\quad (t,\xi)\in [1,T)\times \R^2.
\]
\end{lem}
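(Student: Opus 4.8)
The plan is to unwind the definition of $R_j$ using the decomposition $\op{U}_m = \op{M}_m \op{D} \op{F}_m \op{M}_m$ and carefully isolate the genuine leading term of $\op{F}_{m_j}\op{U}_{m_j}^{-1} F_j(u,\pa_x u)$, so that what remains is quadratic in the (derivatives of) the profile and carries an extra power of $t^{-1/2}$. First I would expand each monomial $(\pa_x^\alpha \sh{u}_k)(\pa_x^\beta \sh{u}_l)$ appearing in $F_j$ via the identity $\pa_x^\gamma = $ (algebraic factor)$\times \op{U}_{m}\op{U}_{m}^{-1}\pa_x^\gamma$ and the asymptotic formula $u_k \sim \op{M}_{m_k}\op{D}\op{F}_{m_k}\op{U}_{m_k}^{-1}u_k$ from Lemma \ref{Linftylem}: the difference between $u_k$ and its leading profile approximation costs $t^{-3/2}$ in $L^\infty$ (controlled by $\sum_{|\beta|\le 2}\|\op{J}_{m_k}^\beta u_k\|_{L^2}$), while one remaining factor stays in $L^2$. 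Multiplying two factors of a product where one is replaced by its profile and the other absorbs the error gives the stated $t^{-3/2}$ gain. The phase bookkeeping is the point where condition ({\bf a}) enters: when $m_j = \tilde m_k + \tilde m_l$ the oscillatory phases $e^{i m_j|x|^2/2t}$, $e^{i\tilde m_k|x|^2/2t}$, $e^{i\tilde m_l|x|^2/2t}$ match up after applying $\op{U}_{m_j}^{-1}$, and the leading term collapses exactly to $\frac{1}{t}p_j(\xi;A(t,\xi))$; when $m_j \ne \tilde m_k + \tilde m_l$ the corresponding coefficient $C_{j,k,l}^{\alpha,\beta}$ vanishes, so no non-resonant (but still only $O(t^{-1})$) terms survive and need to be handled by a normal-form/integration-by-parts in $t$ argument.

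The key steps, in order: (i) write $F_j(u,\pa_x u)$ as a finite sum of terms $c\,(\pa_x^\alpha \sh{u}_k)(\pa_x^\beta \sh{u}_l)$ with $|\alpha|,|\beta|\le 1$; (ii) for each such term substitute $\pa_x^\alpha \sh{u}_k = \op{U}_{\tilde m_k}\op{U}_{\tilde m_k}^{-1}\pa_x^\alpha \sh{u}_k$, apply the decomposition of $\op{U}_{\tilde m_k}$, and split off the main profile piece $(i\tilde m_k \xi)^\alpha \op{M}_{\tilde m_k}\op{D}[\sh{A}_k]$ plus a remainder that is $O(t^{-3/2})$ in $L^\infty$ by Lemma \ref{Linftylem} together with Lemma \ref{lem3}; (iii) collect the genuine main term, use $\tilde m_k + \tilde m_l = m_j$ (guaranteed by ({\bf a}) wherever the coefficient is nonzero) so that $\op{U}_{m_j}^{-1}\op{M}_{m_j}\op{D}[\cdots] = \op{D}[\cdots]$ up to the $\op{W}$-correction, identify it with $\frac{1}{t}p_j(\xi;A)$, and estimate the $\op{W}_{m_j}-1$ error by Lemma \ref{W} at cost $t^{-1/2}$; (iv) estimate all remainder terms in $L^2_\xi$-weighted-$L^\infty_\xi$ by Hölder, putting the profile factor in $L^\infty_\xi$ (bounded by $\|\jb{\cdot}^{k}\pa_\xi^{\le|\gamma|}A\|$, itself controlled by $\sum_{|\beta|\le |\gamma|}\|\op{J}^\beta u\|_{H^k}$ via Lemma \ref{lem3}) and the $t^{-3/2}$-small error factor in $L^2$; (v) for the $\pa_\xi^\gamma$ derivatives, use $\op{F}_m\op{U}_m^{-1}\op{J}_{m,a} = (i/m)\pa_{\xi_a}\op{F}_m\op{U}_m^{-1}$ from \eqref{lem1} to trade each $\xi$-derivative for one extra $\op{J}_m$, which is why the sum runs over $|\beta|\le|\gamma|+2$; the $\jb{\xi}^{-k}$ weight is produced by commuting $\pa_x^{\le k}$ inside and bounding $\jb{\xi}^k \op{F}_m\op{U}_m^{-1}(\cdots)$ by $\op{F}_m\op{U}_m^{-1}\pa_x^{\le k}(\cdots)$, again via \eqref{lem1}, then Lemma \ref{lem2}.

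The main obstacle I expect is the careful tracking in step (iii)–(iv) of which factor is kept as the leading profile and which absorbs the smallness, while simultaneously keeping enough derivatives and $\op{J}$-weights on each factor so that every $L^2$ norm that appears is of the form $\|\op{J}_{m_j}^\beta u_j\|_{H^{k+1}}$ with $|\beta|\le|\gamma|+2$ — in particular the Leibniz rule for $\op{J}_m$ acting on products (Lemma \ref{leibniz}) must be applied repeatedly, and one has to check that no term demands more than $|\gamma|+2$ copies of $\op{J}$ or more than $k+1$ spatial derivatives on a single factor. A secondary technical point is handling the $\op{W}_{m_j}-1$ correction uniformly in $\xi$ and with the $\jb{\xi}^{-k}$ weight, but since Lemma \ref{W} already gives the $t^{-1/2}$ gain in $L^\infty$ and we may freely insert $\pa_x$'s before applying it, this is routine. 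Once all pieces are assembled, the worst bound is $t^{-3/2}\jb{\xi}^{-k}\sum_j\sum_{|\beta|\le|\gamma|+2}\|\op{J}_{m_j}^\beta u_j\|_{H^{k+1}}^2$, which is exactly the claimed estimate.
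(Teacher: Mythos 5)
Your plan follows essentially the same route as the paper's proof: factorize $\op{U}_m=\op{M}_m\op{D}\op{W}_m\op{F}_m$, use the mass relation from ({\bf a}) to cancel the quadratic phases so the main term collapses to $\frac{1}{t}p_j(\xi;A)$, write $R_j$ as a sum of terms each carrying a factor $\op{W}^{\pm1}_{\cdot}-1$ estimated by Lemma~\ref{W}, obtain the weight $\jb{\xi}^{-k}$ by pairing the binomial expansion of $(im_j\xi)^{\alpha}$ with the Leibniz expansion of $\pa_x^{\alpha}F_j$ via \eqref{lem1}, and trade each $\pa_\xi$ for one $\op{J}$, which is exactly where the $|\gamma|+2$ comes from. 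One bookkeeping correction: since the claim is pointwise in $\xi$, in your step (iv) both factors of each product must be placed in $L^\infty_\xi$ (each controlled by an $H^2$-type norm of the profile, as in Lemmas~\ref{W} and \ref{lem3}), not one in $L^\infty_\xi$ and one in $L^2_\xi$ — the squared $H^{k+1}$ norm in the conclusion reflects two $L^2$-based controls of $L^\infty$ factors, not a H\"older pairing.
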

We will give the proof of this lemma in \S~\ref{sec:R4}. 
(The proof looks a bit complicated, but the idea is quite simple: 
split $\pa_{\xi}^{\gamma} R$ into a linear combination of terms 
including the factor $t^{-1}(\op{W}^{\pm}-1)$, 
and apply Lemma~\ref{W} to each of them.) 
%
%
Anyway, what we need here is 
\begin{align}\label{est_R}
|\pa_{\xi}^{\gamma} R(t,\xi)| 
\leq 
\frac{C\eps^{4/3}}{t^{3/2 -2\delta} \jb{\xi}^{8-|\gamma|}},
\quad  (t,\xi)\in [1,T)\times \R^2,
\end{align}
for $|\gamma| \leq 3$, which is a consequence of Lemma~\ref{R4} and 
the assumption \eqref{assump_apriori}.
Note that $(8-|\gamma|)+1=11-(|\gamma|+2)$.

Now we are going to prove \eqref{est_A}. First we consider the case of 
$\gamma=0$.
We put 
\[
 \nu(t,\xi) = \sqrt{\jb{A(t,\xi),HA(t,\xi)}_{\C^N}},
\] 
where $H$ is the positive Hermitian matrix appearing in the condition 
{\rm ({\bf b})}. Remark that
\begin{equation}\label{equ}
\sqrt{\eta_\ast}|A(t,\xi)|
\leq
\nu(t,\xi)\leq\sqrt{\eta^\ast}|A(t,\xi)|
\end{equation}
where $\eta_\ast$ and $\eta^\ast$ are the smallest and largest eigenvalues 
of $H$, respectively. It follows from \eqref{red1} and {\rm ({\bf b})} that
\begin{align*}
\pa_t \nu(t,\xi)^2 &= 2\imagpart \jb{i\pa_t A(t,\xi),HA(t,\xi)}_{\C^N}\\
&=
\frac{2}{t}\imagpart \jb{p(\xi;A(t,\xi)),HA(t,\xi)}_{\C^N}
+2\imagpart \jb{R(t,\xi),HA(t,\xi)}_{\C^N}\\
&\leq 
0+C|R(t,\xi)|\nu(t,\xi).
\end{align*}
By \eqref{est_R}, we have
\begin{equation}\label{alpha0}
\nu(t,\xi)\leq \nu(1,\xi)+C\int^t_1|R(\tau,\xi)|d\tau
\leq 
\frac{C\eps}{\jb{\xi}^8}
+
\frac{C\eps^{4/3}}{\jb{\xi}^8}\int^\infty_1\frac{d\tau}{\tau^{3/2-2\delta}}
\leq
\frac{C\eps}{\jb{\xi}^8}.
\end{equation}
Therefore we obtain
\[
|A_j(t,\xi)|\leq C\nu(t,\xi)\leq \frac{C\eps}{\jb{\xi}^8},
\]
as desired. 

Next we turn our attentions to the case of $1\le |\gamma|\le 3$ 
in \eqref{est_A}. Before doing so, we set 
$\Lambda_{j,k,l}^{\alpha,\beta}(\xi)
= 
C_{j,k,l}^{\alpha,\beta}(i \tilde{m}_{k}\xi)^{\alpha}
(i \tilde{m}_{l}\xi)^{\beta}$ 
so that $p_j(\xi;Y)$ can be expressed by
\begin{equation*}
p_j(\xi;Y)
=\sum_{|\alpha|,|\beta|\leq 1}\sum_{k,l \in \sh{I}_N} 
\Lambda_{j,k,l}^{\alpha,\beta}(\xi)\sh{Y}_{k}\sh{Y}_{l}.
\end{equation*}
Applying $\pa_{\xi}^{\gamma}$ to \eqref{red1}, we have 
\[
 i\pa_t \pa_{\xi}^{\gamma}A_j
 =\frac{1}{t} \sum_{|\alpha|,|\beta|\leq 1}\sum_{k,l \in \sh{I}_N} 
 \sumc_{\gamma'+\gamma''+\gamma'''=\gamma}
(\pa_{\xi}^{\gamma'}\Lambda_{j,k,l}^{\alpha,\beta})
(\pa_{\xi}^{\gamma''}\sh{A}_{k})(\pa_{\xi}^{\gamma'''}\sh{A}_{l})
+\pa_{\xi}^{\gamma}R_j.
\]
By virtue of \eqref{est_R}, we see that
\[
 |\pa_t \pa_{\xi}^{\gamma}A(t,\xi)|
 \le \frac{C}{t}
\sum_{\substack{\gamma'+\gamma''+\gamma'''=\gamma \\ |\gamma'|\le 2}}
\jb{\xi}^{2-|\gamma'|}
|\pa_{\xi}^{\gamma''}A(t,\xi)| |\pa_{\xi}^{\gamma'''}A(t,\xi)|
+\frac{C\eps^{4/3}}{t^{3/2 -2\delta} \jb{\xi}^{8-|\gamma|}}.
\]
Now we take $|\gamma|=1$. It follows that 
\begin{align*}
  |\pa_t \pa_{\xi}A(t,\xi)|
 &\le 
 \frac{C}{t}
 \Bigl( \jb{\xi}|A|^2 +\jb{\xi}^2 |A||\pa_{\xi}A| \Bigr)
 + 
 \frac{C\eps^{4/3}}{t^{3/2 -2\delta} \jb{\xi}^{7}}\\
 &\le
 \frac{C}{t}
 \Bigl(\eps^2 \jb{\xi}^{-15} + \eps \jb{\xi}^{-6} |\pa_{\xi}A| \Bigr)
 + 
 \frac{C\eps^{4/3}}{t^{3/2 -2\delta} \jb{\xi}^{7}}\\
 &\le
 \frac{C\eps }{t}|\pa_{\xi}A(t,\xi)|  +\frac{C\eps^{4/3}}{t\jb{\xi}^7}.
\end{align*}
Hence we deduce from the Gronwall-type argument that
\begin{align*}
|\pa_{\xi}A(t,\xi)|
&\leq 
|\pa_{\xi}A(1,\xi)|t^{C\eps}
+\int^{t}_1\frac{C\eps^{4/3}}{\tau \jb{\xi}^7}
\left(\frac{t}{\tau}\right)^{C\eps}d\tau\\
&\leq 
\frac{C\eps t^{C\eps}}{\jb{\xi}^7}
+\frac{C\eps^{4/3}}{\jb{\xi}^7}
\int^{t}_1\frac{1}{\tau}\left(\frac{t}{\tau}\right)^{C\eps^{1/3}}d\tau\\
&\leq 
\frac{C\eps t^{C\eps^{1/3}}}{\jb{\xi}^7},
\end{align*}
as required.
Next we take $|\gamma|=2$. Then we have as before that 
\begin{align*}
  |\pa_t \pa_{\xi}^{\gamma}A(t,\xi)|
 &\le 
 \frac{C}{t}
 \Bigl\{ \jb{\xi}|A||\pa_{\xi}A| 
+\jb{\xi}^2( |\pa_{\xi}A|^2+|A||\pa_{\xi}^{\gamma}A|) \Bigr\}
 + 
 \frac{C\eps^{4/3}}{t^{3/2 -2\delta} \jb{\xi}^{6}}\\
 &\le
 \frac{C}{t}
 \Bigl(\eps^2 \jb{\xi}^{-12}t^{C\eps^{1/3}}
 + \eps \jb{\xi}^{-6} |\pa_{\xi}^\gamma A| \Bigr)
 + 
 \frac{C\eps^{4/3}}{t^{3/2 -2\delta} \jb{\xi}^{6}}\\
 &\le
 \frac{C\eps }{t}|\pa_{\xi}^{\gamma}A(t,\xi)| 
 +\frac{C\eps^{4/3}}{t^{1-C\eps}\jb{\xi}^6}.
\end{align*}
So, the Gronwall-type argument again implies
\begin{align*}
\sum_{|\gamma|= 2}|\pa_{\xi}^{\gamma}A(t,\xi)|
\leq 
\sum_{|\gamma|= 2}|\pa_{\xi}^{\gamma} A(1,\xi)|t^{C\eps}
+\frac{C\eps^{4/3}}{\jb{\xi}^6}
\int^{t}_1
\frac{1}{\tau^{1-C\eps}}\left(\frac{t}{\tau}\right)^{C\eps^{1/3}}d\tau
\leq 
\frac{C\eps t^{C\eps^{1/3}}}{\jb{\xi}^6}.
\end{align*}
Note that $\eps\ll \eps^{1/3}$ for small $\eps$. 
Similary, when $|\gamma|=3$ we have 
\begin{align*}
  |\pa_t \pa_{\xi}^{\gamma}A(t,\xi)|
 &\le
 \frac{C\eps }{t}|\pa_{\xi}^{\gamma}A(t,\xi)| 
 +\frac{C\eps^{4/3}}{t^{1-C\eps}\jb{\xi}^5},
\end{align*}
whence 
\begin{align*}
\sum_{|\gamma|= 3}|\pa_{\xi}^\gamma A(t,\xi)|
\leq 
\frac{C\eps t^{C\eps^{1/3}}}{\jb{\xi}^5}.
\end{align*}
This completes the proof of \eqref{est_A} for all $|\gamma|\le 3$.
\qed

\subsection{$L^2$-estimates in the large time  }\label{L^2}
The remaining task is to show \eqref{L2_est} for $t\in [1,T)$ 
under the assumption \eqref{assump_apriori}. 
Remember that 
\begin{equation*}
\ene{p,q}(t)
=
 \sum_{j \in I_N}
 \sum_{|\beta|\le q}
 \left\|\op{J}_{m_j}(t)^\beta u_j(t,\cdot)\right\|_{H^{p}} 
\end{equation*}
for $p,\ q\in\Z_+$, and we set $\ene{p,q}(t):=0$ for $q\leq -1$.
\begin{lem}\label{L2base}
Let $p$, $q \in \Z_+$ satisfy $q \leq 5$ and $p +q \leq 11$. 
Under the assumption \eqref{assump_apriori}, 
there exist positive constants $C_7$ and $C_8$, not depending on 
$T$ and $\eps$, such that 
\begin{equation}
\ene{p,q}(t)
\leq 
C_7\eps t^{C_8\eps^{1/3}}, 
\quad t\in [1,T).
 \label{L2est}
\end{equation}
\end{lem}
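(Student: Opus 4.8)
The plan is to prove \eqref{L2est} by a finite induction on the number $q$ of operators $\op{J}$, the cases $q\le -1$ being trivial by the convention $\ene{p,q}\equiv 0$. Fix $q\in\{0,\dots,5\}$ and assume \eqref{L2est} for all strictly smaller values (for $q=0$ nothing is assumed and the argument degenerates). For each pair $(\alpha,\beta)$ with $|\alpha|\le 11-q$, $|\beta|=q$, the function $v_j^{(\alpha,\beta)}=\pa_x^\alpha\op{J}_{m_j}(t)^\beta u_j$ solves, by \eqref{dJF}, a system of the shape \eqref{smoothv} whose coefficients $g_{jk,a}^{(\alpha,\beta)}$ are linear combinations of $\pa_x^\gamma\sh{u}_l$ ($|\gamma|\le 1$) and whose inhomogeneity is $G_j^{(\alpha,\beta)}$. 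I would first apply Proposition~\ref{lem:smooth3} on $[1,T)$ with $t_0=1$ and $\mu_k$ the mass of the $k$-th component; this is legitimate once one checks $\Omega=\Omega_{1,T}\le\omega_0$. Since $\Omega$ sees only $\op{J}_{\mu_k}^{\beta'}$ with $|\beta'|\le 2$ together with at most two further $x$-derivatives acting on $\pa_x^{\le 1}\sh{u}_l$, Lemma~\ref{lem2} and the decomposition \eqref{Linfty} bound each summand by $\jb{t}^{-|\beta'|+1}$ times $t^{-1}\sum_{|\sigma|\le 2}\|\jb{\xi}^{8}\pa_\xi^{\sigma}A\|_{L^\infty}+t^{-3/2}\cdot(\text{an }L^2\text{-norm})$, and, using \eqref{est_A} for the first factor (no $t$-growth when no $\xi$-derivative is present) and \eqref{assump_apriori}, which gives $\ene{p',q'}(t)\le\eps^{2/3}(1+t)^\delta$ for all admissible $(p',q')$, for the second, one gets $\Omega\le C\eps^{2/3}$, uniformly in $(\alpha,\beta)$, hence $\le\omega_0$ for $\eps$ small.

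Summing the conclusion of Proposition~\ref{lem:smooth3} over $j\in I_N$ and over the admissible $(\alpha,\beta)$, bounding the data at $t=1$ by $C\eps$ via the end of \S\ref{bd_small_time}, and using Lemma~\ref{lem2} to pass between $\ene{11-q,q}$ and $\sum_j\sum_{|\alpha|\le 11-q,|\beta|\le q}\|v_j^{(\alpha,\beta)}\|_{L^2}$ (the pieces with $|\beta|<q$ being already controlled by the inductive hypothesis), I obtain
\[
\ene{11-q,q}(t)\le C\eps+C\int_1^t\Bigl(\frac{\Omega}{\jb{\tau}}\ene{11-q,q}(\tau)+\sum_{\substack{|\alpha|\le 11-q\\ |\beta|\le q}}\sum_{j\in I_N}\|G_j^{(\alpha,\beta)}(\tau,\cdot)\|_{L^2}\Bigr)d\tau .
\]
The heart of the argument is the estimate of the $G$-term through \eqref{est_yayakoshii}: for $|\alpha|=p\le 11-q$, $|\beta|=b\le q$ it yields $\sum_j\|G_j^{(\alpha,\beta)}\|_{L^2}\le C\sum_{|\gamma|\le 3}\bigl(\sum_j\|\op{J}_{m_j}^\gamma u_j\|_{W^{8-|\gamma|,\infty}}\bigr)\ene{p+\min\{1,|\gamma|\},\,b-|\gamma|}(t)$. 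Each $L^\infty$-factor is controlled by \eqref{Linfty} and \eqref{est_A}: the leading part is $\le t^{-1}\sum_{|\gamma'|\le|\gamma|}\|\jb{\xi}^{8-|\gamma|}\pa_\xi^{\gamma'}A\|_{L^\infty}\le C\eps\,t^{-1+C|\gamma|\eps^{1/3}}$ (the weights match because $(8-|\gamma|)+|\gamma|=8$), while the $L^2$-remainder is $\le Ct^{-3/2}\eps^{2/3}(1+t)^\delta$ by \eqref{assump_apriori}; hence $\sum_j\|\op{J}_{m_j}^\gamma u_j\|_{W^{8-|\gamma|,\infty}}\le C\eps^{2/3}t^{-1+C\eps^{1/3}}$, with genuinely no $t$-growth when $\gamma=0$.

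For $\gamma=0$ the companion factor $\ene{p,b}(t)$ is $\le\ene{11-q,q}(t)$, so this term contributes at most $C\eps^{2/3}\tau^{-1}\ene{11-q,q}(\tau)$ to the integral. For $1\le|\gamma|\le 3$ the companion factor $\ene{p+1,b-|\gamma|}(t)$ carries fewer than $q$ operators $\op{J}$, so by the inductive hypothesis it is $\le C\eps\,\tau^{C\eps^{1/3}}$; multiplying and integrating gives $\le C\eps^{5/3}\int_1^t\tau^{-1+C\eps^{1/3}}\,d\tau\le C\eps^{4/3}t^{C\eps^{1/3}}\le C\eps\,t^{C\eps^{1/3}}$. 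Collecting everything and using $\Omega\le C\eps^{2/3}$,
\[
\ene{11-q,q}(t)\le C\eps\,t^{C\eps^{1/3}}+C\eps^{2/3}\int_1^t\frac{\ene{11-q,q}(\tau)}{\tau}\,d\tau ,
\]
so a Gronwall-type argument (with the nondecreasing forcing term) yields $\ene{11-q,q}(t)\le C\eps\,t^{C\eps^{1/3}+C\eps^{2/3}}\le C_7\eps\,t^{C_8\eps^{1/3}}$ for $\eps$ small. Only finitely many steps occur, so $C_7$ and $C_8$ can be fixed independently of $q$, and since $\ene{p,q}\le\ene{11-q,q}$ whenever $p\le 11-q$, \eqref{L2est} follows for every admissible $(p,q)$.

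The step I expect to be delicate is making the self-interaction term $\bigl(\Omega\jb{\tau}^{-1}+\eps^{2/3}\tau^{-1}\bigr)\ene{11-q,q}(\tau)$ truly harmless: the coefficient of $\ene{11-q,q}$ inside the integral must be integrable against $d\tau/\tau$ after allowing a factor $\tau^{C\eps^{1/3}}$, which forces one to extract the \emph{full} $O(\tau^{-1})$ decay from $\|\op{J}^\gamma u\|_{W^{8-|\gamma|,\infty}}$, with the $\xi$-derivatives of $A$ affecting only the $\tau^{C\eps^{1/3}}$-growth and never the $\tau^{-1}$. This is exactly why the pointwise bounds \eqref{est_A}, with the weights $\jb{\xi}^{8-|\gamma|}$ for $|\gamma|\le 3$, had to be secured beforehand, and why the induction must be organized by the number of $\op{J}$'s rather than performed in one stroke: were $\ene{11-q,q}$ itself allowed to reappear in the $|\gamma|\ge 1$ part of the $G$-term with a $\tau^{C\eps^{1/3}}$-growing coefficient, the Gronwall inequality would no longer close into polynomial growth. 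Verifying the uniformity of $\Omega$ in $(\alpha,\beta)$ and tracking the constants through the five steps are the remaining bookkeeping points, neither of which is conceptually serious.
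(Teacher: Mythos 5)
Your proposal is correct and follows essentially the same route as the paper: rewrite $\pa_x^\alpha\op{J}_{m_j}^\beta u_j$ via \eqref{dJF}, verify $\Omega\le C\eps^{2/3}$ and bound $G_j^{(\alpha,\beta)}$ through \eqref{est_yayakoshii} combined with the $W^{8-|\gamma|,\infty}$ decay coming from \eqref{Linfty} and \eqref{est_A} (the paper's \eqref{rough}), apply Proposition~\ref{lem:smooth3}, and close by induction on $q$ with a Gronwall argument, the $|\gamma|\ge1$ terms being handled by the inductive hypothesis exactly as the paper handles $\ene{p+1,q-1}$. The only differences are cosmetic (reducing to the extremal case $p=11-q$, and spelling out the $\Omega$-verification in slightly more detail).
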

Once this lemma is verified, it is straightforward that we have 
\eqref{L2_est} for $t\in [1,T)$. 
The rest of this subsection is devoted to getting Lemma~\ref{L2base}. 

\begin{proof}
Let $|\alpha|=p$, $|\beta|=q$ and 
$v_j^{(\alpha,\beta)}=\pa_x^\alpha \op{J}_{m_j}^\beta u_j$. 
Remember that $v_j^{(\alpha,\beta)}$ satisfies \eqref{dJF}.
From the argument in \S~\ref{sec:A} and Lemma~\ref{Linftylem}, 
we already know that 
\begin{equation}
\sum_{j\in I_N}\|\op{J}_{m_j}^\gamma(t) u_j(t,\cdot)\|_{W^{8-|\gamma|,\infty}}
\leq  
\frac{C \eps^{2/3}}{t^{1-|\gamma|C\eps^{1/3}}} 
\label{rough}
\end{equation}
for $t\in [1,T)$, $|\gamma|\le 3$. 
By virtue of \eqref{est_yayakoshii} and \eqref{rough}, we have
\[
\sum_{j\in I_N}\|G_j^{(\alpha,\beta)}(t,\cdot)\|_{L^2}
\leq 
C\eps^{2/3}\left(
\frac{\ene{p,q}(t)}{t}
+
\frac{\ene{p+1,q-1}(t)}{t^{1-C\eps^{1/3}}}
\right)
\]
and 
\begin{align*}
\sum_{|\beta'|\leq2}\sum_{l\in \sh{I}_N}\sum_{|\gamma|\leq 1}
\jb{t}^{-|\beta'|+1}
\|
  \op{J}_{\tilde{m}_l}^{\beta'} \pa_x^{\gamma} \sh{u}_{l} 
\|_{W^{2-|\beta'|,\infty}}
\leq 
C\eps^{2/3} \sum_{|\beta'|\leq 2}
t^{-(1-C\eps^{1/3})|\beta'|}
\leq 
C\eps^{2/3}. 
\end{align*}
Therefore 
we can adapt Proposition~\ref{lem:smooth3} to obtain
\begin{align}
\ene{p,q}(t)
\leq 
C\eps
+
C\eps^{2/3}\int_1^t \left(
 \dfrac{\ene{p,q}(\tau)}{\tau}
 + 
\frac{\ene{p+1,q-1}(\tau)}{\tau^{1-C\eps^{1/3}}}\right)\, d\tau.
\label{ene_induc}
\end{align}
Now we shall argue by induction on $q$. 
First we consider the case of $q=0$. By \eqref{ene_induc} with $q=0$, we have 
\[
\ene{p,0}(t)
\leq 
C\eps+ C\eps^{2/3} \int_1^t 
 \dfrac{\ene{p,0}(\tau)}{\tau}\, d\tau.
\]
Hence the Gronwall lemma yields \eqref{L2est} with $q=0$.
Next we assume that \eqref{L2est} is valid for some $0\le q\le 4$. Then 
it follows from the estimate \eqref{ene_induc} with $q$ replaced by $q+1$ 
that 
\[
\ene{p,q+1}(t)\leq 
C\eps+C\eps^{4/3} t^{C\eps^{1/3}}+
C\eps^{2/3} \int_1^t 
 \dfrac{\ene{p,q+1}(\tau)}{\tau}\, d\tau.
\]
Therefore the Gronwall lemma again yields \eqref{L2est} 
with $q$ replaced by $q+1$.
\end{proof}

\appendix \section{Appendix}\label{sec:App}

This section is devoted to the proof of Lemma \ref{R4} and 
the inequality \eqref{est_yayakoshii}.

\subsection{Proof of the Lemma~\ref{R4}}\label{sec:R4}
To make the argument clear, we focus on the case where 
$F_1=(\cc{\pa_{x_1}u_2})(\pa_{x_1}u_3)$ with $m_1+m_2=m_3$. 
General case can be shown in the same way. 
In what follows, we write 
$A_j^{(\alpha)}(t,\xi)=(i m_j \xi)^\alpha A_j(t,\xi)$ 
for $\alpha\in\Z_+^2$. 
Note that we have
\[
\pa_{x}^\alpha u_j
=
\op{U}_{m_j}\op{F}^{-1}_{m_j}A_{j}^{(\alpha)}
=
\op{M}_{m_j}\op{D}\op{W}_{m_j}A^{(\alpha)}_j,
\qquad
\cc{\pa_{x}^\alpha u_j}=\op{M}_{-m_j}\op{D}\op{W}_{-m_j}\cc{A^{(\alpha)}_j}.
\]
We also put $\iota=(1,0)\in \Z_+^2$ so that 
$p_1(\xi;A)=A_2^{(\iota)}\cc{A_3^{(\iota)}}$.

Let us begin with the simplest case $k=|\gamma|=0$. 
By the factrization of $\op{U}_m$ and the relation $m_1=-m_2+m_3$, we have
\begin{align*}
\op{F}_{m_1} \op{U}^{-1}_{m_1} F_1
&=
\op{F}_{m_1} \op{U}^{-1}_{m_1}
\bigl[\cc{(\pa_x^{\iota}u_2})({\pa_x^{\iota}u_3)} \bigr]
\\
&=
\op{W}_{m_1}^{-1} \op{D}^{-1}\op{M}^{-1}_{m_1}
\Bigl[
\bigl( \op{M}_{-m_2}\op{D}\op{W}_{-m_2}\cc{A_2^{(\iota)}} \bigr)
\bigl( \op{M}_{m_3}\op{D}\op{W}_{m_3}{A_3^{(\iota)}} \bigr)
\Bigr]\\
&=
\dfrac{1}{t}\op{W}_{m_1}^{-1} 
\Bigl[
\bigl( \op{W}_{-m_2}\cc{A_2^{(\iota)}} \bigr)
\bigl( \op{W}_{m_3}A_3^{(\iota)} \bigr)
\Bigr].
\end{align*}
Hence $R_1$ can be rewritten as
\begin{align*}
R_1
=&
\frac{1}{t}\left( \op{W}_{m_1}^{-1}
 \Bigl[ 
  \bigl( \op{W}_{-m_2}\cc{A_{2}^{(\iota)}} \bigr) 
  \bigl( \op{W}_{m_3}{A_{3}^{(\iota)}} \bigr)
 \Bigr]
 - 
 \cc{A_{2}^{(\iota)}} {A_{3}^{(\iota)}}
\right) \\
=&
\frac{1}{t}\left(\op{W}_{m_1}^{-1}-1\right)
\left[
\bigl( \op{W}_{-m_2}\cc{A_{2}^{(\iota)}} \bigr)
 \bigl( \op{W}_{m_3}{A_{3}^{(\iota)}} \bigr) 
\right]\\
&
+
\frac{1}{t}
\bigl\{ 
 \left(\op{W}_{-m_2}-1\right)\cc{A_{2}^{(\iota)}}
\bigr\}
(\op{W}_{m_3}A_{3}^{(\iota)}) 
+
\frac{1}{t}
\cc{A_{2}^{(\iota)}} 
\bigl\{
  \left(\op{W}_{m_3}-1\right) {A_{3}^{(\iota)}} 
\bigr\}.
\end{align*}
Therefore  Lemmas~\ref{W} and \ref{lem3} give us
\begin{align*} 
\|R_1\|_{L^{\infty}} 
&\leq 
\frac{1}{t}\cdot
Ct^{-1/2}\|A_2^{(\iota)}\|_{H^{2}}\|A_3^{(\iota)}\|_{H^{2}}\\
&\leq 
\frac{C}{t^{3/2}} \left(\sum_{j \in \{2,\,3\}}
\|A_j\|_{H^{2,1}}\right)^2\\
&\leq 
\frac{C}{t^{3/2}} \left(\sum_{j \in \{2,\,3\}}
\sum_{|\beta|\leq2}\|\op{J}^{\beta}_{m_j}u_j\|_{H^{1}}\right)^2.
\end{align*}
Next we consider the case of $k\ge 1$ and $|\gamma|=0$. 
Because of the relation $m_1=-m_2+m_3$, 
the binomial formula leads to 
\begin{align*}
 (im_1 \xi)^{\alpha} p_1(\xi;A)
 =&
 (-im_2\xi+im_3\xi)^{\alpha}
 A_{2}^{(\iota)}\cc{A_{3}^{(\iota)}}\\
 =&
 \sum_{\alpha'\le \alpha} \binom{\alpha}{\alpha'} 
(-im_2\xi)^{\alpha'}(im_3\xi)^{\alpha-\alpha'}
\cc{A_{2}^{(\iota)}} {A_{3}^{(\iota)}}\\
 =&
  \sum_{\alpha'\le \alpha}\binom{\alpha}{\alpha'} 
\cc{A_{2}^{(\alpha'+\iota)}} {A_{3}^{(\alpha-\alpha'+\iota)}}.
\end{align*}
On the other hand, the Leibniz formula yields 
\begin{align*}
(im_1\xi)^{\alpha} \op{F}_{m_1}\op{U}_{m_1}^{-1} F_1
=& \op{F}_{m_1}\op{U}_{m_1}^{-1}\pa_x^{\alpha}F_1\\
=& 
\sum_{\alpha'\leq \alpha}\binom{\alpha}{\alpha'} 
\op{F}_{m_1}\op{U}_{m_1}^{-1}
\Bigl[ (\cc{\pa_x^{\alpha'+\iota} u_2}) ({\pa_x^{\alpha-\alpha'+\iota}u_3})
\Bigr]\\
=&
\frac{1}{t}\sum_{\alpha'\leq \alpha}\binom{\alpha}{\alpha'} 
\op{W}^{-1}_{m_1}
\left[
(\op{W}_{-m_2}\cc{A_2^{(\alpha'+\iota)}})
(\op{W}_{m_3}{A_3^{(\alpha-\alpha'+\iota)}})
\right].
\end{align*}
Piecing them together, we have
\begin{align}
&(i m_1 \xi)^\alpha R_1 \nonumber\\
&=
\frac{1}{t}\sum_{{\alpha' \leq \alpha}}\binom{\alpha}{\alpha'} 
\left\{ 
\op{W}^{-1}_{m_1}
 \left[
  \bigl( \op{W}_{-m_2}\cc{A_2^{(\alpha'+\iota)}} \bigr)
  \bigl( \op{W}_{m_3}{A_3^{(\alpha-\alpha'+\iota)}} \bigr)
 \right] 
 -
 \cc{A_2^{(\alpha' +\iota)}}{A_3^{(\alpha - \alpha'+\iota)}}
\right\}.
\label{Relation}
\end{align}
Therefore we can see as before that 
\begin{align*}
|R_1(t,\xi)|
&\le
 \frac{C}{\jb{\xi}^k} \sum_{|\alpha|\le k}
  \bigl| (im_1\xi)^{\alpha}R_1\bigr|
\\
&\le 
\frac{C}{t^{3/2}\jb{\xi}^k}
\sum_{|\alpha|\leq k}\sum_{\alpha' \leq \alpha}
\|A_2^{(\alpha'+\iota)}\|_{H^2}\|A_3^{(\alpha-\alpha'+\iota)}\|_{H^2} 
\\
&\leq 
\frac{C}{t^{3/2}\jb{\xi}^k}
\left(\sum_{j \in \{2,\,3\}}
\sum_{|\beta|\leq 2}
 \|A_j\|_{H^{2,k+1}}
\right)^2
\\
&\leq 
\frac{C}{t^{3/2}\jb{\xi}^k}
\left(\sum_{j \in \{2,\,3\}}
\sum_{|\beta|\leq 2}
\|\op{J}^{\beta}_{m_j}u_j\|_{H^{k+1}}\right)^2. 
\end{align*}
Finally we consider the case of $k\ge 1$ and $|\gamma|\ge 1$. 
From \eqref{Relation} it follows that
\begin{align*}
&\pa_{\xi}^{\beta}\bigl((i m_1\xi)^{\alpha}R_1\bigr) \nonumber\\
&=
\frac{1}{t}\sumc_{{\alpha' \leq \alpha}}
\pa_{\xi}^{\beta}\left\{ 
\op{W}^{-1}_{m_1}
 \left[
  \bigl( \op{W}_{-m_2}\cc{A_2^{(\alpha'+\iota)}} \bigr)
  \bigl( \op{W}_{m_3}{A_3^{(\alpha-\alpha'+\iota)}} \bigr)
 \right] 
 -
 \cc{A_2^{(\alpha' +\iota)}}{A_3^{(\alpha - \alpha'+\iota)}}
\right\}
\nonumber\\
&= 
\frac{1}{t} 
\sumc_{\substack{ \alpha' \leq \alpha \\ \beta' \leq \beta}}
\biggl\{ 
\op{W}^{-1}_{m_1}
 \left[
 \bigl( \op{W}_{-m_2}\cc{\pa_{\xi}^{\beta-\beta'}A_2^{(\alpha'+\iota)}} \bigr)
 \bigl( \op{W}_{m_3}{\pa_{\xi}^{\beta'}A_3^{(\alpha-\alpha'+\iota)}} \bigr) 
\right] 
-
 (\cc{\pa_{\xi}^{\beta-\beta'}A_2^{(\alpha' +\iota)}})
 ({\pa_{\xi}^{\beta'}A_3^{(\alpha - \alpha'+\iota)}})
\biggr\},
\end{align*}
whence
\begin{align*}
|\pa_\xi^\gamma R_1(t,\xi)|
&\leq 
\frac{C}{\jb{\xi}^k} 
\sum_{\beta \leq \gamma} \sum_{|\alpha|\leq k}
|\pa_{\xi}^{\beta}\left( (i m_1 \xi)^\alpha R_1 \right)|\\
&\le 
\frac{C}{t^{3/2}\jb{\xi}^k}
\sum_{\beta \leq \gamma} \sum_{|\alpha|\leq k}
\sum_{\substack{ \alpha' \leq \alpha \\ \beta' \leq \beta}}
\|\pa_{\xi}^{(\beta-\beta')}A_2^{(\alpha'+\iota)}\|_{H^2}
\|\pa_{\xi}^{\beta'}A_3^{(\alpha-\alpha'+\iota)}\|_{H^2} 
\\
&\leq 
\frac{C}{t^{3/2}\jb{\xi}^k}
\left(
\sum_{j \in \{2,\,3\}}
\sum_{|\beta|\leq 2}
 \|A_j\|_{H^{|\gamma|+2,k+1}}
\right)^2
\\
&\leq 
\frac{C}{t^{3/2}\jb{\xi}^k}
\left(\sum_{j \in \{2,\,3\}}
\sum_{|\beta|\leq |\gamma|+2}
\|\op{J}^{\beta}_{m_j}u_j\|_{H^{k+1}}\right)^2. 
\end{align*}
as desired.\qed

\subsection{Derivation of \eqref{est_yayakoshii} }\label{sec:A2}
Let $|\alpha|=p$, $|\beta|=q$. Remember that we assume 
$0\le p+q\le 11$ and $q\le 5$. We may also assume 
$|\rho|\le|\rho'|$  in \eqref{G_key} without loss of generality.
We will divide the argument into three cases.

\noindent\underline{{\bf (i)}\ $q\le 3$:}\ 
Noting the relations
\[
\min\bigl\{|\sigma|,\, |\sigma'|\bigr\}\leq \left[\frac{p+2}{2}\right]\leq \left[\frac{13-q}{2}\right]\leq 8-q,
\]
we deduce from \eqref{G_key} that 
\begin{align*}
 \|G_j^{(\alpha,\beta)}\|_{L^2}
 \le &
 C \sum_{k, l\in I_N}
 \left\{ \| u_k\|_{W^{8,\infty}}
 \|\op{J}_{m_l}^{\beta} u_l\|_{H^{p}}+
 \sum_{1\le |\gamma|\le q}
 \left(
  \|\op{J}_{m_k}^{\gamma} u_k\|_{W^{8-|\gamma|,\infty}}
  \sum_{|\gamma'|\le q-|\gamma|} \|\op{J}_{m_l}^{\gamma'} u_l\|_{H^{p+1}}
 \right)
\right\}\\
 \le &
  C \sum_{|\gamma|\le 3} 
  \sum_{k\in I_N}\|\op{J}_{m_k}^{\gamma} u_k\|_{W^{8-|\gamma|,\infty}}
 \ene{p+\min\{1,\,|\gamma|\}, q-|\gamma|}. 
\end{align*}

\noindent\underline{{\bf (ii)}\ $q=4$:}\ 
First we consider the terms of $(\rho,\rho')=(0,\beta)$ in \eqref{G_key}. 
We use the relations $|\sigma|\le p+1\le 12-q=8$ and $|\sigma'|\le p$ 
as follows:
\begin{align*}
\sumc_{
  \substack{
                  |\sigma|+|\sigma'| \le p+2\\ 
                     \max\bigl\{ 0+|\sigma|,\,  4+|\sigma'|  \bigr\}\leq p+4\\ 
                        \max\bigl\{ |\sigma|,\,  |\sigma'|  \bigr\}\leq p+1
               }
           }  
 \bigl\|
  \sh{(\pa_x^{\sigma} u_k)}\sh{(\pa_x^{\sigma'}\op{J}_{m_l}^{\beta} u_l)}
 \bigr\|_{L^2}
 \le 
 C \|u_k\|_{W^{8,\infty}} \|\op{J}_{m_l}^{\beta} u_l\|_{H^{p}}
 \le 
 C \|u_k\|_{W^{8,\infty}}  \ene{p,4}.
\end{align*}
As for the other terms, it follows from the relations  
\begin{align}
\min\bigl\{ |\sigma|,\,  |\sigma'|  \bigr\}\leq \left[\frac{p+2}{2}\right]\leq \left[\frac{13-q}{2}\right]=4\leq 8-\max\bigl\{|\rho|,\, |\rho'|\bigr\}
\label{index_sigma}
\end{align}
and $|\rho|\le|\rho'|\le 3$ that
\begin{align*}
\sum_{k,l\in \sh{I_N}}
\sum_{  \substack{
                   |\rho|+|\rho'| \le 4\\ 
                    |\rho|\le|\rho'|\le3\\
                           }
          } 
 \sumc_{  \substack{
                  |\sigma|+|\sigma'| \le p+2\\ 
           \max\bigl\{ |\rho|+|\sigma|,\,  |\rho'|+|\sigma'|  \bigr\}\leq p+4\\ 
                        \max\bigl\{ |\sigma|,\,  |\sigma'|  \bigr\}\leq p+1
                           }
          }&
 \bigl\|
  \sh{(\pa_x^{\sigma}\op{J}_{m_k}^{\rho} u_k)}
  \sh{(\pa_x^{\sigma'}\op{J}_{m_l}^{\rho'} u_l)}
 \bigr\|_{L^2}\\
 \le &
 C\sum_{|\gamma|\le 3}\sum_{k,l\in I_N}
 \left(
  \|\op{J}_{m_k}^{\gamma} u_k\|_{W^{8-|\gamma|,\infty}}
  \sum_{|\gamma'|\le 4-|\gamma|} \|\op{J}_{m_l}^{\gamma'} u_l\|_{H^{p+1}}
 \right)\\
 \le &
  C \sum_{|\gamma|\le 3}
  \sum_{k\in I_N} 
  \|\op{J}_{m_k}^{\gamma} u_k\|_{W^{8-|\gamma|,\infty}}
 \ene{p+1, 4-|\gamma|}. 
\end{align*}
Summing up, we obtain the desired inequality for $q=4$. 


\noindent\underline{{\bf (iii)}\ $q=5$:}\ 
Since $p+1\le 12-q=7$, we see as before that 
\begin{align*}
 \sumc_{  \substack{
                  |\sigma|+|\sigma'| \le p+2\\ 
                     \max\bigl\{ 0+|\sigma|,\,  5+|\sigma'|  \bigr\}\leq p+5\\ 
                        \max\bigl\{ |\sigma|,\,  |\sigma'|  \bigr\}\leq p+1
               }} 
 \bigl\|
  \sh{(\pa_x^{\sigma} u_k)}\sh{(\pa_x^{\sigma'}\op{J}_{m_l}^{\beta} u_l)}
 \bigr\|_{L^2}
 \le 
 C \|u_k\|_{W^{7,\infty}}  \ene{p,5}
\end{align*}
and
\begin{align*}
 \sum_{k,l\in\sh{I_N}}
 \sum_{\substack{ |\rho|\le 1\\ |\rho'|=4}}
 \sumc_{  \substack{
                  |\sigma|+|\sigma'| \le p+2\\ 
                  \max\bigl\{|\rho|+|\sigma|,\,  4+|\sigma'|  \bigr\}\leq p+5\\ 
                        \max\bigl\{ |\sigma|,\,  |\sigma'|  \bigr\}\leq p+1
               }} &
 \bigl\|
  \sh{(\pa_x^{\sigma}\op{J}_{m_k}^{\rho}  u_k)}
  \sh{(\pa_x^{\sigma'}\op{J}_{m_l}^{\rho'} u_l)}
 \bigr\|_{L^2}\\
 \le &
 C \sum_{|\gamma|\le1}\sum_{k\in I_N}
\|\op{J}_{m_k}^{\gamma}u_k\|_{W^{8-|\gamma|,\infty}} \ene{p+1,5-|\gamma|}.
\end{align*}
As for the other terms, 
we deduce from \eqref{index_sigma} that 
\begin{align*}
 \sum_{k,l\in\sh{I_N}}
 \sum_{\substack{ |\rho|\le 2\\ |\rho'|\le 3}}
 \sumc_{  \substack{
                  |\sigma|+|\sigma'| \le p+2\\ 
           \max\bigl\{ |\rho|+|\sigma|,\, |\rho'|+|\sigma'|  \bigr\}\leq p+5\\ 
                        \max\bigl\{ |\sigma|,\,  |\sigma'|  \bigr\}\leq p+1 
               }} &\
 \bigl\|
  \sh{(\pa_x^{\sigma}\op{J}_{m_k}^{\rho}  u_k)}
  \sh{(\pa_x^{\sigma'}\op{J}_{m_l}^{\rho'} u_l)}
 \bigr\|_{L^2}\\
 \le &
 C \sum_{|\gamma|\le 3}\sum_{k\in I_N}
\|\op{J}_{m_k}^{\gamma}u_k\|_{W^{8-|\gamma|,\infty}} \ene{p+1,5-|\gamma|}.
\end{align*}
Piecing them all together, we arrive at the desired inequality for $q=5$. 

\qed

\medskip
\subsection*{Acknowledgments}
The authors thank Professor Soichiro Katayama for his useful 
conversations on this subject. 
The work of H.~S. is supported by Grant-in-Aid for Scientific Research (C) 
(No.~17K05322), JSPS.



\begin{thebibliography}{99}


\bibitem{BHN} F.~Bernal-V\'{\i}lchis, N.~Hayashi and P.~I.~Naumkin, 
{\em 
Quadratic derivative nonlinear Schr\"odinger equations in two space 
dimensions,} 
NoDEA Nonlinear Differential Equations Appl., {\bf 18} (2011), no.3, 329--355.



\bibitem {Chr} D.~Christodoulou, 
{\em 
Global solutions of nonlinear hyperbolic equations for small initial data,} 
Comm. Pure Appl. Math., {\bf 39} (1986), no.2, 267--282.


\bibitem{ColCol}M.~Colin and T.~Colin,
{\em On a quasilinear Zakharov system describing laser-plasma interactions,}
Differential Integral Equations, {\bf 17} (2004), no.3--4, 297--330.


\bibitem{Del}J.-M.~Delort, 
{\em 
 Global solutions for small nonlinear long range perturbations of two 
 dimensional Schr\"odinger equations, 
}
 M\'em. Soc. Math. Fr., no.91 (2002).


\bibitem{DFX} 
J.-M.~Delort, D.~Fang and R.~Xue,
{\em 
Global existence of small solutions for quadratic quasilinear 
 Klein-Gordon systems in two space dimensions,} 
J. Funct. Anal. {\bf 211} (2004), 288--323.



\bibitem{GMS3}
P.~Germain, N.~Masmoudi and J.~Shatah, 
{\em 
Global solutions for 3D quadratic Schr\"odinger equations,} 
Int. Math. Res. Not. IMRN 2009, no.3, 414--432.



\bibitem{GMS}
P.~Germain, N.~Masmoudi and J.~Shatah, 
{\em 
 Global solutions for 2D quadratic Schr\"odinger equations,} 
 J. Math. Pures Appl., {\bf 97} (2012), no.5, 505--543.



\bibitem{HLN1}N.~Hayashi, C.~Li and P.~I.~Naumkin,
{\em 
On a system of nonlinear Schr\"odinger equations in 2d,
} 
Differential Integral Equations, {\bf 24} (2011), no.5--6, 417--434.



\bibitem{HLN2}N.~Hayashi, C.~Li  and P.I.~Naumkin,
{\em 
Modified wave operator for a system of nonlinear Schr\"odinger equations in 
2d,} 
Comm. Partial Differential Equations, {\bf 37} (2012), no.6, 947--968.


\bibitem{HLO} N.~Hayashi, C.~Li and T.~Ozawa,
{\em 
Small data scattering for a system of nonlinear Schr\"odinger equations,
} 
Differ. Equ. Appl., {\bf 3} (2011), no.3, 415--426.


\bibitem{HN}N.~Hayashi and P.I.~Naumkin,
{\em 
Global existence for two dimensional quadratic derivative nonlinear 
Schr\"odinger equations,} 
Comm. Partial Differential Equations, {\bf 37} (2012), no.4, 732--752.




\bibitem{Hira} H.~Hirayama, 
{\em Well-posedness and scattering for a system of quadratic derivative nonlinear Schr\"odinger equations with low regularity initial data,} 
Commun. Pure Appl. Anal., {\bf 13} (2014), 1563--1591. 



\bibitem{Hor} L.~H\"ormander, 
{\em Lectures on nonlinear hyperbolic differential equations,} 
Math{\'e}matiques \& applications, Springer-Verlag, Berlin, {\bf 26} (1997). 


\bibitem{IKS}
M.~Ikeda, S.~Katayama and H.~Sunagawa,
{\em 
Null structure in a system of quadratic derivative nonlinear Schr\"odinger 
equations,} 
Ann. H. Poincar\'e {\bf 16} (2015), no.2, 535--567. 


\bibitem{IKO}M.~Ikeda, N.~Kishimoto and M.~Okamoto, 
{\em 
Well-posedness for a quadratic derivative nonlinear Schr\"odinger 
system at the critical regularity,} 
J. Funct. Anal., {\bf 271} (2016), no.4, 747--798. 


\bibitem{IW} M.~Ikeda and Y.~Wakasugi, 
{\em Small-data blow-up of $L^2$-solution for the nonlinear Schr\"odinger 
equation without gauge invariance,} 
Differential Integral Equations, {\bf 26} (2013), no.11--12, 1275--1285.


\bibitem{Kata} S.~Katayama, 
{\em 
Global solutions and the asymptotic behavior for nonlinear wave equations with 
small initial data,} 
MSJ Memoirs, {\bf 36}. Mathematical Society of Japan, 2017. 

\bibitem{KLS}
S.~Katayama, C.~Li and H.~Sunagawa,
{\em 
A remark on decay rates of solutions for a system of quadratic nonlinear 
Schr\"odinger equations in 2D,}
Differential Integral Equations {\bf 27} (2014), no.3--4, 301--312. 


\bibitem{KMatoS} S.~Katayama, T.~Matoba and H.~Sunagawa, 
{\em 
Semilinear hyperbolic systems violating the null condition,} 
Math. Ann., {\bf 361} (2015), no.1--2, 275--312. 


\bibitem{KMatsS} S.~Katayama, A.~Matsumura and H.~Sunagawa, 
{\em 
Energy decay for systems of semilinear wave equations with dissipative 
structure in two space dimensions,} 
NoDEA Nonlinear Differential Equations Appl., {\bf  22} (2015), no.4, 
601--628. 



\bibitem{KOS} S.~Katayama, T.~Ozawa and H.~Sunagawa, 
{\em 
A note on the null condition for quadratic nonlinear
Klein-Gordon systems in two space dimensions,}  
Comm. Pure Appl. Math., {\bf 65} (2012), no.9, 1285--1302. 


\bibitem{KawaSuna2} Y.~Kawahara and H.~Sunagawa, 
{\em Global small amplitude solutions for two-dimensional nonlinear 
Klein-Gordon systems in the presence of mass resonance,} 
J. Differential Equations, {\bf 251} (2011), no.9, 2549--2567.

\bibitem{Kim} D.~Kim, 
{\em A note on decay rates of solutions to a system of cubic nonlinear SSchr\"odinger equations in one space dimension,} 
Asymptot. Anal., {\bf 98} (2016), no.1--2, 79--90.


\bibitem {Kla} S.~Klainerman, 
{\it 
The null condition and global existence to nonlinear wave equations,} 
in ``Nonlinear Systems of Partial Differential Equations in Applied Mathematics, Part 1," 
Lectures in Applied Math., vol. 23, pp. 293--326. AMS, Providence, RI (1986).

\bibitem{Li} C.~Li, 
{\em Decay of solutions for a system of nonlinear Schr\"odinger equations in 
2D,} 
Discrete Contin. Dyn. Syst., {\bf 32} (2012), no.12, 4265--4285.


\bibitem{LS1} C.~Li and H.~Sunagawa, 
{\em On Schr\"odinger systems with cubic 
dissipative nonlinearities of derivative type,} 
Nonlinearity, {\bf 29} (2016), no.5, 1537--1563; 
Corrigendum, ibid., no.12, C1--C2.


\bibitem{LS2} C.~Li and H.~Sunagawa, 
{\em Remarks on derivative nonlinear Schr\"odinger systems with 
multiple masses,} 
to appear in the proceedings of the conference 
``Asymptotic Analysis for Nonlinear Dispersive and Wave Equations" 
held at Osaka in September 2014 
(available at {\tt arXiv:1603.04966}).



\bibitem{Miz} S.~Mizohata, 
{\em On the Cauchy problem,} Academic Press, 1985.




\bibitem{OgaUri} T.~Ogawa and K.~Uriya, 
{\em 
Final state problem for a quadratic nonlinear Schr\"odinger system in 
two space dimensions with mass resonance,} 
J. Differential equations {\bf 258} (2015), 483--503.



\bibitem{OzaSuna} T.~Ozawa and H.~Sunagawa, 
{\em 
Small data blow-up for a system of nonlinear Schr\"odinger equations,} 
J. Math. Anal. Appl., {\bf 399} (2013), no.1, 147--155. 



\bibitem{Uri} K.~Uriya, 
{\em Final state problem for a system of nonlinear Schr\"odinger equations with three wave interaction,} 
J. Evol. Equ., {\bf 16} (2016), no. 1, 173--191.



\end{thebibliography}
\end{document}